\def\oversortoftilde#1{\mathop{\vbox{\m@th\ialign{##\crcr\noalign{\kern3\p@}%
      \sortoftildefill\crcr\noalign{\kern3\p@\nointerlineskip}%
      $\hfil\displaystyle{#1}\hfil$\crcr}}}\limits}
\def\sortoftildefill{$\m@th \setbox\z@\hbox{$\braceld$}%
  \braceld\leaders\vrule \@height\ht\z@ \@depth\z@\hfill\braceru$}
\newtheorem{thm}{Theorem}[section]
\newtheorem*{maintheorem*}{Main Theorem}
\newtheorem{defn}[thm]{Definition}
\newtheorem{Ex}[thm]{Example} 
\newtheorem{lemma}[thm]{Lemma} 
\newtheorem{proposition}[thm]{Proposition} 
\newtheorem{remark}[thm]{Remark} 
\newtheorem{Cor}[thm]{Corollary}
\newtheorem{ques}[thm]{Question}
\DeclareMathOperator{\id}{id}
\DeclareMathOperator{\Hom}{Hom}
\DeclareMathOperator{\comod}{comod}
\DeclareMathOperator{\Sh}{Sh}
\newcommand{\uaut}{\underline{\rm aut}}
\DeclareMathOperator{\grmod}{grmod}
\newcommand{\kk}{\Bbbk}
\newcommand{\mc}{\mathcal}
\newcommand{\uend}{\underline{\rm end}}
\def\vps{\varepsilon}
\numberwithin{equation}{section}
\title{Quantum-symmetric equivalence is a graded Morita invariant}
\author[Huang]{Hongdi Huang}
\address{(Huang) Department of Mathematics, Shanghai University, Shanghai, 200444, China}
\email{hdhuang@shu.edu.cn}
\author[Nguyen]{Van C. Nguyen}
\address{(Nguyen) Department of Mathematics, United States Naval Academy, Annapolis, MD 21402, U.S.A.}
\email{vnguyen@usna.edu}
\author[Vashaw]{Kent B. Vashaw}
\address{(Vashaw) Department of Mathematics,
University of California Los Angeles,
Los Angeles, CA 90095, U.S.A.}
\email{kentvashaw@math.ucla.edu}
\author[Veerapen]{Padmini Veerapen}
\address{(Veerapen) Department of Mathematics, Tennessee Tech University, Cookeville, TN 38505, U.S.A.}
\email{pveerapen@tntech.edu}
\author[Wang]{Xingting Wang}
\address{(Wang) Department of Mathematics, Louisiana State University, Baton Rouge, Louisiana 70803, USA}
\email{xingtingwang@math.lsu.edu}
\date\today
\begin{document}

%%%%%%%%%%%%%%%%%%%%%%%%%%%%%%%%%%%
\begin{abstract}
    We show that if two $m$-homogeneous algebras have Morita equivalent graded module categories, then they are quantum-symmetrically equivalent, that is, there is a monoidal equivalence between the categories of comodules for their associated universal quantum groups (in the sense of Manin) which sends one algebra to the other. As a consequence, any Zhang twist of an $m$-homogeneous algebra is a 2-cocycle twist by some 2-cocycle from its Manin's universal quantum group. 
\end{abstract}
%%%%%%%%%%%%%%%%%%%%%%%%%%%%%%%%%%%

\maketitle

%\tableofcontents   

%%%%%%%%%%%%%%%%%%%%%%%%%%%%%%%%%%%
%%%%%%%%%%%%%%%%%%%%%%%%%%%%%%%%%%%
\section{Introduction}

Symmetry has been a central topic of study in mathematics for thousands of years. Symmetries of classical objects form a group; however, some quantum objects exhibit properties that cannot be captured by classical symmetries. This motivates the study of their quantum symmetries, which are better described by group-like objects known as quantum groups, whose representation categories provide examples of tensor categories (see e.g., \cite{EGNO}).

In his seminal work \cite{Manin2018}, Manin restored the ``broken symmetry" of a quantized algebra by imposing some non-trivial relations on the coordinate ring of the general linear group. This led to the introduction of the now-called ``Manin's universal quantum group".

\begin{defn}\cite[Lemma 6.6]{Manin2018}
\label{def:ManinU} 
Let $A$ be any $\mathbb Z$-graded locally finite $\kk$-algebra. The \emph{right universal bialgebra $\underline{\rm end}^r(A)$ associated to $A$} is the bialgebra that right coacts on $A$ preserving the grading of $A$ via $\rho: A\to A \otimes \underline{\rm end}^r(A)$ satisfying the following universal property: if $B$ is any bialgebra that right coacts on $A$ preserving the grading of $A$ via $\tau: A\to A \otimes B$, then there is a unique bialgebra map $f: \underline{\rm end}^r (A)\to B$ such that the diagram 
\begin{align}
\label{def:aut}
\xymatrix{
A\ar[r]^-{\rho}\ar[dr]_-{\tau} & A \otimes \underline{\rm end}^r(A) \ar[d]^-{\id \otimes f} \\
& A \otimes B
}
\end{align}
commutes. By replacing ``bialgebra" with ``Hopf algebra" in the above definition, we define the right universal quantum group $\uaut^r(A)$ to be the universal Hopf algebra right coacting on $A$.
\end{defn} 

\begin{remark}
    One can also define a left-coacting version of Manin's universal quantum groups. All results in this paper can be proven analogously in that context.
\end{remark}

There is a current surge of interest in the study of universal quantum symmetries, see e.g., \cite{Agore2021,AGV,Chirvasitu-Walton-Wang2019,HNUVVW21,HNUVVW2,HNUVVW3, HWWW,vdb2017,WaltonWang2016}. Notable results by Raedschelders and Van den Bergh in \cite{vdb2017} showed that Manin's universal quantum groups of Koszul Artin-Schelter (AS) regular algebras with the same global dimensions have monoidally equivalent comodule categories. In \cite{HNUVVW3}, the authors together with Ure introduced quantum-symmetric equivalence to systematically study such algebras. 

\begin{defn}
\label{DefnA}\cite[Definition A]{HNUVVW3}
Let $A$ and $B$ be two connected graded algebras finitely generated in degree one. We say $A$ and $B$ are \emph{quantum-symmetrically equivalent} if there is a monoidal equivalence between the comodule categories of their associated universal quantum groups 
\[
\comod(\uaut^r(A))~\overset{\otimes}{\cong}~\comod(\uaut^r(B))
\]
in the sense of Manin, where this equivalence sends $A$ to $B$ as comodule algebras. We denote the quantum-symmetric equivalence class of $A$ by $QS(A)$, which consists of all connected graded algebras that are quantum-symmetrically equivalent to $A$.
\end{defn}

For any connected graded algebra $A$ finitely generated in degree one, we aim to determine its $QS(A)$. The main findings in \cite{HNUVVW3} demonstrate that all graded algebras in $QS(A)$ have various homological properties in common with $A$, and that the family of Koszul AS-regular algebras of a fixed global dimension forms a single quantum-symmetric equivalence class.

The purpose of this paper is to explore additional properties of $A$ that may help to identify characteristics of $QS(A)$ beyond the numerical and homological invariants explored in \cite{HNUVVW3}. It is important to note that these numerical and homological invariants of $A$ are entirely determined by its graded module category $\grmod(A)$. Moreover, in \cite{Zhang1996}, Zhang fully characterized the graded Morita equivalence between two $\mathbb N$-graded algebras by Zhang twists given by some twisting systems (see \cite{Sierra2011} for a generalization to $\mathbb{Z}$-graded algebras and \cite{LopezWalton} for a generalization to algebras in monoidal categories). Therefore, we pose a natural question: Does $\grmod(A)$ uniquely determine $QS(A)$? Our main finding in this paper answers this question positively in the case of all $m$-homogeneous algebras.

\begin{thm}
    \label{atau-qsequiv}
    For any integer $m \geq 2$, let $A$ and $B$ be two $m$-homogeneous algebras. If $A$ and $B$ are graded Morita equivalent, then they are quantum-symmetrically equivalent. 
\end{thm}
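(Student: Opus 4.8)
The plan is to reduce the statement to the known structure theory in two stages: first, turn graded Morita equivalence into the explicit combinatorial data of a Zhang twist, and second, realize that Zhang twist inside the comodule category of Manin's universal quantum group. For the first stage, I would invoke Zhang's theorem (and its $\mathbb{Z}$-graded refinement) to say that if $A$ and $B$ are graded Morita equivalent $m$-homogeneous algebras, then $B$ is isomorphic to a Zhang twist ${}^\tau\! A$ of $A$ by a twisting system $\tau = \{\tau_n\}_{n \in \mathbb{Z}}$; because $A$ is generated in degree one with relations in degree $m$, the twisting system is determined by the single graded-linear automorphism $\tau_1$ of $A_1$, and one must check that the twisted relations again live in degree $m$ so that $B$ is genuinely $m$-homogeneous. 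This is essentially bookkeeping with Zhang's formulas $a \ast b = \tau_{|b|}(a)\,b$ and should not be the hard part.

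For the second stage, the key point is that $\uend^r(A)$ and $\uaut^r(A)$ depend only on the pair $(A_1, R)$ where $R \subseteq A_1^{\otimes m}$ is the space of relations: concretely, $\uend^r(A)$ is presented as a quotient of the free bialgebra on the matrix coalgebra $\mathcal{O}(\mathrm{end}(A_1))$ by the relations forcing the coaction to preserve $R$. I would then show that twisting $A$ by $\tau_1$ corresponds, on the level of universal coends, to twisting the presentation of $\uend^r(A)$ by a 2-cocycle $\sigma$ built from $\tau_1$; equivalently, the automorphism $\tau_1$ induces a monoidal autoequivalence of $\comod(\uend^r(A))$ (a "lazy" 2-cocycle twist) whose value on the regular comodule algebra $A$ is exactly ${}^\tau\! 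A \cong B$. Passing from the bialgebra $\uend^r$ to the Hopf algebra $\uaut^r$ is compatible with this construction since the universal Hopf envelope is functorial. Concluding, the monoidal equivalence $\comod(\uaut^r(A)) \overset{\otimes}{\cong} \comod(\uaut^r(B))$ sending $A$ to $B$ is precisely a 2-cocycle twist, which also yields the stated corollary in the abstract.

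The main obstacle I anticipate is verifying that the degree-one automorphism $\tau_1$ actually assembles into a \emph{2-cocycle} on $\uend^r(A)$ (or a monoidal structure on the associated autoequivalence) rather than merely an algebra automorphism of the twist — that is, showing the cocycle condition holds and that the twisted comodule algebra structure on $B$ matches the one coming from the graded Morita equivalence. One clean way to handle this is to use the universal property in Definition~\ref{def:ManinU} directly: the Zhang twist ${}^\tau\!A$ carries a canonical $\uend^r(A)$-comodule algebra structure (transported through the isomorphism of underlying graded vector spaces), so there is a bialgebra map $\uend^r({}^\tau\!A) \to \uend^r(A)$; running the same argument with $\tau^{-1}$ gives a map the other way, and checking these are mutually inverse uses that $\tau_1$ is invertible. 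Once the bialgebras (and then Hopf algebras) are identified up to a cocycle twist, the monoidal equivalence of comodule categories is standard, and the fact that it sends $A \mapsto B$ is built into the construction.
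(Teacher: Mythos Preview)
Your two-stage strategy matches the paper's, but there is a genuine gap at the crucial point. You assert that for an $m$-homogeneous algebra ``the twisting system is determined by the single graded-linear automorphism $\tau_1$ of $A_1$.'' This is false for a general twisting system: even on an $m$-homogeneous algebra, a twisting system is determined by the \emph{entire family} $\{\tau_i|_{A_1}\}_{i\in\mathbb Z}$ of linear automorphisms of $A_1$ (see Proposition~\ref{twist-deg-1}), and there is no relation forcing $\tau_i$ to be a power of $\tau_1$. The special case you have in mind---where $\tau_i=\phi^i$ for a single graded automorphism $\phi$---is exactly the ``base case'' already handled in \cite[Theorem~2.3.3]{HNUVVW3}, and the paper explicitly notes that extending to arbitrary twisting systems ``involves significant technical complications.'' As written, your sketch therefore only reproves the known special case; the cocycle you would build ``from $\tau_1$'' cannot see the rest of the family.

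Your proposed ``clean way'' via the universal property also breaks down. The coaction $\rho:A\to A\otimes\uend^r(A)$, transported along the identity of underlying graded vector spaces, is \emph{not} an algebra map for the twisted product on $A^\tau$ (one would need $\rho\circ\tau_{|a|}=\rho$, which fails), so $A^\tau$ is not a $\uend^r(A)$-comodule algebra in any canonical way and you cannot invoke Definition~\ref{def:ManinU} to produce a bialgebra map $\uend^r(A^\tau)\to\uend^r(A)$. Even granting a bialgebra isomorphism, the induced equivalence of comodule categories is restriction along that isomorphism and carries $A$ to $A$ with a modified coaction, not to $A^\tau$; the 2-cocycle is needed precisely to deform the \emph{monoidal} structure so that $A\mapsto A^\tau$ as comodule algebras. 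The paper's route is to build from the full family $\{\tau_i\}$ a dual twisting system $\tau^!$ on $A^!$, assemble $(\tau\bullet\id,\ \id\bullet\tau^!)$ into a \emph{twisting system pair} on $\uend^r(A)=A\bullet A^!$, lift it to $\uaut^r(A)$ via the Hopf envelope, and extract from that pair an explicit right 2-cocycle $\sigma$ with $\uaut^r(A^\tau)\cong\uaut^r(A)^\sigma$ and $A_\sigma\cong A^\tau$. The machinery of Sections~\ref{sect:lintwist}--\ref{sect:2coc} (systems of twisting functionals, twisting system pairs) exists exactly to handle the non-multiplicative dependence on the index $i$ that a general twisting system introduces.
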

In particular, we show that a Zhang twist of an $m$-homogeneous algebra by a twisting system can be realized as a 2-cocycle twist by using its universal quantum group $\uaut^r(A)$. A base case of this result, when the twisting system is formed by the compositions of a single algebra automorphism, was achieved in \cite[Theorem 2.3.3]{HNUVVW3}; the present generalization to arbitrary twisting systems involves significant technical complications and applies in much greater generality (see, e.g., \cite{TVan}).

%%%%%%%%%%%%%%%%%%%%%%%%%%%%
\subsection*{Conventions} 

Throughout, let $\kk$ be a base field with $\otimes$ taken over $\kk$ unless stated otherwise. A $\mathbb Z$-graded algebra $A=\bigoplus_{i\in \mathbb Z} A_i$ is called \emph{connected graded} if $A_i=0$ for $i<0$ and $A_0=\kk$. For any integer $m \geq 2$, an \emph{$m$-homogeneous algebra} is a connected graded algebra $A:=\kk\langle A_1 \rangle/(R)$ finitely generated in degree one, subject to $m$-homogeneous relations $R \subseteq A_1^{\otimes m}$. For any homogeneous element $a\in A$, we denote its degree by $|a|$. We use the Sweedler notation for the coproduct in a coalgebra $B$: for any $h \in B$, $\Delta(h) = \sum h_1 \otimes h_2 \in B \otimes B$. The category of right $B$-comodules is denoted by $\comod(B)$. 

\subsection*{Acknowledgements} 
The authors thank the referee for their careful reading and suggestions to improve the paper. Some of the results in this paper were formulated at a SQuaRE at the American Institute of Mathematics; the authors thank AIM for their hospitality and support. Nguyen was partially supported by NSF grant DMS-2201146. Vashaw was partially supported by NSF Postdoctoral Fellowship DMS-2103272. Veerapen was partially supported by an NSF--AWM Travel Grant.

%%%%%%%%%%%%%%%%%%%%%%%%%%%%%%%%%%%
%%%%%%%%%%%%%%%%%%%%%%%%%%%%%%%%%%%
\section{Lifting twisting systems to universal bialgebras}
\label{sect:genresults}

For any $\mathbb Z$-graded algebra $A$, recall that a \emph{twisting system} of $A$ consists of a collection $\tau:= \{\tau_i: i \in \mathbb Z\}$ of $\mathbb Z$-graded bijective linear maps $\tau_i: A \to A$, satisfying any one of the following equivalent conditions (see \cite[(2.1.1)-(2.1.4)]{Zhang1996}):
\begin{enumerate}
    \item $\tau_i(a \tau_j(b)) = \tau_i(a) \tau_{i+j}(b)$;
    \item $\tau_i(ab)=\tau_i(a) \tau_{i+j} \tau_j^{-1}(b)$;
    \item $\tau_i^{-1}( a \tau_{i+j}(b))= \tau_i^{-1} (a) \tau_j(b)$;
    \item $\tau_i^{-1}(ab) = \tau_i^{-1} (a) \tau_j \tau_{i+j}^{-1}(b)$,
\end{enumerate}
for homogeneous elements $a, b \in A$, where $a$ is of degree $j$ and $b$ is of any degree.
By \cite[Proposition 2.4]{Zhang1996}, we may always assume the following additional two conditions:
\begin{itemize}
\item[(5)] $\tau_i(1)=1$ for any $i\in \mathbb Z$;
\item[(6)] $\tau_0={\rm id}_A$.
\end{itemize}
% When $\tau$ satisfies (1) (or, equivalently (2)), we say that it satisfies the {\emph{twisting systems axioms}}. \sout{and when  $\tau^{-1}$ satisfies (3) or (4) we say it satisfies the {\emph{inverse twisting system axioms}}.}

For any twisting system $\tau$ of $A$, the \emph{right Zhang twist of $A$}, denoted by $A^\tau$, is the graded algebra such that $A^\tau=A$ as graded vector spaces with the twisted product $a \cdot_\tau b=a\tau_j(b)$, for homogeneous elements $a,b \in A$, where $a$ is of degree $j$ and $b$ is of any degree.

For an $m$-homogeneous algebra $A=\kk\langle A_1 \rangle/(R)$, we construct a twisting system of $A$ explicitly as follows. Let $\tau:=\{\tau_i: A_1\to A_1\}_{i \in \mathbb{Z}}$ be a collection of bijective linear maps on degree one (where $\tau_0=\id$) with $\kk$-linear inverses $\tau^{-1}:=\{\tau_i^{-1}: A_1\to A_1\}_{i\in \mathbb Z}$. We extend each $\tau_i$ and $\tau_i^{-1}$ (which we denote as $\tau_i$ and $\widetilde{\tau}_i$, respectively, by abuse of notation) to $\kk\langle A_1\rangle$ inductively on the total degree of the element $ab$ by the rules:  
\begin{equation}
\label{eq:inductive}
\tau_i(1)=\widetilde{\tau_i}(1)=1, \qquad  
\tau_i(ab):= \tau_i(a) \tau_{i+1} \widetilde \tau_1 (b),
\qquad \text{ and } \qquad
\widetilde \tau_i(ab):= \widetilde \tau_i(a)\tau_1 \widetilde\tau_{i+1}(b),  
\end{equation}
for any $a \in A_1$ and $b$ is of any positive degree. In the following result, we use the rules in \eqref{eq:inductive} to define a twisting system of $A$ by proving that $\tau_i$ and $\widetilde \tau_i$ indeed satisfy the twisting system axioms (with $\widetilde \tau_i$ being the inverse of $\tau_i$) if and only if they preserve the relation space $R$ of $A$.

\begin{proposition}
\label{twist-deg-1}
    Let $A=\kk\langle A_1\rangle/(R)$ be an $m$-homogeneous algebra and $\tau_i$ and $\widetilde \tau_i$ be defined as in \eqref{eq:inductive}. If $\tau_i(R) = R$ for all $i \in \mathbb{Z}$, then $\tau_i$ and $\widetilde \tau_i$ are well-defined graded linear maps $A \to A$ that are inverse to each other. Moreover, $\tau=\{\tau_i: i \in \mathbb Z\}$ is a twisting system of $A$.  
\end{proposition}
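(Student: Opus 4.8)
The plan is to establish the two assertions of Proposition~\ref{twist-deg-1} in turn: first that $\tau_i$ and $\widetilde\tau_i$ descend to well-defined, mutually inverse graded linear maps on $A$, and second that the resulting family $\tau = \{\tau_i\}$ satisfies one of the equivalent twisting-system axioms, say condition (2) of \cite{Zhang1996}.

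\textbf{Step 1: $\widetilde\tau_i$ is a two-sided inverse of $\tau_i$ on $\kk\langle A_1\rangle$.}
I would argue by induction on the total degree that $\widetilde\tau_i\circ\tau_i = \tau_i\circ\widetilde\tau_i = \id$ on $\kk\langle A_1\rangle$. The degree-$0$ and degree-$1$ cases are immediate from \eqref{eq:inductive} and the assumption that $\tau_i$ and $\widetilde\tau_i$ are $\kk$-linear inverses on $A_1$. For the inductive step, take $a\in A_1$ and $b$ of positive degree and compute, using \eqref{eq:inductive} twice,
\[
\widetilde\tau_i\bigl(\tau_i(ab)\bigr) = \widetilde\tau_i\bigl(\tau_i(a)\,\tau_{i+1}\widetilde\tau_1(b)\bigr)
= \widetilde\tau_i\bigl(\tau_i(a)\bigr)\,\tau_1\widetilde\tau_{i+1}\bigl(\tau_{i+1}\widetilde\tau_1(b)\bigr)
= a\,\widetilde\tau_1\widetilde\tau_1^{-1}(b)\cdot(\text{via IH}),
\]
which collapses to $ab$ once one feeds the inductive hypothesis into both tensor factors; here $\tau_1\widetilde\tau_1 = \id$ on $A_1$ by hypothesis, and $\widetilde\tau_{i+1}\tau_{i+1} = \id$ on lower degree by the inductive hypothesis. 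The reverse composition is symmetric. This shows $\tau_i$ is a graded bijection on $\kk\langle A_1\rangle$ with inverse $\widetilde\tau_i$.

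\textbf{Step 2: $\tau_i$ preserves the two-sided ideal $(R)$ and hence descends to $A$.}
Since $\tau_i(R) = R$ by hypothesis and $\widetilde\tau_i(R) = R$ (apply $\widetilde\tau_i$ to $\tau_i(R)=R$ and use Step 1), it suffices to show that the rules \eqref{eq:inductive} are compatible enough with multiplication that $\tau_i\bigl((R)\bigr)\subseteq (R)$; combined with the same statement for $\widetilde\tau_i$ and Step 1, this forces $\tau_i((R)) = (R)$. The key sub-claim is a ``product rule across a degree-$m$ block'': for $x\in\kk\langle A_1\rangle$ of degree $j$ and $y$ of arbitrary degree, $\tau_i(xy) = \tau_i(x)\,\tau_{i+j}\widetilde\tau_j(y)$, proved by induction on $|x|$ (the case $|x|=1$ is \eqref{eq:inductive}, the inductive step splits $x = x' a$ with $a\in A_1$ and reassociates). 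Granting this, a general element of $(R)$ in degree $n\ge m$ is a sum of terms $u\, r\, v$ with $r\in R\subseteq A_1^{\otimes m}$, $|u| = p$, $|v| = q$, $p+m+q = n$; applying the product rule twice gives $\tau_i(urv) = \tau_i(u)\,\tau_{i+p}(r)\,\tau_{i+p+m}\widetilde\tau_m\widetilde\tau_q\cdots$ — more precisely $\tau_i(urv) = \tau_i(u)\cdot\tau_{i+p}\widetilde\tau_p\bigl(rv\bigr)$ and then $\tau_{i+p}\widetilde\tau_p(rv) = \tau_{i+p}\widetilde\tau_p(r)\cdot(\cdots)$, and since $\tau_{i+p}\widetilde\tau_p$ restricted to degree $m$ is still of the form $\tau_{a}(R) = R$–type (one needs $\widetilde\tau_p(R)=R$ and $\tau_a(R)=R$, both already known), the middle factor lands back in $R$, so $\tau_i(urv)\in(R)$. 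Thus $\tau_i$ and $\widetilde\tau_i$ are well-defined inverse graded linear maps on $A$.

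\textbf{Step 3: $\tau$ is a twisting system.}
It remains to verify axiom (2), $\tau_i(ab) = \tau_i(a)\,\tau_{i+j}\tau_j^{-1}(b)$ for homogeneous $a$ of degree $j$ and $b$ of any degree, where now $\tau_j^{-1} = \widetilde\tau_j$ and everything is read in $A$. This is exactly the product rule of Step 2 read modulo $(R)$, so once Step 2's sub-claim is in place this is immediate; conditions (5) and (6) hold by construction, since $\tau_i(1)=1$ and $\tau_0 = \id$ by \eqref{eq:inductive}. By the equivalence of (1)--(4) in \cite{Zhang1996}, $\tau$ is a twisting system of $A$.

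\textbf{Main obstacle.}
The only genuinely delicate point is Step 2: one must check that the inductively defined maps, which a priori are only defined by a left-to-right bracketing rule, are in fact associative enough to respect an arbitrary two-sided ideal. The crux is the product rule $\tau_i(xy) = \tau_i(x)\tau_{i+|x|}\widetilde\tau_{|x|}(y)$ for $x$ of arbitrary degree (not just degree one), and then the bookkeeping of indices when pushing $\tau_i$ past a block $u\,r\,v$ so that the image of $r$ is acted on by a single $\tau_{a}\widetilde\tau_{b}$ whose restriction to $A_1^{\otimes m}$ preserves $R$ — this is where the hypothesis $\tau_i(R)=R$ for \emph{all} $i$ (equivalently, after Step 1, $\widetilde\tau_i(R)=R$ for all $i$, and their composites) is used in full strength.
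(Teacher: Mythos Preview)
Your proposal is correct and follows essentially the same approach as the paper: establish the general product rule $\tau_i(xy)=\tau_i(x)\,\tau_{i+|x|}\widetilde\tau_{|x|}(y)$ on the free algebra, verify that $\tau_i$ and $\widetilde\tau_i$ are mutual inverses there, and then check that $\tau_i$ preserves the ideal $(R)$. The only cosmetic differences are the ordering of these steps and the bookkeeping for ideal preservation---you argue directly via a $urv$ decomposition while the paper inducts on degree using terms $ar$ and $ra$ with $|a|=1$; both are routine once the product rule is available (though your displayed computation in Step~2 has garbled indices before the ``more precisely'' and should be cleaned up to read $\tau_i(urv)=\tau_i(u)\cdot\tau_{i+p}\widetilde\tau_p(r)\cdot\tau_{i+p+m}\widetilde\tau_{p+m}(v)$).
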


\begin{proof}
By assumption, it is clear that $\tau_i$ is well-defined and bijective on all degrees up to and including $m$, and that $\widetilde \tau_i$ is well-defined on all degrees less than $m$, and is inverse to $\tau_i$ on degree 1 by definition. Furthermore, again by definition, $\tau$ and $\widetilde \tau$ satisfy the twisting system axioms and inverse twisting system axioms, respectively, on degrees $\leq 2$. We now show inductively on arbitrary degree $n$ that $\tau$ and $\widetilde \tau$ are well-defined, bijective, inverse to each other, and satisfy the (inverse) twisting system axioms.

We first show that $\tau$ satisfies the twisting systems axioms on the free algebra $\kk\langle A_1\rangle$. Suppose that $a$ and $b$ are homogeneous monomial elements of degrees $j$ and $n-j$, respectively. Assume that $a=a_1 a_2$ for $a_1$ of degree 1 and $a_2$ of degree $j-1$; note that elements of this form span $A_j$, since we assume that $A$ is generated in degree 1. Then for all $i \in \mathbb Z$, we have
    \begin{align*}
        \tau_i(ab)&= \tau_i(a_1 a_2 b)\\
        &= \tau_i(a_1) \tau_{i+1}  \widetilde \tau_1 (a_2 b)\\
        &=\tau_i(a_1) \tau_{i+1}\big( \widetilde \tau_1(a_2) \tau_{j-1} \widetilde \tau_j(b)\big)\\
        &=\tau_i(a_1) \tau_{i+1} \widetilde \tau_1(a_2) \tau_{i+j} \widetilde \tau_{j-1}\tau_{j-1} \widetilde \tau_j(b)\\
        &=\tau_i(a_1) \tau_{i+1}\widetilde \tau_1(a_2) \tau_{i+j}\widetilde \tau_j(b)\\
        &=\tau_i(a_1 a_2) \tau_{i+j}\widetilde \tau_j(b)\\
        &=\tau_i(a) \tau_{i+j}\widetilde \tau_j(b).
    \end{align*}
The second equality is from the definition of $\tau_i$ in \eqref{eq:inductive}. The third, fourth, and sixth equalities follow from the inductive hypothesis as $\tau$ and $\widetilde \tau$ satisfy the (inverse) twisting axioms up to degrees $< n$.  Thus, $\tau$ satisfies the twisting system axioms. An analogous argument shows that $\widetilde \tau$ satisfies the inverse twisting system axioms. Moreover, we note that $\tau_i$ and $\widetilde \tau_i$ are inverse to one another on $\kk\langle A_1\rangle$ by induction since 
\[
    \widetilde \tau_i \tau_i (a b) = \widetilde \tau_i (\tau_i(a) \tau_{i+1} \widetilde \tau_1 (b)) = \widetilde \tau_i\tau_i (a) \tau_1\widetilde \tau_{i+1} \tau_{i+1} \widetilde \tau_1 (b) = ab,
\]
for any $a \in A$ of degree 1 and $b \in A$ of degree $n-1$.
    
It remains to show that for any $i \in \mathbb Z$, $\tau_i$ preserves the homogeneous relation ideal $(R)$ of $A$ in $\kk\langle A_1\rangle$. It is trivial for relations of degree $n\leq m$. An arbitrary relation of degree $n>m$ is a linear combination of terms of the form $r a$ and $a r$, where $a$ is an element of degree $1$ in $ A$ and $r$  is a relation of degree $n-1$. But note that $\tau_i(a r)$ is indeed a relation of $A$, since $\tau_i(a r) = \tau_i(a) \tau_{i+1} \widetilde \tau_1(r)$ by the twisting system axioms, and $\tau_{i+1} \widetilde \tau_1(r)$ is a relation of $A$ by the inductive hypothesis. Similarly, $\tau_i$ sends $ra$ to a relation of $A$, so $\tau_i$ preserves all homogeneous relations of degree $n$. This completes the proof.
\end{proof}

%%%%%%%%%%%%%%%%%%%%%%%%%%%%%%%%%

Recall that the \emph{Koszul dual} of an $m$-homogeneous algebra $A=\kk \langle A_1\rangle/(R)$ is the $m$-homogeneous algebra
\[
A^! := \kk \langle A_1^*\rangle / (R^\perp),
\]
where $A_1^*$ is the vector space dual of $A_1$ and $R^\perp \subseteq (A_1^*)^{\otimes m}$ is the subspace orthogonal to $R$ with respect to the natural evaluation $\langle-,-\rangle: A_1^*\times A_1\to \kk$. 

Let $\tau=\{\tau_i: i \in \mathbb{Z}\}$ be a twisting system of $A$ with inverse twisting system $\{\tau_i^{-1}: i\in \mathbb Z\}$. We define the dual twisting system $\tau^!=\{\tau_i^!: i\in \mathbb Z\}$ together with the inverse dual twisting system $(\tau^!)^{-1}=\{(\tau_i^!)^{-1}: i\in \mathbb Z\}$ on the Koszul dual $A^!$ such that 
\[\tau_i^!|_{A^!_1} :=(\tau_i^{-1})^* = (\tau_i^*)^{-1} \qquad \text{ and } \qquad 
(\tau_i^!)^{-1}|_{A_1^!} := \tau_i^*,\] 
as linear maps $A_1^* \to A_1^*$. 
For $a \in A^!_1$ and $b \in A^!$ is of any positive degree, we define each $\tau_i^!$ and $(\tau_i^!)^{-1}$ inductively on the total degree of the element $ab$ as follows: 
\begin{equation}
\label{dual twisting}
\tau_i^! (ab)=\tau_i^!(a) \tau_{i+1}^! (\tau^{-1}_1)^!(b),
\qquad \text{and} \qquad 
(\tau_i^!)^{-1}(ab)=(\tau_i^!)^{-1} (a) \tau_1^! (\tau_{i+1}^!)^{-1}(b).
\end{equation}
Using \Cref{twist-deg-1}, in the following we show that these maps give well-defined twisting systems of $A^!$.

\begin{proposition}
\label{prop:twistingtau!}
Let $A, A^!$ and $\tau^{\pm 1},$ $(\tau^!)^{\pm 1}$ be defined as above. The collection of linear maps $\tau^!$, defined in \eqref{dual twisting}, forms a twisting system of $A^!$ with inverse  $(\tau^!)^{-1}$. 
\end{proposition}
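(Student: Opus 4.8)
The plan is to reduce the statement to an application of \Cref{twist-deg-1}, applied now to the $m$-homogeneous algebra $A^!$ in place of $A$. By construction, $\tau_i^!$ and $(\tau_i^!)^{-1}$ are defined on $A_1^! = A_1^*$ by the mutually inverse linear maps $(\tau_i^*)^{-1}$ and $\tau_i^*$, and they are extended to $\kk\langle A_1^!\rangle$ by exactly the inductive rules \eqref{eq:inductive} (with $A_1^!$ in place of $A_1$ and $(\tau_1^!)^{-1}$ playing the role of $\widetilde\tau_1$); note the defining formulas in \eqref{dual twisting} match \eqref{eq:inductive} verbatim once one identifies $\widetilde{\tau_i^!} = (\tau_i^!)^{-1}$. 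Thus, by \Cref{twist-deg-1}, it suffices to verify the single hypothesis that $\tau_i^!(R^\perp) = R^\perp$ for all $i \in \mathbb Z$, where $R^\perp \subseteq (A_1^*)^{\otimes m}$ is the relation space of $A^!$.

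The key step, then, is the following linear-algebra fact: for a bijective linear map $\phi\colon A_1 \to A_1$ and its transpose-inverse $(\phi^{-1})^*\colon A_1^* \to A_1^*$, and for any subspace $R \subseteq A_1^{\otimes m}$, one has $\phi^{\otimes m}(R) = R$ if and only if $((\phi^{-1})^*)^{\otimes m}(R^\perp) = R^\perp$. I would prove this by recalling that orthogonal complement is inclusion-reversing and, for a bijection, that $(\psi^{\otimes m})(S)^\perp = ((\psi^{-1})^*)^{\otimes m}(S^\perp)$ for any subspace $S$ and any bijective $\psi$ on $A_1$ — this is just the statement that pairing is natural under the action of $\GL(A_1)$ on $A_1^{\otimes m}$ and its dual. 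The subtlety here is bookkeeping the degree-$i$ twist: on degree $m$, the map $\tau_i^!$ restricted to $(A_1^!)^{\otimes m}$ is a composite of the form $(\tau_i^*)^{-1} \otimes (\tau_{i+1}^*)^{-1} \otimes \cdots$ interleaved with the $\tau_j^*$ coming from the inductive $(\tau^{-1})^!$ factors, but — just as in the proof of \Cref{twist-deg-1} — the inductive application of the twisting axioms on lower degrees telescopes these so that on the relation space the net map is $\big((\tau_{\bullet}^{-1})^*\big)^{\otimes m}$ for an appropriate sequence of indices; since by hypothesis $\tau$ is a twisting system of $A$ and $\tau_i(R)=R$ holds for the degree-$m$ part (which is where the relations live), the dual statement $\tau_i^!(R^\perp) = R^\perp$ follows from the linear-algebra fact applied degree-by-degree.

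Concretely, I would organize the proof as follows. First, observe that $\tau_i^!$ and $(\tau_i^!)^{-1}$ agree on $A_1^!$ with mutually inverse bijections and are extended by the rules \eqref{eq:inductive}; hence \Cref{twist-deg-1} applies to $A^!$ once its hypothesis is checked. Second, establish the linear-algebra duality lemma above relating $\phi^{\otimes m}(R) = R$ to $((\phi^{-1})^*)^{\otimes m}(R^\perp)=R^\perp$. Third, unwind the definition of $\tau_i^!$ on $(A_1^!)^{\otimes m}$ using the twisting-system identities (which hold up to degree $m$ by the first observation combined with the degrees $\le 2$ base case inside \Cref{twist-deg-1}), to see that $\tau_i^!|_{(A_1^!)^{\otimes m}}$ is, up to the identifications, dual-inverse to $\tau_i|_{A_1^{\otimes m}}$; conclude $\tau_i^!(R^\perp) = R^\perp$ from the lemma and the hypothesis $\tau_i(R)=R$. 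Fourth, invoke \Cref{twist-deg-1} to conclude that $\tau^!$ is a well-defined twisting system of $A^!$ with inverse $(\tau^!)^{-1}$.

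The main obstacle I anticipate is not conceptual but notational: carefully tracking how the interleaved indices $i, i+1, \dots$ in the inductive extension \eqref{dual twisting} collapse — via the twisting axioms — to a clean tensor power on the $m$-dimensional relation slice, and making sure the transpose and inverse are taken in the correct order (the identity $(\tau_i^{-1})^* = (\tau_i^*)^{-1}$ is used here, which is why it was recorded in the setup). Once that identification is made cleanly, everything else is a direct citation of \Cref{twist-deg-1} and the elementary duality of orthogonal complements under $\GL(A_1)$.
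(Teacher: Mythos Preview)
Your proposal is correct and follows essentially the same route as the paper: reduce to \Cref{twist-deg-1} by verifying $\tau_i^!(R^\perp) = R^\perp$ via the duality between $\tau_i^!$ and $\tau_i^{-1}$ under the natural pairing. The paper's execution sidesteps the index-unwinding you anticipate by proving directly, by induction on $n$, the identity $\langle \tau_i^!(f), a\rangle = \langle f, \tau_i^{-1}(a)\rangle$ for $f \in (A_1^*)^{\otimes n}$ and $a \in A_1^{\otimes n}$ (using the inverse twisting axiom for $\tau$ in the inductive step), from which $\tau_i^!(R^\perp) = R^\perp$ follows at once.
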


\begin{proof}
By \Cref{twist-deg-1}, it is enough to show that $\tau_i^!(R^\perp)= R^\perp$. We first inductively show that 
\begin{equation}
    \label{eq:tauwelldefined}
\langle \tau_i^!(f), a\rangle =\langle f, \tau_i^{-1}(a)\rangle 
\qquad \textnormal{ and } \qquad \langle (\tau^{!}_i)^{-1}(f), a\rangle =\langle f, \tau_i(a)\rangle 
\end{equation}
for $f \in (A_1^{\otimes n})^*$ and $a \in  A_1^{\otimes n}$ for $n\ge 1$. The case $n=1$ follows from the definition. Assume the inductive hypothesis, we now show \eqref{eq:tauwelldefined} holds for $n+1$. Without loss of generality, let $f=yg$ and $a=xh$ for any $y\in A_1^*,g\in (A_1^*)^{\otimes n}$ and $ x\in A_1, h\in (A_1)^{\otimes n}$. Then we have 
    \begin{align*}
        \langle \tau_i^!(f), a\rangle  &= \langle \tau_i^!(y)\tau_{i+1}^! \tau_{i+1}^!(\tau^!_1)^{-1}(g), xh\rangle \\
        &=\langle \tau_i^! (y), x\rangle \langle \tau_{i+1}^!(\tau^!_1)^{-1}(g), h\rangle \\
        &= \langle y, \tau_i^{-1} (x)\rangle \langle g, \tau_1 \tau_{i+1}^{-1} (h)\rangle \\
        &= \langle yg, \tau_i^{-1} (x)\tau_1 \tau^{-1}_{i+1} (h)\rangle \\
        &=\langle f, \tau_i^{-1}(a)\rangle,
    \end{align*}
where the last equality follows from the fact that $\tau$ is a twisting system. By a straightforward induction, it similarly follows that $\langle (\tau_i^!)^{-1}(f), a\rangle =\langle f, \tau_i(a)\rangle$. So we have
$\tau^!_i(R^\perp)=R^\perp\Leftrightarrow \langle \tau_i^!(R^\perp), R\rangle =0 \Leftrightarrow \langle R^\perp, (\tau_i^{-1})(R)\rangle =0\Leftrightarrow\tau^{-1}_i(R)=R\Leftrightarrow \tau_i(R)=R$, which holds by assumption. It follows that $\tau^!$ is a twisting system of $A^!$.
\end{proof}

\begin{proposition}
\label{prop:tau!}
Let $A$ be an $m$-homogeneous algebra with a twisting system $\tau=\{\tau_i: i\in \mathbb Z\}$. Then $(A^!)^{\tau^!}=(A^{\tau})^!$.    
\end{proposition}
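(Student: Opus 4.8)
The plan is to identify $(A^{\tau})^{!}$ and $(A^{!})^{\tau^{!}}$ as the \emph{same} quotient of the free algebra $\kk\langle A_1^{*}\rangle$, by computing the relation space of each and checking that the two coincide. The key preliminary is an explicit description of the relation space of a Zhang twist of an arbitrary $m$-homogeneous algebra: if $C=\kk\langle C_1\rangle/(R_C)$ is $m$-homogeneous and $\sigma=\{\sigma_i:i\in\mathbb Z\}$ is a twisting system of $C$ normalized so that $\sigma_0=\id$, then $C^{\sigma}$ is again $m$-homogeneous, with relation space
\[
R_C^{\sigma}\ :=\ \big(\sigma_0\ot\sigma_1\ot\cdots\ot\sigma_{m-1}\big)^{-1}(R_C)\ \subseteq\ C_1^{\ot m},
\]
where each $\sigma_k$ denotes the restriction $\sigma_k|_{C_1}$. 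To see this I would first check, by induction using only the definition of the twisted product, that an iterated product of degree-one elements satisfies $c_1\cdot_{\sigma}c_2\cdot_{\sigma}\cdots\cdot_{\sigma}c_m=c_1\,\sigma_1(c_2)\,\sigma_2(c_3)\cdots\sigma_{m-1}(c_m)$, the right-hand side being computed with the original multiplication of $C$; hence the degree-$m$ multiplication map $(C^{\sigma})_1^{\ot m}=C_1^{\ot m}\to(C^{\sigma})_m=C_m$ equals the multiplication of $C$ precomposed with $\sigma_0\ot\cdots\ot\sigma_{m-1}$, and its kernel is exactly $R_C^{\sigma}$. That $C^{\sigma}$ has no relations in degree $>m$ is a Hilbert-series comparison: twisting preserves the graded dimensions, and by \Cref{twist-deg-1} the auxiliary $m$-homogeneous algebra $C':=\kk\langle C_1\rangle/(R_C^{\sigma})$ admits a twisting system $\sigma'$ built from $\{\sigma_i^{-1}|_{C_1}\}$ whose twist $(C')^{\sigma'}$ has degree-$m$ relation space $R_C$; then the surjections $\kk\langle C_1\rangle/(R_C)\twoheadrightarrow(C')^{\sigma'}$ and $C'\twoheadrightarrow C^{\sigma}$, together with $\dim_{\kk}(C')^{\sigma'}_n=\dim_{\kk}C'_n$ and $\dim_{\kk}C^{\sigma}_n=\dim_{\kk}C_n$, force all of these maps to be isomorphisms.

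Granting this, the computation is short. Applied to $C=A$ and $\sigma=\tau$, it shows $A^{\tau}$ is $m$-homogeneous with relation space $(\tau_0\ot\cdots\ot\tau_{m-1})^{-1}(R)$, so that by definition of the Koszul dual
\[
(A^{\tau})^{!}\ =\ \kk\langle A_1^{*}\rangle\big/\big(\,((\tau_0\ot\cdots\ot\tau_{m-1})^{-1}(R))^{\perp}\,\big).
\]
Using the standard identities $(\phi(W))^{\perp}=(\phi^{-1})^{*}(W^{\perp})$ for invertible $\phi$ and $(\psi_0\ot\cdots\ot\psi_{m-1})^{*}=\psi_0^{*}\ot\cdots\ot\psi_{m-1}^{*}$, together with $\tau_0=\id$, the orthogonal complement above simplifies to $(\id\ot\tau_1^{*}\ot\cdots\ot\tau_{m-1}^{*})(R^{\perp})$. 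On the other hand, $\tau^{!}$ is a twisting system of the $m$-homogeneous algebra $A^{!}=\kk\langle A_1^{*}\rangle/(R^{\perp})$ by \Cref{prop:twistingtau!}, so the same relation-space description, now applied to $C=A^{!}$ and $\sigma=\tau^{!}$, shows that $(A^{!})^{\tau^{!}}$ has relation space $(\tau^{!}_0\ot\cdots\ot\tau^{!}_{m-1})^{-1}(R^{\perp})$. Since $(\tau^{!}_i)^{-1}|_{A_1^{*}}=\tau_i^{*}$ by construction and $\tau_0=\id$, this is again $(\id\ot\tau_1^{*}\ot\cdots\ot\tau_{m-1}^{*})(R^{\perp})$. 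Hence $(A^{\tau})^{!}$ and $(A^{!})^{\tau^{!}}$ are presented by the same generators and the same relation space, and therefore are equal.

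I expect the main difficulty to be organizational: one must keep careful track of which of $\tau_i$, $\tau_i^{-1}$, $\tau_i^{*}$, $(\tau_i^{*})^{-1}$ acts in which tensor slot when passing through both the Zhang twist and the Koszul dual, and confirm that both computations land on exactly the same subspace of $(A_1^{*})^{\ot m}$ rather than on twisted copies of it — here the normalization $\tau_0=\id$ is precisely what makes the two sides match on the nose. The one genuinely substantive point is the assertion that a Zhang twist of an $m$-homogeneous algebra is again $m$-homogeneous, so that the Koszul dual $(A^{\tau})^{!}$ is even defined; this is why I would isolate the Hilbert-series argument above, using \Cref{twist-deg-1} to produce the required inverse twisting system.
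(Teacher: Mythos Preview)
Your proposal is correct and follows essentially the same route as the paper: both arguments identify the relation spaces of $(A^{\tau})^{!}$ and $(A^{!})^{\tau^{!}}$ inside $(A_1^{*})^{\otimes m}$ and verify they coincide, using the explicit description $R^{\tau}=(\id\otimes\tau_1^{-1}\otimes\cdots\otimes\tau_{m-1}^{-1})(R)$. The only cosmetic differences are that the paper cites \cite[Lemma~5.1.1]{Mori-Smith2016} for the $m$-homogeneity of $A^{\tau}$ where you supply a Hilbert-series argument, and that the paper computes $(R^{\tau})^{\perp}$ via the pairing identity $\langle\tau_i^{!}(f),a\rangle=\langle f,\tau_i^{-1}(a)\rangle$ from \Cref{prop:twistingtau!} rather than the generic formula $(\phi(W))^{\perp}=(\phi^{-1})^{*}(W^{\perp})$.
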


\begin{proof}
Write $A=\kk \langle A_1\rangle/(R)$ with $m$-homogeneous relations $R\subseteq A_1^{\otimes m}$. By \Cref{prop:twistingtau!}, $\tau^!=\{\tau_i^!: i\in \mathbb Z\}$ is a twisting system of $A^!$. Similar to \cite[Lemma 5.1.1]{Mori-Smith2016}, one can check that $A^{\tau}=\kk \langle A_1\rangle/(R^{\tau})$, where $R^{\tau}=(\id \otimes \tau_1^{-1}\otimes \tau_2^{-1}\otimes \cdots \otimes \tau_{m-1}^{-1})(R)$. Notice that 
\[0=\langle (R^{\tau})^{\perp}, R^{\tau}\rangle=\langle (R^\tau)^\perp, (\id \otimes \tau_1^{-1}\otimes \tau_2^{-1}\otimes \cdots \otimes \tau_{m-1}^{-1})(R)\rangle =\langle (\id \otimes \tau_1^!\otimes \cdots\otimes \tau_{m-1}^!)(R^\tau)^\perp, R\rangle. \] Hence, $R^\perp=(\id \otimes \tau_1^!\otimes \cdots\otimes \tau_{m-1}^!)(R^\tau)^\perp$ and so \[(R^\perp)^{\tau^!}=(\id \otimes (\tau^{-1}_1)^!\otimes \cdots\otimes (\tau_{m-1}^{-1})^!)(\id\otimes \tau_1^!\otimes \cdots \otimes \tau_{m-1}^!)(R^\tau)^\perp=(R^{\tau})^\perp.\] 
As a result, we have 
$(A^{\tau})^!=(\kk \langle A_1\rangle/R^{\tau})^!=\kk \langle A_1^* \rangle/((R^{\tau})^{\perp})=\kk \langle A_1^* \rangle/((R^\perp)^{\tau^!})=(A^!)^{\tau^!}.$
\end{proof}
Let $V, W$ be any two finite-dimensional vector spaces. For any integer $m \geq 1$, we denote the {\emph shuffle map}
\[
\Sh_{V,W,m}: V^{\otimes m} \otimes W^{\otimes m} \xrightarrow{\cong} (V\otimes W)^{m} 
\]
to be the map sending
\[
v_1 \otimes v_2 \otimes ... \otimes v_m \otimes w_1 \otimes... \otimes w_m \mapsto v_1 \otimes w_1 \otimes v_2 \otimes w_2\otimes...\otimes v_m \otimes w_m, 
\]
for any $v_i \in V$ and $w_j \in W$. When $V$, $W$, and $m$ are clear from context, we omit the subscripts and denote this map by $\Sh$.

For two connected graded algebras $A=\kk\langle A_1\rangle/(R(A))$ and $B=\kk\langle B_1\rangle/(R(B))$ with $m$-homogeneous relations $R(A)\subseteq (A_1)^{\otimes m}$ and $R(B)\subseteq (B_1)^{\otimes m}$ respectively, we extend Manin's bullet product \cite[\S 4.2]{Manin2018} to $A$ and $B$ such that
\[
A\bullet B~:=~\frac{\kk \langle A_1\otimes B_1\rangle}{\left(\Sh(R(A)\otimes R(B))\right)},
\]
where $\Sh: (A_1)^{\otimes m} \otimes (B_1)^{\otimes m}\to (A_1\otimes B_1)^{\otimes m}$ is the shuffle map.  
When $B=A^!=\kk\langle A_1^*\rangle/(R(A)^\perp)$ is the $m$-Koszul dual algebra of $A$, by the definition of the bullet product we see that $A\bullet A^!$ is a connected graded bialgebra with matrix comultiplication defined on the generators of $A_1\otimes A_1^*$. In particular, choose a basis $\{x_1,\ldots,x_n\}$ for $A_1$ and let $\{x^1,\ldots,x^n\}$ be the dual basis for $(A^!)_1= A_1^*$. Write $z_j^k=x_j\otimes x^k \in A_1\otimes A_1^*$ as the generators for $A\bullet A^!$. Then the coalgebra structure of $A\bullet A^!$ is given by
\[
\Delta(z_j^k)=\sum_{1\leq i\leq n} z_i^k\otimes z_j^i, \qquad \text{ and } \qquad \varepsilon(z_j^k)=\delta_{j,k},\quad \text{for any }  1 \leq j,k \leq n.
\]

The following result is a straightforward generalization of the quadratic case in \cite{Manin2018}, which describes Manin's universal bialgebra $\underline{\rm end}^r(A)$ and Manin's universal quantum group $\underline{\rm aut}^r(A)$ in terms of the bullet product of $A$ and its Koszul dual $A^!$. 

\begin{lemma}\cite[Lemma 2.1.5]{HNUVVW3}
\label{lem:ManinM}
Let $A$ be an $m$-homogeneous algebra and $A^!$ be its Koszul dual. We have:
\begin{itemize}
    \item[(1)] $\underline{\rm end}^r(A)\cong A\bullet A^!$; 
    \item[(2)] $\underline{\rm aut}^r(A)$ is the Hopf envelope of $\underline{\rm end}^r(A)$.
\end{itemize}
\end{lemma}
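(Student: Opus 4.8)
The plan is to prove part (1) by checking that $A\bullet A^!$, equipped with the matrix coalgebra structure described above, satisfies the universal property defining $\underline{\rm end}^r(A)$ in Definition~\ref{def:ManinU}, and then to deduce part (2) formally from the universal property of the Hopf envelope. Throughout, fix a basis $\{x_1,\dots,x_n\}$ of $A_1$ with dual basis $\{x^1,\dots,x^n\}$ of $A_1^*$, write $z_j^k=x_j\otimes x^k$ for the generators of $A\bullet A^!$, and set $R:=R(A)\subseteq A_1^{\otimes m}$.

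First I would define $\rho\colon A\to A\otimes(A\bullet A^!)$ by $\rho(x_j)=\sum_i x_i\otimes z_j^i$ on degree one and extend it multiplicatively to $\kk\langle A_1\rangle$, then check it descends to $A$. For a relation $r=\sum r^{j_1\cdots j_m}\,x_{j_1}\otimes\cdots\otimes x_{j_m}\in R$ one computes
\[
\rho(r)=\sum_{i_1,\dots,i_m} x_{i_1}\otimes\cdots\otimes x_{i_m}\otimes\Big(\sum_{j_1,\dots,j_m} r^{j_1\cdots j_m}\, z_{j_1}^{i_1}\cdots z_{j_m}^{i_m}\Big),
\]
which lies in $R\otimes(A\bullet A^!)$ if and only if, for every $\phi=\sum\phi_{k_1\cdots k_m}x^{k_1}\otimes\cdots\otimes x^{k_m}\in R^\perp$, pairing $\phi$ against the $A_1^{\otimes m}$-component is zero; this pairing equals $\sum r^{j_1\cdots j_m}\phi_{i_1\cdots i_m}\, z_{j_1}^{i_1}\cdots z_{j_m}^{i_m}$, which is precisely a generator of the ideal $(\Sh(R\otimes R^\perp))$ and hence vanishes in $A\bullet A^!$. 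So $\rho$ is a well-defined grading-preserving right coaction making $A$ a comodule algebra. One must also verify that $\Delta(z_j^k)=\sum_i z_i^k\otimes z_j^i$ and $\varepsilon(z_j^k)=\delta_{j,k}$ extend to well-defined algebra maps on $A\bullet A^!$, so that it is genuinely a bialgebra; this is the same orthogonality computation with $R^\perp$ now occupying the ``middle'' tensor factor, exactly as in Manin's quadratic argument in \cite[\S4]{Manin2018}.

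Next I would check the universal property: given a bialgebra $B$ with a grading-preserving right coaction $\tau\colon A\to A\otimes B$, write $\tau(x_j)=\sum_i x_i\otimes b_j^i$ in degree one. Since $A$ is generated in degree one, $\tau$ is determined by the $b_j^i$; the coaction and counit axioms for the comodule $A$ in degree one force $\Delta_B(b_j^k)=\sum_i b_i^k\otimes b_j^i$ and $\varepsilon_B(b_j^k)=\delta_{j,k}$, while the requirement that $\tau$ descend to $A=\kk\langle A_1\rangle/(R)$ forces $\sum r^{j_1\cdots j_m}\phi_{i_1\cdots i_m}\, b_{j_1}^{i_1}\cdots b_{j_m}^{i_m}=0$ in $B$ for all $r\in R$, $\phi\in R^\perp$ — precisely the images under $z_j^k\mapsto b_j^k$ of the defining relations of $A\bullet A^!$. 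Hence $z_j^k\mapsto b_j^k$ extends to a well-defined bialgebra map $f\colon A\bullet A^!\to B$, it satisfies $(\id\otimes f)\circ\rho=\tau$ by construction, and it is the unique such map since any such map is forced on the generators $z_j^k$. This is exactly the universal property of Definition~\ref{def:ManinU}, so $\underline{\rm end}^r(A)\cong A\bullet A^!$.

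Finally, for (2), let $H$ be the Hopf envelope of $\underline{\rm end}^r(A)$ with canonical bialgebra map $\iota\colon\underline{\rm end}^r(A)\to H$; composing $\rho$ with $\id\otimes\iota$ makes $A$ a graded right $H$-comodule algebra. If $K$ is any Hopf algebra with a grading-preserving right coaction on $A$, then by (1) there is a unique bialgebra map $\underline{\rm end}^r(A)\to K$ compatible with the coactions, which by the universal property of the Hopf envelope factors uniquely through $\iota$ via a Hopf algebra map $H\to K$; this Hopf map is the unique one compatible with the coactions, because a Hopf algebra morphism out of $H$ is determined by its restriction to the image of $\underline{\rm end}^r(A)$, which generates $H$ under the antipode. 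Hence $H$ satisfies the defining property of $\underline{\rm aut}^r(A)$, proving (2). I expect the main obstacle to be the index bookkeeping in the two orthogonality checks — confirming that $R^\perp$ is exactly the annihilator that makes both $\rho$ descend to $A$ and $\Delta$ descend to $A\bullet A^!$ — together with keeping careful track of where ``finitely generated in degree one'' is used to reduce everything to the degree-one matrix coefficients.
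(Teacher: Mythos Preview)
The paper does not give its own proof of this lemma: it is stated with the citation \cite[Lemma 2.1.5]{HNUVVW3} and the remark that it is ``a straightforward generalization of the quadratic case in \cite{Manin2018}'', with no proof environment following. Your proposal is a correct and complete sketch of exactly that standard argument --- verifying the universal property of $\underline{\rm end}^r(A)$ via the orthogonality between $R$ and $R^\perp$, and then deducing (2) from the universal property of the Hopf envelope --- so there is nothing to compare against and no gap to flag.

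One minor comment: in your uniqueness argument for (2), you justify it by saying the image of $\underline{\rm end}^r(A)$ generates $\mathcal H(\underline{\rm end}^r(A))$ under the antipode. That is true, but the cleaner route is simply to reuse the universal property of the Hopf envelope a second time: any Hopf map $g'\colon \mathcal H(\underline{\rm end}^r(A))\to K$ compatible with the coactions restricts along $\iota$ to a bialgebra map compatible with the coactions, which by (1) must equal $f$, and then the uniqueness clause of the Hopf envelope forces $g'=g$.
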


We now show that the bullet product of two twisting systems of $A$ and of $B$ is indeed a twisting system of $A \bullet B$. As a consequence, we can extend any twisting system of $A$ to a twisting system of its universal bialgebra $\underline{\rm end}^r(A)$.
\begin{proposition}
\label{bullet-twist}
    Let $A$ and $B$ be two $m$-homogeneous algebras. If $\tau=\{\tau_i : i \in \mathbb{Z}\}$ is a twisting system of $A$, and $\omega = \{\omega_i : i \in \mathbb{Z}\}$ is a twisting system of $B$, then there exists a twisting system $\tau \bullet \omega$ of the algebra $A \bullet B$, where $(\tau \bullet \omega)_i$ on the degree one space $(A \bullet B)_1 \cong A_1 \otimes B_1$ corresponds to the map $\tau_i \otimes \omega_i$. Furthermore, $(A \bullet B)^{ \tau \bullet \omega} \cong A^\tau \bullet B^\omega$ as $m$-homogeneous algebras.
\end{proposition}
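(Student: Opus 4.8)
The plan is to apply \Cref{twist-deg-1} to the $m$-homogeneous algebra $A\bullet B=\kk\langle A_1\otimes B_1\rangle/\big(\Sh(R(A)\otimes R(B))\big)$ with the prescribed degree-one maps $(\tau\bullet\omega)_i:=\tau_i\otimes\omega_i$ on $(A\bullet B)_1=A_1\otimes B_1$; then everything reduces to checking that the extension of $\{\tau_i\otimes\omega_i\}$ via the rules \eqref{eq:inductive} sends $\Sh(R(A)\otimes R(B))$ to itself. I will write $\overline{\tau}_i,\widetilde{\overline{\tau}}_i$ for the extensions of $\tau_i|_{A_1},\tau_i^{-1}|_{A_1}$ to the free algebra $\kk\langle A_1\rangle$ given by \eqref{eq:inductive} (mutually inverse, as in the proof of \Cref{twist-deg-1}), and likewise $\overline{\omega}_i,\widetilde{\overline{\omega}}_i$ and $\overline{(\tau\bullet\omega)}_i,\widetilde{\overline{(\tau\bullet\omega)}}_i$. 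Note first that since $\tau$ is a twisting system of $A$ and $A$ is generated in degree one, the twisting axioms (2) and (4) are exactly the recursions \eqref{eq:inductive} (with $\widetilde\tau=\tau^{-1}$); hence $\overline{\tau}_i$ descends along the canonical surjection $\kk\langle A_1\rangle\to A$ to $\tau_i$, so $\overline{\tau}_i$ preserves the ideal $(R(A))$, and being bijective with $\widetilde{\overline{\tau}}_i$ also preserving $(R(A))$, it restricts in degree $m$ to $\overline{\tau}_i(R(A))=R(A)$; similarly $\overline{\omega}_i(R(B))=R(B)$.

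The crux is the following shuffle-compatibility, which I would prove by simultaneous induction on $n\ge 1$: identifying $(A_1)^{\otimes n}\otimes (B_1)^{\otimes n}$ with $(A_1\otimes B_1)^{\otimes n}$ via $\Sh=\Sh_{A_1,B_1,n}$,
\[
\overline{(\tau\bullet\omega)}_i\circ\Sh=\Sh\circ(\overline{\tau}_i\otimes\overline{\omega}_i)
\qquad\text{and}\qquad
\widetilde{\overline{(\tau\bullet\omega)}}_i\circ\Sh=\Sh\circ(\widetilde{\overline{\tau}}_i\otimes\widetilde{\overline{\omega}}_i)
\qquad\text{for all }i\in\mathbb Z.
\]
For $n=1$ this is the definition. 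For the inductive step, write an element of $(A_1\otimes B_1)^{\otimes n}$ as $(a_1\otimes b_1)\cdot \Sh\big((a_2\otimes\cdots\otimes a_n)\otimes(b_2\otimes\cdots\otimes b_n)\big)$; because $\Sh$ carries the leftmost factor of $(A_1)^{\otimes n}$ and of $(B_1)^{\otimes n}$ to the single leftmost factor $a_1\otimes b_1$ of $(A_1\otimes B_1)^{\otimes n}$, and because the index shift in \eqref{eq:inductive} is $+|a_1|=+1$ on $A\bullet B$, on $A$, and on $B$ alike, applying \eqref{eq:inductive} on all three algebras and invoking the case $n-1$ (for the index $i+1$ and for the inverse system at index $1$) reduces the claim for $n$ to the trivial identity on the split-off factor $a_1\otimes b_1$. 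I expect this bookkeeping --- keeping the forward recursion, which invokes the inverse $\widetilde\tau_1$, synchronized with the inverse recursion across $\Sh$ --- to be the only genuine obstacle; it is conceptually routine but must be written out with care.

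Granting the shuffle-compatibility at $n=m$ together with $\overline{\tau}_i(R(A))=R(A)$ and $\overline{\omega}_i(R(B))=R(B)$, we obtain $\overline{(\tau\bullet\omega)}_i\big(\Sh(R(A)\otimes R(B))\big)=\Sh\big(\overline{\tau}_i(R(A))\otimes\overline{\omega}_i(R(B))\big)=\Sh(R(A)\otimes R(B))$, so \Cref{twist-deg-1} produces a twisting system $\tau\bullet\omega:=\{\overline{(\tau\bullet\omega)}_i\}$ of $A\bullet B$ whose degree-one component is $\tau_i\otimes\omega_i$, proving the first assertion.

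For the second, recall from the proof of \Cref{prop:tau!} (after \cite[Lemma 5.1.1]{Mori-Smith2016}) that for any $m$-homogeneous algebra $C=\kk\langle C_1\rangle/(R(C))$ with twisting system $\sigma$ one has $C^{\sigma}=\kk\langle C_1\rangle/\big((\id\otimes\sigma_1^{-1}\otimes\cdots\otimes\sigma_{m-1}^{-1})(R(C))\big)$, a formula that uses only the degree-one restrictions $\sigma_j^{-1}$. Applying this to $C=A\bullet B$ and $\sigma=\tau\bullet\omega$, using $(\tau\bullet\omega)_j^{-1}=\tau_j^{-1}\otimes\omega_j^{-1}$ on $A_1\otimes B_1$ together with the elementary identity
\[
\big(\id\otimes(\tau_1^{-1}\otimes\omega_1^{-1})\otimes\cdots\otimes(\tau_{m-1}^{-1}\otimes\omega_{m-1}^{-1})\big)\circ\Sh
=\Sh\circ\big((\id\otimes\tau_1^{-1}\otimes\cdots\otimes\tau_{m-1}^{-1})\otimes(\id\otimes\omega_1^{-1}\otimes\cdots\otimes\omega_{m-1}^{-1})\big),
\]
which is immediate from the definition of $\Sh$, one sees that the relation space of $(A\bullet B)^{\tau\bullet\omega}$ equals $\Sh\big(R(A)^{\tau}\otimes R(B)^{\omega}\big)$, which is precisely the relation space of $A^{\tau}\bullet B^{\omega}$. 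Hence $(A\bullet B)^{\tau\bullet\omega}=A^{\tau}\bullet B^{\omega}$ as $m$-homogeneous algebras.
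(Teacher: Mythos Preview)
Your proposal is correct and follows essentially the same approach as the paper: extend $\tau_i\otimes\omega_i$ to the free algebra via \eqref{eq:inductive}, prove the shuffle-compatibility $\overline{(\tau\bullet\omega)}_i\circ\Sh=\Sh\circ(\overline{\tau}_i\otimes\overline{\omega}_i)$ by induction on the degree, conclude that the relation space $\Sh(R(A)\otimes R(B))$ is preserved and invoke \Cref{twist-deg-1}, and for the last isomorphism compare relation spaces using the formula $R(C^{\sigma})=(\id\otimes\sigma_1^{-1}\otimes\cdots\otimes\sigma_{m-1}^{-1})(R(C))$ together with the obvious commutation of $\Sh$ with slot-wise maps. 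Your treatment is in fact a bit more careful than the paper's in two places: you make explicit why $\overline{\tau}_i(R(A))=R(A)$ (via the observation that $\overline{\tau}_i$ descends to $\tau_i$), and you flag that the forward and inverse shuffle-compatibilities must be proved simultaneously because the recursion \eqref{eq:inductive} for the forward map invokes $\widetilde{\tau}_1$.
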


\begin{proof}
We construct $\tau \bullet \omega$ by extending $\tau\bullet \omega$ to the free algebra $\kk\langle A_1\otimes B_1\rangle$ as in \eqref{eq:inductive}. We claim that
\[
(\tau_i \bullet \omega_i) (\Sh(a \otimes b)) = \Sh(\tau_i(a) \otimes \omega_i(b)) 
\]
for all $i \in \mathbb Z$, and $a \in A, b \in B$ are of the same degree $n$. It is trivial for $n=0,1$. By induction on $n$, suppose it holds for $n\ge 1$. We now show it holds for $n+1$. Without loss of generality, we take $a=xa'$ and $b=yb'$ with $x\in A_1$, $a' \in (A_1)^{\otimes n}$ and $y\in B_1$, $b' \in (B_1)^{\otimes n}$.  Then we have
\begin{align*}
(\tau_i\bullet \omega_i)(\Sh(a\otimes b))&=(\tau_i\otimes \omega_i)(x\otimes y)\Sh(a'\otimes b')\\
&=(\tau_i\otimes \omega_i)(x\otimes y)(\tau_{i+1}\tau_1^{-1}\otimes \omega_{i+1}\omega_1^{-1})(\Sh(a'\otimes b'))\\
&=(\tau_i\otimes \omega_i)(x\otimes y)\Sh(\tau_{i+1}\tau_1^{-1}(a')\otimes \omega_{i+1}\omega_1^{-1}(b'))\\
&=\Sh(\tau_i(x)\tau_{i+1}\tau_1^{-1}(a')\otimes \omega_i(y)\omega_{i+1}\omega_1^{-1}(b'))\\
&=\Sh(\tau_i(a)\otimes \omega_i(b)).
\end{align*}
This proves our claim. Denote the degree $m$ relations of $A$ by $R$ and the degree $m$ relations of $B$ by $S$. In particular, we have
\[
(\tau_i\bullet \omega_i)(\Sh(R\otimes S))=\Sh((\tau_i(R)\otimes \omega_i(S))=\Sh(R\otimes S).
\]
According to \Cref{twist-deg-1}, we know $\tau\bullet \omega$ is a well-defined twisting system of $A\bullet B$.

We now check the final claim (compare with \cite[Lemma 3.1.1]{HNUVVW21}). Denote by $R^\tau$ and $S^\omega$ the relation spaces of $A^\tau$ and $B^\omega$, respectively. Recall that we have
$R^\tau = (\id \otimes \tau_1^{-1} \otimes \tau_2^{-1} \otimes\cdots \otimes\tau_{m-1}^{-1}) (R),$
and $S^\omega$ can be presented likewise. Then the relations of $A^\tau \bullet B^\omega$ are precisely
\begin{align*}
\Sh(R^\tau \otimes S^\omega) &= \Sh((\id \otimes \tau_1^{-1} \otimes \tau_2^{-1} \otimes\cdots \otimes \tau_{m-1}^{-1}) (R) \otimes (\id \otimes \omega_1^{-1} \otimes \omega_2^{-1} \otimes\cdots \otimes \omega_{m-1}^{-1}) (S))\\
&=(\id \otimes \id \otimes \tau_{1}^{-1} \otimes \omega_{1}^{-1} \otimes \tau_{2}^{-1} \otimes \omega_{2}^{-1} \otimes\cdots \otimes \tau_{m-1}^{-1} \otimes \omega_{m-1}^{-1} )(\Sh(R \otimes S)). 
\end{align*}
The last equality gives the relations of $(A \bullet B)^{\tau \bullet \omega}$. Thus, $(A \bullet B)^{ \tau \bullet \omega} \cong A^\tau \bullet B^\omega$ as $m$-homogeneous algebras.
\end{proof}

\begin{Cor}
Let $A$ be an $m$-homogeneous algebra with a twisting system $\tau=\{\tau_i: i\in \mathbb Z\}$. Then $\tau \bullet \tau^!$ is a twisting system of $\underline{\rm end}^r(A)$, and $\underline{\rm end}^r(A)^{\tau \bullet \tau^!} \cong A^\tau \bullet (A^!)^{\tau^!} \cong A^\tau \bullet (A^\tau)^! \cong \underline{\rm end}^r(A^\tau)$ as graded algebras.  
\end{Cor}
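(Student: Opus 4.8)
The plan is to assemble the corollary directly from the four preceding results, with essentially no new computation. First I would observe that, by \Cref{prop:twistingtau!}, the dual collection $\tau^!$ is a twisting system of the Koszul dual $A^!$. Since $A^!$ is again an $m$-homogeneous algebra, \Cref{bullet-twist} applies verbatim with $B = A^!$ and $\omega = \tau^!$: it produces a twisting system $\tau\bullet\tau^!$ of $A\bullet A^!$ whose degree-one component on $(A\bullet A^!)_1 \cong A_1\otimes A_1^*$ is $\tau_i\otimes \tau_i^!$, and it furnishes the graded algebra isomorphism
\[
(A\bullet A^!)^{\tau\bullet\tau^!}\;\cong\;A^\tau\bullet (A^!)^{\tau^!}.
\]
Invoking \Cref{lem:ManinM}(1), which identifies $\underline{\rm end}^r(A)$ with $A\bullet A^!$ as graded algebras (indeed as bialgebras), transports this twisting system to $\underline{\rm end}^r(A)$ and yields the first isomorphism in the asserted chain.

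Next I would apply \Cref{prop:tau!}, which says precisely $(A^!)^{\tau^!} = (A^\tau)^!$; substituting this into the displayed isomorphism gives $A^\tau\bullet (A^!)^{\tau^!}\cong A^\tau\bullet(A^\tau)^!$. Finally, since $A^\tau$ is itself an $m$-homogeneous algebra (its relations being $R^\tau = (\id\otimes\tau_1^{-1}\otimes\cdots\otimes\tau_{m-1}^{-1})(R)$, as recalled in the proof of \Cref{prop:tau!}), a second application of \Cref{lem:ManinM}(1) identifies $A^\tau\bullet(A^\tau)^!$ with $\underline{\rm end}^r(A^\tau)$. Concatenating the three isomorphisms produces
\[
\underline{\rm end}^r(A)^{\tau\bullet\tau^!}\;\cong\;A^\tau\bullet (A^!)^{\tau^!}\;\cong\;A^\tau\bullet(A^\tau)^!\;\cong\;\underline{\rm end}^r(A^\tau),
\]
and that $\tau\bullet\tau^!$ is a twisting system of $\underline{\rm end}^r(A)$ is part of the output of \Cref{bullet-twist}.

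The only point requiring a moment's care — and where I would be most attentive — is checking that the isomorphism $\underline{\rm end}^r(A)\cong A\bullet A^!$ of \Cref{lem:ManinM} really carries the degree-one generators $z_j^k = x_j\otimes x^k$ to the generators of $A_1\otimes A_1^*$ on the nose, so that the twisting system $\tau\bullet\tau^!$ pulls back to $\tau_i\otimes\tau_i^!$ on $\underline{\rm end}^r(A)_1$ and $\underline{\rm end}^r(A)^{\tau\bullet\tau^!}$ is literally the Zhang twist of $A\bullet A^!$. This is immediate from the explicit presentation of $A\bullet A^!$ recalled before the statement, so no genuine obstacle arises: the corollary is a formal consequence of \Cref{twist-deg-1}, \Cref{prop:twistingtau!}, \Cref{prop:tau!}, \Cref{lem:ManinM}, and \Cref{bullet-twist}.
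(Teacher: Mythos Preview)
Your proposal is correct and follows essentially the same route as the paper: the paper's proof simply says the corollary is a direct consequence of \Cref{bullet-twist} with $B=A^!$, together with \Cref{lem:ManinM}(1) and \Cref{prop:tau!}. Your additional explicit invocation of \Cref{prop:twistingtau!} (to ensure $\tau^!$ is a twisting system before applying \Cref{bullet-twist}) and your care about the generator identification are reasonable elaborations, but the underlying argument is identical.
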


\begin{proof}
This is a direct consequence of \Cref{bullet-twist} by letting $B=A^!$ and applying \Cref{lem:ManinM}(1) and \Cref{prop:tau!}. 
\end{proof}

%%%%%%%%%%%%%%%%%%%%%%%%%%%%%%%%%%%
%%%%%%%%%%%%%%%%%%%%%%%%%%%%%%%%%%%
\section{Systems of twisting functionals}
\label{sect:lintwist}
Throughout this section, let $B$ be a bialgebra satisfying the twisting conditions below.
\begin{defn}\cite[Definition B]{HNUVVW3}
\label{defn:twisting conditions}
A bialgebra $(B,M,u,\Delta,\vps)$ satisfies the \emph{twisting conditions} if 
\begin{enumerate}
\item[(\textbf{T1})] as an algebra $B=\bigoplus_{n\in \mathbb{Z}} B_n$ is $\mathbb{Z}$-graded, and
\item[(\textbf{T2})] the comultiplication satisfies $\Delta(B_n)\subseteq B_n\otimes B_n$ for all $n\in \mathbb Z$. 
\end{enumerate}
\end{defn}

Recall that the space of linear functionals $\Hom_\kk(B,\kk)$ on $B$ has an algebra structure under the \emph{convolution product} $*$ such that $f*g=(f\otimes g) \circ \Delta$ with unit $u\circ \vps$.

\begin{lemma}\label{Esigma}
Let $\alpha=\{\alpha_i: B \to \kk: i\in \mathbb Z\}$ be a collection of linear functionals on a bialgebra $B$ such that each $\alpha_i$ is convolution invertible with inverse denoted by $\alpha_i^{-1}$. Then the following conditions are equivalent for any homogeneous elements $a,b \in B$, where $a$ is of degree $j$ and $b$ is of any degree: 
\begin{itemize}
\item[(1)] $\sum \alpha_i(ab_1)\alpha_j(b_2)=\alpha_i(a)\alpha_{i+j}(b)$;
\item[(2)] $\alpha_i(ab)=\alpha_i(a)(\alpha_{i+j}*\alpha^{-1}_j)(b)$;
\item[(3)] $\alpha^{-1}_i(ab)=\alpha^{-1}_i(a)(\alpha_j*\alpha^{-1}_{i+j})(b)$;
\item[(4)] $\sum \alpha^{-1}_i(ab_1)\alpha_{i+j}(b_2)=\alpha^{-1}_i(a)\alpha_j(b)$.
\end{itemize}
\end{lemma}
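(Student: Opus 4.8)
The statement is a convolution-algebra analogue of the four equivalent twisting-system axioms (1)--(4) recalled at the start of Section~\ref{sect:genresults}, with ordinary products in $A$ replaced by convolution products in $\Hom_\kk(B,\kk)$ and the grading of $A$ replaced by the grading of $B$. The natural strategy is therefore to mimic the standard cyclic argument $(1)\Rightarrow(2)\Rightarrow(4)\Rightarrow(3)\Rightarrow(1)$ (or any convenient closed chain), translating each manipulation of homogeneous elements in $A$ into a manipulation using Sweedler notation and the grading hypothesis (\textbf{T2}) that $\Delta(B_n)\subseteq B_n\otimes B_n$. The key point that makes all of this work is that if $a\in B_j$ is homogeneous of degree $j$, then every term $a_1,a_2$ in $\Delta(a)=\sum a_1\otimes a_2$ again lies in $B_j$; this is what lets the subscript ``$j$'' appearing on the functionals in (1)--(4) be well-defined and lets the convolution-product identities be applied coefficientwise.

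\textbf{Key steps.} First, for $(1)\Rightarrow(2)$: start from $\alpha_i(ab)$, insert $\varepsilon$ on $b$ written as $\sum \alpha_j(b_1)\,\alpha_j^{-1}(b_2)$ (using that $\alpha_j*\alpha_j^{-1}=u\circ\varepsilon$ and that $b_1$ has the same degree as $b$, so $\alpha_j$ is the right index), apply coassociativity to write $\Delta^{(2)}(b)=\sum b_1\otimes b_2\otimes b_3$, and then apply hypothesis (1) to the pair $(a,b_1b_2)$-style grouping; carefully bookkeeping the Sweedler indices yields $\alpha_i(a)\sum\alpha_{i+j}(b_1)\alpha_j^{-1}(b_2)=\alpha_i(a)(\alpha_{i+j}*\alpha_j^{-1})(b)$. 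Second, $(2)\Leftrightarrow(3)$: this is the ``invert everything'' step --- apply $\alpha_i^{-1}$-type relations, or equivalently note that (2) says the map $b\mapsto(\text{something})$ intertwines convolution products, and convolution-invert; concretely, from $\alpha_i*\alpha_i^{-1}=u\circ\varepsilon$ one derives $\alpha_i^{-1}(ab)$ by expanding $\varepsilon(ab)=\varepsilon(a)\varepsilon(b)$ and substituting (2). Third, $(3)\Rightarrow(4)$ (and symmetrically $(2)\Rightarrow(1)$): this is the reverse of the first step, re-expanding the convolution product $\alpha_j*\alpha_{i+j}^{-1}$ via $\Delta$ and regrouping. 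Finally close the loop with $(4)\Rightarrow(1)$, again by the invert-everything manipulation, to obtain the full equivalence. Throughout, only hypothesis (\textbf{T2}) (homogeneity of comultiplication) and the associativity/coassociativity/counit axioms are used; hypothesis (\textbf{T1}) is needed only to make sense of the degree subscript $j$.

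\textbf{Main obstacle.} The only real difficulty is notational: keeping the Sweedler indices and the functional subscripts $i,j,i+j$ aligned correctly when passing between a product like $\alpha_i(ab_1)\alpha_j(b_2)$ and a convolution product $(\alpha_{i+j}*\alpha_j^{-1})(b)$, since one must repeatedly use coassociativity to split $\Delta$ and the grading to justify which index attaches to which Sweedler component. There is no conceptual obstruction --- each implication is a direct transcription of the corresponding implication among the classical twisting-system axioms (1)--(4), now read in the convolution algebra $(\Hom_\kk(B,\kk),*)$ --- so I would present one implication in full detail (say $(1)\Rightarrow(2)$) and indicate that the remaining ones follow by entirely analogous Sweedler-notation computations, invoking convolution-invertibility of each $\alpha_i$ for the ``$(-)^{-1}$'' statements.
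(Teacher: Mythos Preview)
Your proposal is correct and follows essentially the same approach as the paper's proof: insert a counit written as $\alpha_j*\alpha_j^{-1}$ (or $\alpha_{i+j}*\alpha_{i+j}^{-1}$), use coassociativity, and regroup. The paper organizes the equivalences slightly differently---it proves $(1)\Leftrightarrow(2)$ and $(3)\Leftrightarrow(4)$ by exactly the counit-insertion trick you describe, then handles $(2)\Leftrightarrow(3)$ directly via a single Sweedler computation rather than routing through $(4)$---but the manipulations are the same. Your remark that (\textbf{T2}) is what makes the degree subscript on each Sweedler component well-defined is exactly the implicit point underlying the paper's computations; note in particular that the step $(2)\Rightarrow(3)$ requires applying $(2)$ to $a_2b_2$, so one needs $|a_2|=j$, which is where (\textbf{T2}) enters.
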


\begin{proof} We use the properties of the counit $\vps$ to show the equivalence below.\\
(1)$\Rightarrow$(2): 
\begin{align*}
\alpha_i(ab)&=\sum\alpha_i(ab_1)\vps(b_2)=\sum \alpha_i(ab_1)\alpha_j(b_2)\alpha^{-1}_j(b_3) \\
&=\sum \alpha_i(a)\alpha_{i+j}(b_1)\alpha^{-1}_j(b_2) =\alpha_i(a)(\alpha_{i+j}*\alpha^{-1}_j)(b).
\end{align*}
(2)$\Rightarrow$(1): 
\begin{align*}
\sum\alpha_i(ab_1)\alpha_j(b_2)&=\sum \alpha_i(a)(\alpha_{i+j}*\alpha^{-1}_j)(b_1)\alpha_j(b_2) \\ 
&=\sum \alpha_i(a)\alpha_{i+j}(b_1)\alpha^{-1}_j(b_2)\alpha_j(b_3) =\alpha_i(a)\alpha_{i+j}(b).
\end{align*}
We can show that (3)$\Leftrightarrow$(4) similarly. \\
(2)$\Rightarrow$(3): 
\begin{align*}
\alpha^{-1}_i(ab)&=\sum\alpha^{-1}_i(a_3)\alpha_i(a_2)\alpha^{-1}_i(a_1b_1)(\alpha_{i+j}*\alpha^{-1}_j)(b_2)(\alpha_j*\alpha^{-1}_{i+j})(b_3)\\
&=\sum \alpha^{-1}_i(a_3)\alpha_i(a_2)(\alpha_{i+j}*\alpha^{-1}_j)(b_2)\alpha^{-1}_i(a_1b_1)(\alpha_j*\alpha^{-1}_{i+j})(b_3)\\
&=\sum \alpha^{-1}_i(a_3)\alpha_i(a_2b_2)\alpha^{-1}_i(a_1b_1)(\alpha_j*\alpha^{-1}_{i+j})(b_3)\\
&=\alpha^{-1}_i(a)(\alpha_j*\alpha^{-1}_{i+j})(b).
\end{align*}
(3)$\Rightarrow$(2): 
\begin{align*}
\alpha_i(ab)&=\sum\alpha_i(a_3)\alpha^{-1}_i(a_2)\alpha_i(a_1b_1)(\alpha_{j}*\alpha^{-1}_{i+j})(b_2)(\alpha_{i+j}*\alpha^{-1}_{j})(b_3)\\
&=\sum \alpha_i(a_3)\alpha^{-1}_i(a_2)(\alpha_{j}*\alpha^{-1}_{i+j})(b_2)\alpha_i(a_1b_1)(\alpha_{i+j}*\alpha^{-1}_{j})(b_3)\\
&=\sum \alpha_i(a_3)\alpha^{-1}_i(a_2b_2)\alpha_i(a_1b_1)(\alpha_{i+j}*\alpha^{-1}_{j})(b_3)\\
&=\alpha_i(a)(\alpha_{i+j}*\alpha^{-1}_{j})(b).
\end{align*}
\end{proof}

\begin{defn}
A collection of linear functionals $\alpha=\{\alpha_i: B \to \kk\}_{i\in \mathbb Z}$ on a bialgebra $B$ is called a \emph{system of twisting functionals on $B$} if each $\alpha_i$ satisfies the following:
\begin{itemize}
\item[(1)] $\alpha_i$ is convolution invertible with inverse $\alpha^{-1}_i$;
\item[(2)] $\sum \alpha_i(ab_1)\alpha_j(b_2)=\alpha_i(a)\alpha_{i+j}(b)$, for $a \in B$ is of degree $j$ and $b \in B$ is homogeneous of any degree; 
\item[(3)] $\alpha_i(1)=1$; and
\item[(4)] $\alpha_0=\vps$, the counit of $B$. 
\end{itemize}
\end{defn}

Before we provide an example of a system of twisting functionals, we need the following notions. For any linear map $\pi: B \to \kk$, we define a linear map $\Xi^l[\pi]: B \to B$ via 
\[\Xi^l[\pi] = M \circ (\pi \otimes \id) \circ \Delta, \quad \text{that is,} \quad \Xi^l[\pi](b)=\sum \pi(b_1)b_2, \quad \text{for } b \in B,\] 
where $M$ denotes the multiplication map. We call $\Xi^l[\pi]$ a \emph{left linear winding map}, since it extends the notion of \emph{left winding endomorphism} in \cite[\S 2]{BZ08}. 
Similarly, the \emph{right linear winding map} $\Xi^r[\pi]$ is defined by
\[\Xi^r[\pi] = M \circ (\id \otimes \pi) \circ \Delta, \quad \text{that is,} \quad \Xi^r[\pi](b)=\sum b_1\pi(b_2), \quad \text{for } b \in B.\]

If in addition $\pi: B\to \kk$ is convolution invertible with inverse $\pi^{-1}: B\to \kk$, one can check that the linear inverse of $\Xi^{l}[\pi]$ is $(\Xi^{l}[\pi])^{-1}=\Xi^{l}[ \pi^{-1}]$, making $\Xi^{l}[\pi]$ a bijective linear winding map. Analogously, $\Xi^r[\pi]$ is also a bijective linear winding map with linear inverse $\Xi^r[\pi^{-1}]$. 

\begin{Ex}
Let $B$ be a Hopf algebra with antipode $S$. Let $\phi: B\to \kk$ be any algebra map. The convolution inverse of $\phi$ is $\phi^{-1}=\phi\circ S$. Consider $\alpha=\{\alpha_i\}_{i\in \mathbb Z}$ where $\alpha_i=\phi *\cdots * \phi$ is the $i$th product of $\phi$ with itself with respect to the convolution product in $\Hom_\kk(B,\kk)$. It is straightforward to check that $\alpha$ is a system of twisting functionals on $B$. Moreover, the associated twisting system $\tau=\{\tau_i\}_{i\in \mathbb Z}$ where $\tau_i=\Xi^r(\alpha_i)=(\Xi^r(\phi))^i$ is the twisting system given by the right bijective linear winding map associated with $\phi$.  
\end{Ex}

Recall that the \emph{Hopf envelope} of a bialgebra $B$ is the unique Hopf algebra $\mathcal H(B)$ together with a bialgebra map $\iota_B: B\to \mathcal H(B)$ satisfying the following universal property: for any bialgebra map $f: B\to K$ where $K$ is another Hopf algebra,  there is a unique Hopf algebra map $g: \mathcal H(B)\to K$ such that  $f=g\circ \iota_B$. It is proved in \cite[Lemma 2.1.10]{HNUVVW21} that if $B$ satisfies the twisting conditions in \Cref{defn:twisting conditions} then so does $\mathcal H(B)$, and additionally $S(\mathcal H(B)_n)\subseteq \mathcal H(B)_{-n}$, for any $n \in \mathbb Z$. 
 
Now, we construct explicitly the Hopf envelope $\mathcal H(B)$ as in \cite[Theorem 2.6.3]{Pareigis} and \cite{Porst2012}, which grew out of Takeuchi's construction for coalgebras \cite{Takeuchi1971}. Consider a presentation $B\cong\kk\langle V\rangle/(R)$ as graded algebras, where $V$ is a subcoalgebra of $B$. We can extend the comultiplication $\Delta$ and counit $\vps$ to the free algebra $\kk\langle V\rangle $ as algebra maps, where $(R)$ is a homogeneous bi-ideal of $\kk\langle V\rangle$. In this case, $B$ satisfies the twisting conditions. Denote infinitely many copies of the generating space $V$ as $\{V^{(k)}=V\}_{k\ge 0}$ and consider  
\begin{equation}\label{eq:freeT}
T:=\kk\langle\, \oplus_{k \ge 0} V^{(k)}\rangle.
\end{equation}
Let $S$ be the anti-algebra map on $T$ with $S(V^{(k)})=V^{(k+1)}$ for any $k \ge 0$. Both algebra maps $\Delta: \kk\langle V\rangle \to \kk\langle V\rangle \otimes \kk\langle V\rangle$ and $\vps: \kk\langle V\rangle\to \kk$ extend uniquely to $T$ as algebra maps via identities $(S\otimes S)\circ \Delta=\Delta \circ S$ and $\vps\circ S=\vps$, which we still denote by $\Delta: T\to T\otimes T$ and $\vps: T\to \kk$. The Hopf envelope of $B$ has a presentation
\[
\mathcal H(B)=T/W,   
\]
where the ideal $W$ is generated by
\begin{equation}\label{eq:RHB}
 S^k(R), \quad (M \circ (\id \otimes S) \circ \Delta-u \circ \vps)(V^{(k)}), \quad \text{and} \quad (M \circ (S\otimes \id) \circ \Delta-u \circ \vps)(V^{(k)}),\quad \text{for all } k \ge 0.   
\end{equation}
One can check that $W$ is a Hopf ideal of $T$, and so the Hopf algebra structure maps $\Delta$, $\vps$, and $S$ of $T$ give a Hopf algebra structure on $\mathcal H(B)=T/W$. Finally, the natural bialgebra map $\iota_B: B\to \mathcal H(B)$ is given by the natural embedding $\kk\langle V\rangle \hookrightarrow T$ by identifying $V=V^{(0)}$.

Suppose $B=\kk\langle V\rangle/(R)$ and $\alpha:=\{\alpha_i: V\to \kk\}_{i \in \mathbb{Z}}$ is a collection of linear functionals (with $\alpha_0=\varepsilon$) on the subcoalgebra $V$ with convolution inverses $\alpha^{-1}:= \{\alpha_i^{-1}: V\to \kk\}_{i\in \mathbb Z}$. We extend each $\alpha_i$ and $\alpha_i^{-1}$ (which we denote again as $\alpha_i$ and $\alpha^{-1}_i$, by abuse of notation) to $\kk\langle V\rangle$ inductively by the rules 
\begin{equation}\label{eq:inductivelinear}
\alpha_i(1)=\alpha_i^{-1}(1)=1, \qquad \alpha_i(ab):= \alpha_i(a) (\alpha_{i+1}* \alpha_1^{-1})(b),
\qquad \text{and} \qquad
\alpha_i^{-1}(ab):=\alpha_i^{-1}(a)(\alpha_1*\alpha_{i+1}^{-1})(b),  
\end{equation}
for any $a\in V$ and $b\in V^{\otimes n}$ for $n\ge 1$. We leave the proof of the following result to the reader as it is similar to the proof of \Cref{twist-deg-1}.

\begin{proposition}
    \label{twistL-deg-1}
    Retain the above notation. If $\alpha_i(R) =0$ for all $i \in \mathbb{Z}$, then $\alpha_i$ and $\alpha_i^{-1}$, defined in \eqref{eq:inductivelinear}, are well-defined linear functionals on $B$ that are convolution inverse to each other. Moreover, $\alpha=\{\alpha_i: i \in \mathbb Z\}$ is a system of twisting functionals on $B$. 
\end{proposition}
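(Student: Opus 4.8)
The plan is to follow the proof of \Cref{twist-deg-1} almost verbatim, systematically replacing the composition products of the bijective maps $\tau_i,\widetilde\tau_i$ by convolution products of the functionals $\alpha_i,\alpha_i^{-1}$, and replacing the set-theoretic invariance $\tau_i(R)=R$ by the annihilation $\alpha_i(R)=0$. The structural facts that make this run are that on the free algebra $\kk\langle V\rangle$ both $\Delta$ and $\varepsilon$ are \emph{algebra} maps, so $\Delta(ab)=\sum a_1b_1\otimes a_2b_2$, and that they are graded, so $\Delta(\kk\langle V\rangle_n)\subseteq \kk\langle V\rangle_n\otimes\kk\langle V\rangle_n$; in particular $\kk\langle V\rangle_m=V^{\otimes m}$ is a subcoalgebra.

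\emph{Step 1 (the free algebra).} By one induction on total degree $n$, I would prove that on $\kk\langle V\rangle$ the extended functionals satisfy $\alpha_i*\alpha_i^{-1}=\varepsilon=\alpha_i^{-1}*\alpha_i$ and the ``twisting identities'' $\alpha_i(ab)=\alpha_i(a)\,(\alpha_{i+j}*\alpha_j^{-1})(b)$ and $\alpha_i^{-1}(ab)=\alpha_i^{-1}(a)\,(\alpha_j*\alpha_{i+j}^{-1})(b)$, for all $i\in\mathbb Z$ and all homogeneous $a$ of degree $j$ and $b$ of arbitrary degree with $j+|b|\leq n$. For a degree-one generator $a\in V$ the twisting identities are precisely the defining formulas \eqref{eq:inductivelinear}; for $j\geq 2$ one writes $a=a_1a_2$ with $a_1\in V$, expands $(\alpha_{i+1}*\alpha_1^{-1})(a_2b)=\sum\alpha_{i+1}((a_2)_1b_1)\,\alpha_1^{-1}((a_2)_2b_2)$ (using that $\Delta$ is an algebra map), applies the inductive hypothesis to each factor, and collapses the resulting convolution string via $\alpha_{j-1}^{-1}*\alpha_{j-1}=\varepsilon$ on the lower-degree middle tensorands --- the exact analogue of the cancellation $\widetilde\tau_{j-1}\tau_{j-1}=\id$ in \Cref{twist-deg-1}. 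The identity $\alpha_i*\alpha_i^{-1}=\varepsilon$ is obtained the same way, reducing to degree one and using $\alpha_1^{-1}*\alpha_1=\varepsilon$ and $\alpha_{i+1}*\alpha_{i+1}^{-1}=\varepsilon$ in lower degree together with the fact that $\varepsilon$ is an algebra map. None of this uses $\alpha_i(R)=0$, and it is the bulk of the (routine but bookkeeping-heavy) work.

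\emph{Step 2 (descent to $B$).} Next I would show, by induction on degree, that $\alpha_i$ and $\alpha_i^{-1}$ annihilate the homogeneous ideal $(R)\subseteq\kk\langle V\rangle$, so that they descend to $B$. For $n>m$ one has $(R)_n=V\cdot(R)_{n-1}+(R)_{n-1}\cdot V$ (pull out the first, respectively last, letter of a generator $w_1 r w_2$): on $v\cdot r'$ with $v\in V$ and $r'\in (R)_{n-1}$, apply \eqref{eq:inductivelinear} and use that $(R)$ is a coideal, so $\Delta(r')\in (R)_{n-1}\otimes\kk\langle V\rangle_{n-1}+\kk\langle V\rangle_{n-1}\otimes (R)_{n-1}$, together with the inductive hypothesis; on $r'\cdot v$, apply the twisting identity of Step 1 and the inductive hypothesis; the same works for $\alpha_i^{-1}$. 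The base case $n=m$ carries the one genuinely new idea: $\alpha_i(R)=0$ is the hypothesis, but one also needs $\alpha_i^{-1}(R)=0$. For this, note that $R$ is a coideal of the subcoalgebra $\kk\langle V\rangle_m=V^{\otimes m}$ (intersect the bi-ideal property of $(R)$ with the degree-$m$ component), that $\alpha_i^{-1}|_{V^{\otimes m}}$ is the convolution inverse of $\alpha_i|_{V^{\otimes m}}$ in $(V^{\otimes m})^*$ by Step 1, and that the convolution inverse of a convolution-invertible functional annihilating a coideal again annihilates that coideal --- reduce to finite-dimensional subcoalgebras and use that a unital subalgebra of a finite-dimensional algebra is closed under taking inverses in the ambient algebra (Cayley--Hamilton). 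This replaces the trivial implication $\tau_i(R)=R\Rightarrow\tau_i^{-1}(R)=R$ of \Cref{twist-deg-1}, and I expect it to be the main obstacle.

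\emph{Step 3 (conclusion).} On $B$, the descended identity $\alpha_i*\alpha_i^{-1}=\varepsilon=\alpha_i^{-1}*\alpha_i$ shows each $\alpha_i$ is convolution invertible with inverse $\alpha_i^{-1}$; the conditions $\alpha_i(1)=1$ and $\alpha_0=\varepsilon$ hold by construction; and, since homogeneous elements of $B$ lift to homogeneous elements of $\kk\langle V\rangle$, the descended twisting identity $\alpha_i(ab)=\alpha_i(a)(\alpha_{i+j}*\alpha_j^{-1})(b)$ gives, via the equivalence (1)$\Leftrightarrow$(2) of \Cref{Esigma}, that $\sum\alpha_i(ab_1)\alpha_j(b_2)=\alpha_i(a)\alpha_{i+j}(b)$ for homogeneous $a$ of degree $j$ and homogeneous $b$. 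Hence $\alpha=\{\alpha_i:i\in\mathbb Z\}$ is a system of twisting functionals on $B$, as claimed.
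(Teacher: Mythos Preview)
Your proposal is correct and follows exactly the route the paper indicates: the paper gives no proof at all, simply stating that it ``is similar to the proof of \Cref{twist-deg-1}'' and leaving it to the reader, which is precisely what you carry out. You have moreover correctly isolated and filled in the one genuinely new ingredient that the translation requires and that the paper passes over in silence, namely that $\alpha_i^{-1}(R)=0$ must be deduced from the hypothesis $\alpha_i(R)=0$ (whereas in \Cref{twist-deg-1} the analogue $\tau_i^{-1}(R)=R$ is immediate from bijectivity); your coideal plus Cayley--Hamilton argument in the finite-dimensional dual $(V^{\otimes m})^*$ is the right way to handle this, and is indeed needed, since the inductive formula for $\alpha_i$ involves $\alpha_1^{-1}$ and the coideal decomposition of $\Delta(r)$ forces one tensorand into $R$ on the ``wrong'' side.
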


When $B$ is a Hopf algebra, our next result shows how twisting functionals are valued at the antipodes.
 
\begin{lemma}
\label{lem:inverseSLinear}
Let $H$ be a Hopf algebra satisfying the twisting conditions. Let $\alpha=\{\alpha_i: i \in \mathbb Z\}$ be a system of twisting functionals on $H$ with convolution inverse $ \alpha^{-1}=\{\alpha_i^{-1}: i\in \mathbb Z\}$. For any $i \in \mathbb Z$, any homogeneous element $a\in H$ of degree $j$, and any $k\ge 0$, we have:
\begin{align}\label{eq:sigmaS}
  \alpha_i(S^k(a))=
  \begin{cases}
  \alpha_i(a), & \text{$k$ is even}\\
  (\alpha_{-j}*\alpha^{-1}_{i-j})(a), & \text{$k$ is odd}
  \end{cases}
  \quad \text{and}\quad 
    \alpha^{-1}_i(S^k(a))=
  \begin{cases}
  \alpha^{-1}_i(a), & \text{$k$ is even}\\
  (\alpha_{i-j}*\alpha^{-1}_{-j})(a), & \text{$k$ is odd}.
  \end{cases}
  \end{align} 
\end{lemma}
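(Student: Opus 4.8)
The plan is to prove both formulas simultaneously by induction on $k$, reducing everything to the case $k=1$ via the relation $S^{k} = S \circ S^{k-1}$ and the fact (noted just before the statement, from \cite[Lemma 2.1.10]{HNUVVW21}) that $H$ satisfies the twisting conditions, so $S(H_n) \subseteq H_{-n}$; thus if $a$ is homogeneous of degree $j$, then $S^k(a)$ is homogeneous of degree $\pm j$ and all the functionals $\alpha_i$, $\alpha_i^{-1}$ are evaluated on homogeneous elements of a definite degree. The even case is then immediate once the odd case is known: if $k$ is even, $S^k(a) = S(S^{k-1}(a))$ with $k-1$ odd and $S^{k-1}(a)$ homogeneous of degree $-j$, so applying the odd case (with $j$ replaced by $-j$) to $\alpha_i(S^k(a))$ gives $(\alpha_{j} * \alpha_{i+j}^{-1})(S^{k-1}(a))$, and one unwinds this convolution using the odd case again on each Sweedler factor; alternatively one checks directly that the odd formula applied twice collapses to $\alpha_i(a)$ using that $\alpha$ is a system of twisting functionals. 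So the crux is the base case $k=1$.

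For the base case, I would compute $\alpha_i(S(a))$ using the defining antipode identity $\sum a_1 S(a_2) = \varepsilon(a) 1 = \sum S(a_1) a_2$ together with the twisting functional axiom (2) from \Cref{Esigma}, namely $\alpha_i(xy) = \alpha_i(x)(\alpha_{i+|x|} * \alpha_{|x|}^{-1})(y)$, and its variants (1), (3), (4). Concretely, start from $\alpha_i(S(a)) = \sum \alpha_i(S(a_2)) \varepsilon(a_1)$ and insert $\varepsilon(a_1) = \sum \alpha_{-j}(a_{1}) \alpha_{-j}^{-1}(a_{2})$ or a similarly chosen pair $\alpha_?(a_1)\alpha_?^{-1}(a_3)$ designed so that when combined with $\alpha_i(S(a_{\text{last}}))$ one can recognize a term of the form $\alpha_?(\text{something}\cdot S(\text{something}))$ and apply the antipode axiom to contract it. Because $a_1$ has degree $j$ (comultiplication preserves degree by (\textbf{T2})) and $S(a_2)$ has degree $-j$, the degree bookkeeping in axioms (1)--(4) determines which indices must appear; matching these forces the answer to be $(\alpha_{-j} * \alpha_{i-j}^{-1})(a)$. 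The computation for $\alpha_i^{-1}(S(a))$ is the mirror image, using axiom (3)/(4) in place of (1)/(2), and yields $(\alpha_{i-j} * \alpha_{-j}^{-1})(a)$.

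The main obstacle I anticipate is getting the Sweedler-notation bookkeeping exactly right in the base case: one must choose the correct insertion of a convolution-inverse pair and the correct variant of the twisting axiom so that the antipode identity applies cleanly, and keep careful track of which graded degree each Sweedler component carries (this is where (\textbf{T1})--(\textbf{T2}) and $S(H_n)\subseteq H_{-n}$ are essential — without the grading hypotheses the indices $-j$, $i-j$ would not be pinned down). A secondary, more routine obstacle is verifying in the inductive step that applying the odd-case formula twice genuinely collapses back to the even-case formula; this is a short convolution-product manipulation using associativity of $*$ and the defining properties $\alpha_0 = \varepsilon$, $\alpha_i * \alpha_i^{-1} = u \circ \varepsilon$, but it requires care with the indices shifting by the degree $j$ at each application. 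Once the base case is established, the induction is essentially formal.
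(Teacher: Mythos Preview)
Your proposal is correct and follows essentially the same route as the paper: induction on $k$, with the $k=1$ case handled by inserting a convolution-inverse pair (the paper uses $(\alpha_{i-j}*\alpha^{-1}_{-j})*(\alpha_{-j}*\alpha^{-1}_{i-j})=\varepsilon$) so that \Cref{Esigma}(2) contracts $\alpha_i(S(a_1))(\alpha_{i-j}*\alpha^{-1}_{-j})(a_2)$ into $\alpha_i(S(a_1)a_2)$, after which the antipode identity collapses everything; the inductive step is exactly the ``apply the odd formula and unwind'' computation you describe, with the anti-coalgebra property of $S$ swapping Sweedler factors for odd $k$.
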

\begin{proof}
For any $i \in \mathbb Z$, we proceed by induction on $k$. If $k=0$, the statement is trivial. When $k=1$, we have
  \begin{align*}
  \alpha_i(S(a))&=\sum \alpha_i(S(a_1))(\alpha_{i-j}*\alpha^{-1}_{-j})(a_2)(\alpha_{-j}*\alpha^{-1}_{i-j})(a_3)\\
  &=\alpha_i\left(\sum S(a_1)a_2\right)(\alpha_{-j}*\alpha^{-1}_{i-j})(a_3)\\
  &=(\alpha_{-j}*\alpha^{-1}_{i-j})(a)
  \end{align*}
  and 
    \begin{align*}
  \alpha^{-1}_i(S(a))&=\sum \alpha^{-1}_i(S(a_1))(\alpha_{-j}*\alpha^{-1}_{i-j})(a_2)(\alpha_{i-j}*\alpha^{-1}_{-j})(a_3)\\
  &=\alpha^{-1}_i\left(\sum S(a_1)a_2\right)(\alpha_{i-j}*\alpha^{-1}_{-j})(a_3)\\
  &=(\alpha_{i-j}*\alpha^{-1}_{-j})(a).
  \end{align*}
  Inductively for $\alpha_i(S^{k+1}(a))$, we have
  \[
  \alpha_i(S^{k+1}(a))=\alpha_iS(S^k(a))=(\alpha_{-j}*\alpha^{-1}_{i-j})(S^k(a))=\sum \alpha_{-j}(S^k(a_1))\alpha^{-1}_{i-j}(S^k(a_2))=(\alpha_{-j}*\alpha^{-1}_{i-j})(a),
\] for even $k$, and 
\begin{align*}
\alpha_i(S^{k+1}(a))&=\alpha_iS(S^k(a))=(\alpha_{j}*\alpha^{-1}_{i+j})(S^k(a))=\sum \alpha_{j}(S^k(a_2))\alpha^{-1}_{i+j}(S^k(a_1))\\
&=\sum (\alpha_{-j}*\alpha^{-1}_0)(a_2)(\alpha_i*\alpha^{-1}_{-j})(a_1)=\alpha_i(a),
\end{align*} for odd $k$.
Similarly, we can prove for $\alpha^{-1}_i(S^{k+1}(a))$.
\end{proof}

\begin{proposition}
\label{LiftLTS}
Let $B$ be a bialgebra satisfying the twisting conditions. Then any system of twisting functionals on $B$ can be extended uniquely to a system of twisting functionals on its Hopf envelope $\mathcal H(B)$. Moreover, any system of twisting functionals on $\mathcal H(B)$ is obtained from some system of twisting functionals on $B$ in such a way.
\end{proposition}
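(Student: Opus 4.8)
The plan is to construct the extension explicitly from the presentation $\mathcal H(B)=T/W$ recalled above, using \Cref{twistL-deg-1} and \Cref{lem:inverseSLinear}, and then to show uniqueness forces it. Write $B=\kk\langle V\rangle/(R)$ with $V$ the generating subcoalgebra; by \eqref{eq:inductivelinear} and \Cref{twistL-deg-1} a system of twisting functionals $\alpha=\{\alpha_i\}$ on $B$ is determined by $\alpha|_V$ and satisfies $\alpha_i(R)=0$. First I would define candidate functionals on the generating subcoalgebra $\bigoplus_{k\ge 0}V^{(k)}$ of $T$ by $\hat\alpha_i|_{V^{(0)}}=\alpha_i$ and, for homogeneous $v\in V$,
\[
\hat\alpha_i(S^k(v))=\begin{cases}\alpha_i(v),& k\text{ even},\\ \bigl(\alpha_{-|v|}*\alpha_{i-|v|}^{-1}\bigr)(v),& k\text{ odd},\end{cases}
\]
together with the analogous formula for $\hat\alpha_i^{-1}$ dictated by \Cref{lem:inverseSLinear}; these are precisely the values any system of twisting functionals on $\mathcal H(B)$ extending $\alpha$ is forced by \Cref{lem:inverseSLinear} to take. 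Using that each $V^{(k)}$ is a subcoalgebra of $T$ — co-opposite to $V$ when $k$ is odd, since $S$ is an anti-algebra map — together with $\alpha_i*\alpha_i^{-1}=\varepsilon=\alpha_0$, one checks that $\hat\alpha_i^{-1}$ is convolution-inverse to $\hat\alpha_i$, that $\hat\alpha_0=\varepsilon$, and that $\hat\alpha_i(1)=1$. Extending $\hat\alpha_i,\hat\alpha_i^{-1}$ to all of $T$ by the inductive rules \eqref{eq:inductivelinear} and applying \Cref{twistL-deg-1} to the free algebra $T$ (which has no relations) shows $\hat\alpha$ is a system of twisting functionals on $T$.

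The main work is to show $\hat\alpha_i$ annihilates $W$, which by \Cref{twistL-deg-1} reduces to annihilating the three families of generators in \eqref{eq:RHB}. The key lemma I would first establish, by induction on word length in $T$, is the $S$-compatibility $\hat\alpha_i(S(w))=\bigl(\hat\alpha_{-|w|}*\hat\alpha_{i-|w|}^{-1}\bigr)(w)$ and $\hat\alpha_i^{-1}(S(w))=\bigl(\hat\alpha_{i-|w|}*\hat\alpha_{-|w|}^{-1}\bigr)(w)$ for homogeneous $w\in T$: the base case is the definition on generators, and the inductive step uses that $S$ is an anti-algebra map, the co-opposite behaviour of $\Delta$ under $S$, and the twisting axioms of \Cref{Esigma} for $\hat\alpha$ on $T$. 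Iterating gives $\hat\alpha_i\circ S^k=\hat\alpha_i$ for $k$ even and $\hat\alpha_i(S^k(w))=\bigl(\hat\alpha_{-|w|}*\hat\alpha_{i-|w|}^{-1}\bigr)(w)$ for $k$ odd. Then $\hat\alpha_i(S^k(R))=0$: for $k$ even this is $\hat\alpha_i(R)=\alpha_i(R)=0$ since $\hat\alpha_i|_{\kk\langle V\rangle}$ is the lift of $\alpha_i$, which annihilates the bi-ideal $(R)$; for $k$ odd it is $\sum\hat\alpha_{-m}(r_1)\hat\alpha_{i-m}^{-1}(r_2)$, and every term vanishes because $(R)$ is a bi-ideal of $\kk\langle V\rangle$, so $\Delta(r)\in(R)\otimes\kk\langle V\rangle+\kk\langle V\rangle\otimes(R)$. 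Finally, $\hat\alpha_i$ kills the two antipode relations on each $V^{(k)}$ by a direct expansion with \eqref{eq:inductivelinear}, the co-opposite coproduct, and the $S$-compatibility formulas, which collapses to $\varepsilon(v)$ via repeated cancellation $\alpha*\alpha^{-1}=\varepsilon$ — essentially the computation in the proof of \Cref{lem:inverseSLinear} run backwards. Hence $\hat\alpha_i(W)=0$ and $\hat\alpha$ descends to a system of twisting functionals on $\mathcal H(B)$ restricting to $\alpha$ along $\iota_B$.

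For uniqueness, $\mathcal H(B)$ is generated as an algebra by $\bigcup_{k\ge 0}S^k(\iota_B(V))$, and by \eqref{eq:inductivelinear} a system of twisting functionals is determined by its restriction to any generating subcoalgebra; that restriction must agree with $\alpha$ on $\iota_B(V)$, and \Cref{lem:inverseSLinear} then pins down the values on each $S^k(\iota_B(V))$, so two extensions of $\alpha$ coincide. For the converse, given a system of twisting functionals $\beta=\{\beta_i\}$ on $\mathcal H(B)$, the pullbacks $\{\beta_i\circ\iota_B\}$ form a system of twisting functionals on $B$: all four axioms are inherited because $\iota_B$ is an injective graded bialgebra map (invertibility uses that $\iota_B$ is a coalgebra map, the twisting identity that it is a graded algebra map, and $\beta_0\circ\iota_B=\varepsilon$). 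By the uniqueness just shown, the extension of $\{\beta_i\circ\iota_B\}$ is $\beta$, so every system of twisting functionals on $\mathcal H(B)$ arises in the prescribed way.

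The step I expect to be the main obstacle is the $S$-compatibility induction together with the antipode-relation computation: keeping the co-opposite coproduct on the odd copies $V^{(k)}$, the parity-dependent formulas for $\hat\alpha_i$ and $\hat\alpha_i^{-1}$, and the degree shifts in \eqref{eq:inductivelinear} mutually consistent so that all the convolution expressions collapse. Each calculation is routine in isolation, but the index and sign bookkeeping is delicate — for instance, performing the antipode-relation expansion with the un-flipped coproduct on $T$ would yield a spurious factor $\alpha_i*\alpha_{-1}*\alpha_i^{-1}*\alpha_{-1}^{-1}$ instead of $\varepsilon$ — so fixing the coalgebra conventions correctly at the outset is essential.
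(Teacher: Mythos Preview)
Your proposal is correct and follows essentially the same route as the paper: define $\hat\alpha$ on the generators $\bigoplus_{k\ge 0}V^{(k)}$ by the formulas forced by \Cref{lem:inverseSLinear}, extend to $T$ via the inductive rules, verify vanishing on the generators of $W$, and deduce uniqueness from \Cref{lem:inverseSLinear}. The only organizational differences are that you isolate the $S$-compatibility identity on all of $T$ as an explicit lemma (the paper invokes \eqref{eq:sigmaS} directly on $S^k(R)$ without stating this step separately), while the paper carries out the propagation from generators to the full ideal $W$ by an explicit induction on $p+q$ using the one-sided coideal property $\Delta(I_{p,q})\subseteq T\otimes I_{p,q}$ rather than citing \Cref{twistL-deg-1}; your remark that $\iota_B$ is injective is unnecessary for the converse, since pullback along any graded bialgebra map preserves the twisting-functional axioms.
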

\begin{proof}
Let $\alpha=\{\alpha_i\}_{i\in \mathbb Z}$ be a system of twisting functionals on $B$.
We use the presentation of $\mathcal H(B)=T/W$ based on $B=\kk\langle V\rangle/(R)$ as discussed above. 

First, we lift the system of twisting functionals $\alpha=\{\alpha_i\}_{i\in \mathbb Z}$ to the free bialgebra $\kk\langle V\rangle$ in the following way. By formulas \eqref{eq:inductivelinear}, we can extend the restrictions $\alpha_i|_V$ and $\alpha^{-1}_i|_V$ on the subcoalgebra $V$ to the free bialgebra $\kk\langle V\rangle$. By abuse of notation, we still write them as  $\alpha=\{\alpha_i\}_{i\in \mathbb Z}$ and $\alpha^{-1}=\{\alpha^{-1}_i\}_{i\in \mathbb Z}$. It is routine to check that $\alpha$ is a system of twisting functionals on $\kk\langle V\rangle$ with convolution inverse $\alpha^{-1}$. Moreover, $\alpha_i(R)=\alpha^{-1}_i(R)=0$ which factor through $B=\kk \langle V\rangle/(R)$ giving back the original system of twisting functionals on $B$.

For simplicity, we write $V^{(k)}=S^k(V)$ in $T=\kk\langle \oplus_{k \ge 0} V^{(k)}\rangle$. We now extend $\alpha$ and $\alpha^{-1}$ from $\kk\langle V\rangle$ to $T$ by \eqref{eq:sigmaS}. Again, it is straightforward to check that $\alpha$ is a system of twisting functionals on $T$, with convolution inverse $\alpha^{-1}$,  extending that on $\kk\langle V\rangle$. By \Cref{twistL-deg-1}, it remains to show that  $\alpha(W)=0$, which would then yield a system of twisting functionals on $\mathcal H(B)=T/W$ extending that on $B$ via the natural bialgebra map $B\to \mathcal H(B)$. We will show that $\alpha$ and $\alpha^{-1}$ vanish on
\begin{gather*}
S^{\ell_1}(V)\otimes \cdots \otimes S^{\ell_p}(V)\otimes S^k(R)\otimes S^{\ell_{p+1}}(V)\otimes \cdots \otimes S^{\ell_{p+q}}(V),\\
S^{\ell_1}(V)\otimes \cdots \otimes S^{\ell_p}(V)\otimes (M \circ (\id \otimes S) \circ \Delta-u \circ \vps)(S^k(V))\otimes S^{\ell_{p+1}}(V)\otimes \cdots \otimes S^{\ell_{p+q}}(V), \\
S^{\ell_1}(V)\otimes \cdots \otimes S^{\ell_p}(V)\otimes (M \circ (S\otimes \id) \circ \Delta-u \circ \vps)(S^k(V))\otimes S^{\ell_{p+1}}(V)\otimes \cdots \otimes S^{\ell_{p+q}}(V),
\end{gather*}
by induction on $p+q$.

{\bf Case 1:} Assume $p+q=0$. By \eqref{eq:sigmaS}, we have for any homogeneous element $r\in R$:
\[
\alpha_i(S^k(r))=
\begin{cases}
\alpha_i(r) & \text{$k$ is even}\\
(\alpha_{-|r|}*\alpha^{-1}_{i-|r|})(r) & \text{$k$ is odd},\\
\end{cases}
\]
where $|r|$ denotes the degree of $r$.
Since $\alpha_i(R)=\alpha^{-1}_i(R)=0$ and $\Delta(R)\subseteq \kk\langle V\rangle \otimes (R)+(R)\otimes \kk \langle V\rangle$, one can check that $\alpha_i(S^k(R))=0$. A similar argument yields $\alpha^{-1}_i(S^k(R))=0$. Take any homogeneous element $a\in V$ of degree $j$, we have for $k$ even, 
\begin{align*}
\alpha_i \left((M \circ (\id \otimes S) \circ \Delta-u \circ \vps)(S^k(a)\right) &=\alpha_i \left (\sum S^k(a_1)S^{k+1}(a_2)-\vps(a) \right) \\ 
&=\sum \alpha_i(S^k(a_1))(\alpha_{i+j}*\alpha^{-1}_{j})(S^{k+1}(a_2))-\vps(a) \\ 
&=\sum \alpha_i(a_1)(\alpha_{j-j}*\alpha^{-1}_{i+j-j})(a_2)-\vps(a) \\ 
&=\sum \alpha_i(a_1)\alpha^{-1}_i(a_2)-\vps(a) =0,
\end{align*}
and for $k$ odd, 
\begin{align*}
\alpha_i\left((M \circ (\id \otimes S) \circ \Delta-u \circ \vps)(S^k(a)\right)&=\alpha_i\left (\sum S^{k}(a_2)S^{i+1}(a_1)-\vps(a) \right)\\
&=\sum \alpha_i(S^{k}(a_2))(\alpha_{i-j}*\alpha^{-1}_{-j})(S^{i+1}(a_1))-\vps(a)\\
&=\sum (\alpha_{-j}*\alpha^{-1}_{i-j})(a_2)(\alpha_{i-j}*\alpha^{-1}_{-j})(a_1)-\vps(a)\\
&=\sum \alpha_{-j}(a_2)\alpha^{-1}_{j}(a_1)-\vps(a) =0.
\end{align*}
Similarly, we can show that $\alpha_i\left((M \circ (S \otimes \id) \circ \Delta-u \circ \vps)(S^k(V)\right)=0$ for $k\ge 0$ and also for $\alpha^{-1}_i$. This completes the $p+q=0$ case. 

{\bf Case 2:} Suppose $p+q>0$. Set 
\[I_{p,q}=S^{\ell_1}(V)\otimes \cdots \otimes S^{\ell_p}(V)\otimes (M \circ (\id \otimes S) \circ \Delta-u \circ \vps)(S^k(V))\otimes S^{\ell_{p+1}}(V)\otimes \cdots \otimes S^{\ell_{p+q}}(V).\] 
We first claim that $I_{p,q}$ is a co-ideal in $T$, that is, $\Delta(I_{p,q})\subseteq T\otimes I_{p,q}+I_{p,q}\otimes T$. If $k$ is even, we have 
\begin{align*}
  \Delta(  (M \circ (\id \otimes S) \circ \Delta-u \circ \vps)S^k(a))=\sum S^k(a_1)S^{k+1}(a_3)\otimes (M \circ (\id \otimes S) \circ \Delta-u \circ \vps)S^k(a_2)\subseteq T\otimes I_{0,0}.
\end{align*}
Then it is direct to check that $\Delta(I_{p,q})\subseteq T\otimes T_{p,q}$. The argument for $k$ is odd is the same. This proves our claim. Now let $p>0$. For any $a\in I_{p,q}$, without loss of generality, we can write $a=bc$ for some $b\in S^{\ell_1}(V), c\in I_{p-1,q}$. So, we can apply Lemma \ref{Esigma}(2) to obtain that 
$\alpha_i(bc)=\sum \alpha_i(b)\alpha_{i+|b|}(c_1)\alpha_{|b|}^{-1}(c_2)=0$ since either $c_1$ or $c_2\in I_{p-1,q}$. The case for $q>0$ and $\alpha_i^{-1}$ can be argued analogously. Hence $\alpha_i(I_{p,q})=\alpha^{-1}_i(I_{p,q})=0$.
By the same argument, we can show for
\[ J_{p,q}=S^{\ell_1}(V)\otimes \cdots \otimes S^{\ell_p}(V)\otimes (M \circ (S\otimes \id) \circ \Delta-u \circ \vps)(S^k(V))\otimes S^{\ell_{p+1}}(V)\otimes \cdots \otimes S^{\ell_{p+q}}(V).\] 
This concludes the inductive step. Finally, the uniqueness of the extension of $\alpha$ from $B$ to $\mathcal H(B)$ follows from \Cref{lem:inverseSLinear}. 
\end{proof}

%%%%%%%%%%%%%%%%%%%%%%%%%%%%%%%%%%%
%%%%%%%%%%%%%%%%%%%%%%%%%%%%%%%%%%%
\section{2-cocycles via twisting system pairs }
\label{sect:2coc}

Throughout this section, let $B$ be a bialgebra satisfying the twisting conditions given in \Cref{defn:twisting conditions}. In this section, we introduce the notion of a twisting system pair of $B$, which we lift to that of its Hopf envelope $\mathcal H(B)$ and we use it to construct a certain 2-cocycle explicitly.  

\begin{lemma}\label{LTS}
Let $B$ be a bialgebra satisfying the twisting conditions. 
Consider a collection of linear functionals $\alpha=\{\alpha_i: i\in \mathbb Z\}$ with convolution inverse $\{\alpha_i^{-1}: i\in \mathbb Z\}$ on $B$. The following are equivalent: 
\begin{itemize}
    \item[(1)] The collection of maps $\alpha$ is a system of twisting functionals on $B$.
    \item[(2)] The collection of maps $\tau=\{\tau_i: i\in \mathbb Z\}$ with $\tau_i=\Xi^r[\alpha_i]$ is a twisting system of $B$. In this case, the inverse twisting system $\tau^{-1}$ is given by $\tau^{-1}_i=\Xi^r[\alpha_i^{-1}]$.
    \item[(3)] The collection of maps $\tau=\{\tau_i: i\in \mathbb Z\}$ with $\tau_i=\Xi^l[\alpha_i^{-1}]$  is a twisting system of $B$. In this case, the inverse twisting system $\tau^{-1}$ is given by $\tau^{-1}_i=\Xi^l[\alpha_i]$.
       \end{itemize}
\end{lemma}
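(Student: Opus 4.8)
The plan is to prove the three equivalences by relating the defining identity of a system of twisting functionals (Lemma~\ref{Esigma}(1)) to the twisting system axiom (1), translated through the winding maps $\Xi^r$ and $\Xi^l$. Since $B$ satisfies the twisting conditions, $\Delta(B_n) \subseteq B_n \otimes B_n$, so each $\Xi^r[\alpha_i]$ and $\Xi^l[\alpha_i^{-1}]$ is a graded linear endomorphism of $B$; and since $\alpha_i$ is convolution invertible, these maps are bijective with inverses $\Xi^r[\alpha_i^{-1}]$ and $\Xi^l[\alpha_i]$ respectively (as recalled in the text). The normalizations are immediate: $\alpha_i(1)=1$ and $\alpha_0 = \vps$ translate respectively to $\tau_i(1) = 1$ and $\tau_0 = \id$, matching conditions (5) and (6) for twisting systems. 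So the whole content is the correspondence between the multiplicativity-type axiom for $\alpha$ and axiom (1) for $\tau$.

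First I would prove (1)$\Leftrightarrow$(2). Take $\tau_i = \Xi^r[\alpha_i]$, so $\tau_i(b) = \sum b_1 \alpha_i(b_2)$. For homogeneous $a \in B_j$ and arbitrary homogeneous $b$, I compute both sides of the twisting system axiom $\tau_i(a\tau_j(b)) = \tau_i(a)\tau_{i+j}(b)$ using coassociativity and the fact that $\Delta$ is an algebra map. The left side expands as
\[
\tau_i(a \tau_j(b)) = \sum a_1 b_1 \, \alpha_i(a_2 b_2) \, \alpha_j(b_3),
\]
while the right side is
\[
\tau_i(a)\tau_{i+j}(b) = \sum a_1 b_1 \, \alpha_i(a_2) \, \alpha_{i+j}(b_2).
\]
Because $a_1 b_1$ ranges over a spanning set and, crucially, the grading lets one isolate the degree-$j$ component, these two are equal for all homogeneous $a, b$ precisely when $\sum \alpha_i(a_2 b_2)\alpha_j(b_3) = \alpha_i(a_2)\alpha_{i+j}(b_2)$ holds after pairing, i.e. precisely when Lemma~\ref{Esigma}(1) holds (with $a_2$ in place of $a$; but $a_2$ ranges over all of $B_j$ as $a$ does). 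Running the implication in both directions — one direction substitutes the $\alpha$-identity to collapse the sum, the other extracts the identity by a suitable evaluation (apply $\vps$ to the first tensor factor after writing $a$ as a generic homogeneous element) — gives (1)$\Leftrightarrow$(2), and the formula $\tau_i^{-1} = \Xi^r[\alpha_i^{-1}]$ is exactly the recalled inverse of a winding map.

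For (1)$\Leftrightarrow$(3), I would argue similarly with $\tau_i = \Xi^l[\alpha_i^{-1}]$, so $\tau_i(b) = \sum \alpha_i^{-1}(b_1) b_2$, and check that the twisting system axiom now becomes equivalent to Lemma~\ref{Esigma}(4), $\sum \alpha_i^{-1}(ab_1)\alpha_{i+j}(b_2) = \alpha_i^{-1}(a)\alpha_j(b)$, which by Lemma~\ref{Esigma} is equivalent to condition (1) for $\alpha$. The left winding map attaches functionals on the \emph{left}, so the expansion of $\tau_i(a\tau_j(b))$ pulls $\alpha_i^{-1}$ onto the product $a_1(\,\cdot\,)_1$ and $\alpha_j^{-1}$ onto $b_1$, and matching against $\tau_i(a)\tau_{i+j}(b) = \sum \alpha_i^{-1}(a_1)a_2 \alpha_{i+j}^{-1}(b_1) b_2$ (after reindexing) yields the (4)-form of the axiom. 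Alternatively, one can derive (3) from (2) by observing that $\Xi^l[\alpha_i^{-1}]$ is conjugate/adjoint to $\Xi^r[\alpha_i]$ in an appropriate sense, but the direct computation is cleanest.

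The main obstacle is purely bookkeeping: keeping the Sweedler indices straight through the two applications of coassociativity and the algebra-map property of $\Delta$, and being careful that the "extraction" direction genuinely recovers the $\alpha$-identity for \emph{every} homogeneous element of every degree $j$ — this is where the grading hypothesis (\textbf{T1}), (\textbf{T2}) is used, since it guarantees that $\Delta(a) \in B_j \otimes B_j$ so the degree-$j$ label on $\tau_j$, $\tau_{i+j}$ is consistent and one may freely pick $a$ of any chosen degree $j$. No single step is deep; the care lies in verifying that the equivalences hold as stated for homogeneous arguments and that the inverse-twisting-system formulas are the ones recalled from the winding-map discussion.
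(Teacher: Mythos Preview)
Your proposal is correct and follows essentially the same route as the paper: for (1)$\Leftrightarrow$(2) you expand $\tau_i(a\tau_j(b))$ and $\tau_i(a)\tau_{i+j}(b)$ via the winding-map formula and the algebra-map property of $\Delta$, then substitute the $\alpha$-identity in one direction and apply $\vps$ in the other to recover it---exactly as the paper does---while (1)$\Leftrightarrow$(3) is handled analogously (the paper simply says ``in a similar way''). One small correction: the identity you extract in the left-winding case is more directly the form of Lemma~\ref{Esigma}(3) rather than (4), but since all four conditions there are equivalent this does not affect the argument.
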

\begin{proof}
(1)$\Rightarrow$(2): It is clear that for any $i \in \mathbb Z$,  $\tau_i(a)=\Xi^r[\alpha_i](a)=\sum a_1\alpha_i(a_2)$ is a graded linear automorphism of $B$ with inverse $\tau_i^{-1}(a)=\Xi^r[\alpha^{-1}_i](a)=\sum a_1 \alpha^{-1}_i(a_2)$, for any homogeneous $a\in B$ of degree $j$. Furthermore, we can compute that for $b \in B$ of any degree:
\begin{align*}
\tau_i(a\tau_j(b))=\tau_i\left (a \left (\sum b_1\alpha_j(b_2)\right ) \right )=\sum a_1b_1\alpha_i(a_2b_2)\alpha_j(b_3)=\sum a_1b_1\alpha_i(a_2)\alpha_{i+j}(b_2)=\tau_i(a)\tau_{i+j}(b).
\end{align*}
Moreover, we have $\tau_i(1)=1\alpha_i(1)=1$ and $\tau_0(a)=\sum a_1\alpha_0(a_2)=\sum a_1\vps(a_2)=a$. So $\tau=\{\tau_i:i\in \mathbb Z\}$ is a twisting system of $B$.

(2)$\Rightarrow$(1): Suppose $\tau=\{\tau_i: i\in \mathbb Z\}$ is a twisting system of $B$. Then, we can compute that 
\begin{align*}
\sum \alpha_i(ab_1)\alpha_j(b_2)&=\vps\left(\sum a_1b_1\alpha_i(a_2b_2)\alpha_j(b_3)\right) =\vps\left(\tau_i \left (a \left (\sum b_1\alpha_j(b_2)\right ) \right )\right)=\vps\left(\tau_i(a\tau_j(b))\right)\\
&=\vps\left(\tau_i(a)\tau_{i+j}(b)\right) =\vps\left(\sum a_1\alpha_i(a_2)b_1\alpha_{i+j}(b_2)\right) =\alpha_i(a)\alpha_{i+j}(b).
\end{align*}
Also we have $\alpha_i(1)=1\alpha_i(1)=\tau_i(1)=1$ and $\alpha_0(a)=\vps(a_1\alpha_0(a_2))=\vps(\tau_0(a))=\vps(a)$. Moreover, let $\beta_i=\vps \circ
\tau_i^{-1}$. Since $\Delta \circ \tau_i=(\id \otimes \tau_i)\circ \Delta$, one has $\Delta \circ \tau_i^{-1}=(\id \otimes \tau_i^{-1}) \circ \Delta$. Then one can check that $\tau_i^{-1}=\Xi^r[\beta_i]$. Hence $\tau_i\circ \tau_i^{-1}=\Xi^r[\alpha_i*\beta_i]=\id _B$ and $\tau_i^{-1}\circ \tau_i=\Xi^r[\beta_i*\alpha_i]=\id_B$, and so $\alpha_i*\beta_i=\beta_i*\alpha_i=u \circ \vps$ and $\beta_i=\alpha^{-1}_i$. Thus, $\alpha=\{\alpha_i\}_{i\in \mathbb Z}$ is a system of twisting functionals of on $B$.

(1)$\Leftrightarrow$(3): This can be proved in a similar way.
\end{proof}

\begin{defn}
[Twisting system pair]
\label{defn:twisting pair}
Let $(B,M,u,\Delta,\vps)$ be a bialgebra satisfying the twisting conditions. A pair $(\tau,\mu)$ of twisting systems of $B$ is said to be a \emph{twisting system pair} if for all $i \in \mathbb Z$:
\begin{enumerate}
\item[(\textbf{P1})] $\Delta\circ \tau_i=(\id \otimes \tau_i)\circ \Delta$ and $\Delta\circ \mu_i=(\mu_i\otimes \id)\circ \Delta$, and  
 \item[(\textbf{P2})] $\vps \circ (\tau_i \circ \mu_i)=\vps$.
\end{enumerate} 
\end{defn}

Using an argument similar to \cite[Lemma 2.1.2]{HNUVVW21}, we show in \Cref{lem:winding} that for any twisting system pair $(\tau,\mu)$ of a bialgebra $B$, $\tau$ and $\mu$ are uniquely determined by each other as winding linear maps. 

\begin{lemma}
\label{lem:winding}
Let $B$ be a bialgebra satisfying the twisting conditions. For any twisting system pair $(\tau,\mu)$ of $B$, we have a system of twisting functionals $\alpha=\{\alpha_i:i\in \mathbb Z\}$ on $B$ such that $\tau=\{\tau_i=\Xi^r(\alpha_i):i\in \mathbb Z\}$ and $\mu=\{\mu_i=\Xi^l(\alpha_i^{-1}): i\in \mathbb Z\}$. Moreover, for any $i,j \in \mathbb Z$, we have the following properties:
\begin{enumerate}
    \item[(\textbf{P3})] $\tau_i\circ \mu_j=\mu_j\circ \tau_i$, and
    \item[(\textbf{P4})] $(\tau_i\otimes \mu_i)\circ \Delta=\Delta$.
\end{enumerate}
\end{lemma}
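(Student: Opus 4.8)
The plan is to leverage \Cref{LTS} together with the defining conditions (\textbf{P1}) and (\textbf{P2}) of a twisting system pair. Since $(\tau,\mu)$ is a twisting system pair, condition (\textbf{P1}) says in particular that $\Delta\circ\tau_i=(\id\otimes\tau_i)\circ\Delta$, which is precisely the hypothesis that makes \Cref{LTS}(2) applicable: it produces a system of twisting functionals $\alpha=\{\alpha_i:i\in\mathbb Z\}$ on $B$ with $\tau_i=\Xi^r[\alpha_i]$ and $\tau_i^{-1}=\Xi^r[\alpha_i^{-1}]$. Dually, condition (\textbf{P1}) also says $\Delta\circ\mu_i=(\mu_i\otimes\id)\circ\Delta$, which is the left-handed analogue; by the ``left'' version of \Cref{LTS} (the equivalence (1)$\Leftrightarrow$(3), suitably transposed), $\mu$ corresponds to a system of twisting functionals $\gamma=\{\gamma_i\}$ with $\mu_i=\Xi^l[\gamma_i^{-1}]$, or more directly one observes via the same argument as in the (2)$\Rightarrow$(1) step that setting $\gamma_i=\vps\circ\mu_i$ recovers the functionals so that $\mu_i=\Xi^l[\gamma_i^{-1}]$.

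The heart of the matter is then to show $\gamma_i=\alpha_i$, i.e.\ that the two systems of twisting functionals extracted from $\tau$ and from $\mu$ coincide. This is exactly where condition (\textbf{P2}) enters: $\vps\circ(\tau_i\circ\mu_i)=\vps$. Expanding, for any $b\in B$ we have $\vps(\tau_i\mu_i(b))=\vps(\sum \mu_i(b)_1\,\alpha_i(\mu_i(b)_2))$. Using $\Delta\circ\mu_i=(\mu_i\otimes\id)\circ\Delta$ from (\textbf{P1}), this becomes $\sum\vps(\mu_i(b_1))\alpha_i(b_2)=\sum\gamma_i(b_1)\alpha_i(b_2)=(\gamma_i\ast\alpha_i)(b)$, where I used $\vps\circ\mu_i=\gamma_i$; but wait—I should be careful to line up so that the convolution produces $\gamma_i^{-1}\ast\alpha_i$ instead, i.e.\ first rewrite $\mu_i=\Xi^l[\gamma_i^{-1}]$ so $\mu_i(b)=\sum\gamma_i^{-1}(b_1)b_2$, then $\tau_i\mu_i(b)=\sum b_1\,\gamma_i^{-1}(b_2)\alpha_i(b_3)$ and applying $\vps$ gives $(\gamma_i^{-1}\ast\alpha_i)(b)$. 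Condition (\textbf{P2}) forces $\gamma_i^{-1}\ast\alpha_i=u\circ\vps$, hence $\alpha_i=\gamma_i$ by uniqueness of convolution inverses. This yields the desired single system $\alpha$ with $\tau_i=\Xi^r(\alpha_i)$ and $\mu_i=\Xi^l(\alpha_i^{-1})$.

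With $\tau$ and $\mu$ both expressed through the single system $\alpha$, properties (\textbf{P3}) and (\textbf{P4}) become direct computations. For (\textbf{P3}), $\tau_i\mu_j(b)=\sum (\mu_j(b))_1\,\alpha_i((\mu_j(b))_2)$; using coassociativity and $\Delta\circ\mu_j=(\mu_j\otimes\id)\circ\Delta$ we get $\sum \mu_j(b_1)\,\alpha_i(b_2)$, while $\mu_j\tau_i(b)=\sum \gamma_j^{-1}((\tau_i(b))_1)(\tau_i(b))_2 = \sum \alpha_j^{-1}(b_1) b_2\,\alpha_i(b_3)$; after expanding $\mu_j(b_1)=\sum \alpha_j^{-1}(b_1)b_2$ in the first expression one sees both equal $\sum\alpha_j^{-1}(b_1)\,b_2\,\alpha_i(b_3)$. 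For (\textbf{P4}), $(\tau_i\otimes\mu_i)\Delta(b)=\sum \tau_i(b_1)\otimes\mu_i(b_2)=\sum b_1\alpha_i(b_2)\otimes\alpha_i^{-1}(b_3)b_4=\sum b_1\otimes (\alpha_i\ast\alpha_i^{-1})(b_2)\,b_3\cdot(\text{reindex})$—more precisely, regrouping via coassociativity collapses $\alpha_i(b_2)\alpha_i^{-1}(b_3)$ into $\vps(b_2)$ and one is left with $\sum b_1\otimes b_2=\Delta(b)$. The main obstacle I anticipate is purely bookkeeping: getting the Sweedler indices and the order of convolution factors ($\gamma_i$ versus $\gamma_i^{-1}$, left versus right winding) exactly right so that (\textbf{P2}) delivers $\alpha_i=\gamma_i$ rather than some twisted variant; once the identification is pinned down, (\textbf{P3}) and (\textbf{P4}) are routine applications of coassociativity and the counit axiom.
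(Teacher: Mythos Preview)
Your approach is essentially identical to the paper's: define $\alpha_i=\vps\circ\tau_i$, use (\textbf{P1}) to verify $\tau_i=\Xi^r[\alpha_i]$ and invoke \Cref{LTS} to conclude $\alpha$ is a system of twisting functionals, then use (\textbf{P1}) for $\mu$ to get $\mu_i=\Xi^l[\vps\circ\mu_i]$ and apply (\textbf{P2}) to compute $(\vps\circ\mu_i)\ast\alpha_i=\vps$, forcing $\vps\circ\mu_i=\alpha_i^{-1}$; the verifications of (\textbf{P3}) and (\textbf{P4}) are then exactly the Sweedler computations you wrote. The only wobble is your notational inconsistency---if $\gamma_i:=\vps\circ\mu_i$ then $\mu_i=\Xi^l[\gamma_i]$, not $\Xi^l[\gamma_i^{-1}]$, so (\textbf{P2}) gives $\gamma_i\ast\alpha_i=\vps$ and hence $\gamma_i=\alpha_i^{-1}$ (which is precisely what you want, $\mu_i=\Xi^l[\alpha_i^{-1}]$)---but you already flagged this bookkeeping as the anticipated obstacle.
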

\begin{proof}
 Let $\alpha_i=\vps \circ \tau_i$ and $\alpha^{-1}_i=\vps \circ \tau_i^{-1}$. Then we have
\[
\tau_i(a)=\sum \tau_i(a)_1\vps(\tau_i(a)_2) \overset{(\textbf{P1})} = \sum a_1 \vps(\tau_i(a_2))=\Xi^r[\vps \circ \tau_i](a)=\Xi^r[\alpha_i](a).
\] 
Hence $\tau_i=\Xi^r[\alpha_i]$ and by Lemma \ref{LTS}, $\alpha=\{\alpha_i: i\in \mathbb Z\}$ is a system of 
twisting functionals on $B$. Since $\tau^{-1}$ satisfies (\textbf{P1}), we have $\tau_i^{-1}=\Xi^r[\alpha^{-1}_i]$. A straightforward computation shows that $\tau_i\circ \tau_i^{-1}=\Xi^r[\alpha_i*\alpha^{-1}_i]=\id_B$ and $\tau_i^{-1}\circ \tau_i=\Xi^r[\alpha^{-1}_i*\alpha_i]=\id_B$. This 
implies that $\alpha_i$ and $\alpha^{-1}_i$ are convolution inverse of each other. Similarly, we can show that $\mu_i=\Xi^l[\vps \circ \mu_i]$. Condition (\textbf{P2}) implies that $\vps = \vps \circ (\tau_i\circ \mu_i)=(\vps \circ \mu_i) *\alpha_i$. Hence we have $\vps \circ \mu_i=\alpha^{-1}_i$ and $\mu_i=\Xi^l[\alpha^{-1}_i]$. 
Finally, for any $i,j \in \mathbb Z$, condition (\textbf{P3}) holds since
\begin{align*}
(\tau_i\circ \mu_j)(a)&=\tau_i\left(\Xi^l[\alpha^{-1}_j](a)\right) =\Xi^r[\alpha_i]\left(\sum \alpha^{-1}_j(a_1)a_2\right) =\sum \alpha^{-1}_j(a_1)a_2\alpha_i(a_3)\\
&=\Xi^l[\alpha^{-1}_j]\left(\sum a_1\alpha_i(a_2)\right) =\Xi^l[\alpha^{-1}_j]\left(\Xi^r[\alpha_i](a)\right) =(\mu_j\circ \tau_i)(a),
\end{align*}
and condition (\textbf{P4}) holds since
\[
(\tau_i\otimes \mu_i)\Delta(a) =\sum \Xi^r[\alpha_i](a_1)\otimes \Xi^l[\alpha^{-1}_i(a_2)] =\sum a_1\alpha_i(a_2)\otimes \alpha^{-1}_i(a_3)a_4 =\sum a_1\otimes a_2 =\Delta(a).
\]
\end{proof}

\begin{Cor}
\label{twist-pair-envelope-ext}
   Let $B$ be a bialgebra satisfying the twisting conditions. Then any twisting system pair of a bialgebra $B$ can be extended uniquely to a twisting system pair of its Hopf envelope $\mathcal H(B)$. Moreover, any twisting system pair of $\mathcal H(B)$ is obtained from some twisting system pair of $B$ in such a way.
\end{Cor}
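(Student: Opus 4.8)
\textbf{Proof plan for Corollary~\ref{twist-pair-envelope-ext}.}
The plan is to reduce the statement about twisting system pairs to the already-established statement about systems of twisting functionals in \Cref{LiftLTS}, using the correspondence of \Cref{lem:winding}. First I would start with a twisting system pair $(\tau,\mu)$ of $B$. By \Cref{lem:winding} there is a unique system of twisting functionals $\alpha=\{\alpha_i\}_{i\in\mathbb Z}$ on $B$ with $\tau_i=\Xi^r[\alpha_i]$ and $\mu_i=\Xi^l[\alpha_i^{-1}]$ for all $i$. By \Cref{LiftLTS}, $\alpha$ extends uniquely to a system of twisting functionals $\widehat\alpha$ on $\mathcal H(B)$. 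Since $\mathcal H(B)$ satisfies the twisting conditions (as recalled in the paragraph preceding the explicit construction of $\mathcal H(B)$), I can then set $\widehat\tau_i:=\Xi^r[\widehat\alpha_i]$ and $\widehat\mu_i:=\Xi^l[\widehat\alpha_i^{-1}]$ on $\mathcal H(B)$; by \Cref{LTS} these are twisting systems of $\mathcal H(B)$, and they restrict to $\tau$ and $\mu$ on $B$ because $\widehat\alpha$ restricts to $\alpha$ and the winding-map construction is natural with respect to the bialgebra inclusion $\iota_B\colon B\to\mathcal H(B)$.

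Next I would verify that $(\widehat\tau,\widehat\mu)$ is in fact a \emph{twisting system pair}, i.e.\ that it satisfies (\textbf{P1}) and (\textbf{P2}). Condition (\textbf{P1}) is automatic: for any linear functional $\pi$ on a bialgebra, $\Delta\circ\Xi^r[\pi]=(\id\otimes\Xi^r[\pi])\circ\Delta$ and $\Delta\circ\Xi^l[\pi]=(\Xi^l[\pi]\otimes\id)\circ\Delta$ by coassociativity, so $\widehat\tau_i=\Xi^r[\widehat\alpha_i]$ and $\widehat\mu_i=\Xi^l[\widehat\alpha_i^{-1}]$ satisfy the two intertwining identities in (\textbf{P1}). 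Condition (\textbf{P2}) follows from the computation $\vps\circ(\widehat\tau_i\circ\widehat\mu_i)=\vps\circ\Xi^r[\widehat\alpha_i]\circ\Xi^l[\widehat\alpha_i^{-1}]=(\widehat\alpha_i^{-1})*\widehat\alpha_i\circ(\text{applied appropriately})$; more precisely, unwinding the definitions gives $\vps(\widehat\tau_i\widehat\mu_i(a))=\sum\widehat\alpha_i^{-1}(a_1)\widehat\alpha_i(a_2)=\vps(a)$ since $\widehat\alpha_i$ and $\widehat\alpha_i^{-1}$ are convolution inverses. Hence $(\widehat\tau,\widehat\mu)$ is a twisting system pair of $\mathcal H(B)$ extending $(\tau,\mu)$.

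For uniqueness and surjectivity, I would invoke the corresponding clauses of \Cref{LiftLTS} together with \Cref{lem:winding}. Given any twisting system pair $(\sigma,\nu)$ of $\mathcal H(B)$, \Cref{lem:winding} produces a system of twisting functionals $\beta$ on $\mathcal H(B)$ with $\sigma_i=\Xi^r[\beta_i]$, $\nu_i=\Xi^l[\beta_i^{-1}]$; by the second assertion of \Cref{LiftLTS}, $\beta$ is the unique extension of its restriction $\beta|_B$, which is a system of twisting functionals on $B$, and $\beta|_B$ yields a twisting system pair of $B$ via \Cref{lem:winding} whose extension is $(\sigma,\nu)$. This shows every twisting system pair of $\mathcal H(B)$ arises this way. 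Uniqueness of the extension of $(\tau,\mu)$ likewise follows: any extension has the form $(\Xi^r[\gamma_i],\Xi^l[\gamma_i^{-1}])$ for a system of twisting functionals $\gamma$ on $\mathcal H(B)$ restricting to $\alpha$, and by the uniqueness in \Cref{LiftLTS} we have $\gamma=\widehat\alpha$, so the pair is forced.

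The main obstacle I anticipate is purely bookkeeping rather than conceptual: one must check carefully that the passage ``pair $\leftrightarrow$ functional system'' via $\Xi^r,\Xi^l$ is genuinely bijective and compatible with restriction along $\iota_B$, i.e.\ that no twisting system pair of $\mathcal H(B)$ fails to come from a \emph{single} functional system (this is exactly what \Cref{lem:winding} guarantees, so it must be cited precisely), and that the winding maps built from the extended functionals really do restrict to the original $\tau,\mu$ on the image of $B$. Once these naturality points are in place, the result is a formal consequence of \Cref{LiftLTS}, \Cref{LTS}, and \Cref{lem:winding}.
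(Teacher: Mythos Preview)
Your proposal is correct and takes essentially the same approach as the paper: the paper's proof is the single sentence ``This is a direct consequence of Lemma~\ref{lem:winding} and \Cref{LiftLTS},'' and your plan is precisely the unpacking of that sentence via the correspondence $(\tau,\mu)\leftrightarrow\alpha$ and the extension result for twisting functionals. Your explicit verifications of (\textbf{P1}) and (\textbf{P2}) and the uniqueness/surjectivity argument are exactly the details one would supply if asked to expand the paper's one-line proof.
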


\begin{proof}
This is a direct consequence of Lemma \ref{lem:winding} and \Cref{LiftLTS}.
\end{proof}

Now, we consider any Hopf algebra $H$ satisfying the twisting conditions. A \emph{right 2-cocycle} on $H$ is a convolution invertible linear map $\sigma: H\otimes H\to \kk$ satisfying
\begin{equation}\label{eq:2cocycle}
\sum \sigma(x_1y_1, z)\sigma(x_2, y_2)=\sum \sigma(x, y_1z_1)\sigma(y_2, z_2)\qquad \text{and}\qquad  \sigma(x,1)=\sigma(1,x)=\vps(x),
\end{equation} 
for all $x,y,z\in H$. The convolution inverse of $\sigma$, denoted by $\sigma^{-1}$, is a \emph{left 2-cocycle} on $H$. Given a right 2-cocycle $\sigma$, let $H^\sigma$ denote the coalgebra $H$ endowed with the original unit and deformed product
\[
x \cdot_\sigma y:=\sum \sigma^{-1}(x_1,y_1)\,x_2y_2\,\sigma(x_3,y_3),
\]
for any $x,y\in H$. In fact, $H^\sigma$ is a Hopf algebra with the deformed antipode $S^\sigma$ given in \cite[Theorem 1.6]{Doi93}. We call $H^\sigma$ the \emph{2-cocycle twist} of $H$ by $\sigma$. There is a monoidal equivalence 
\[
  F: \comod(H)~\overset{\otimes}{\cong}~ \comod(H^\sigma) \quad \text{ sending } \quad 
  U~\mapsto~F(U)=:U_\sigma.
\]
 We write $\otimes$ and $\otimes_\sigma$ for the tensor products in the corresponding right comodule categories. As a functor, $F$ is the identity functor since $H = H^\sigma$ as coalgebras. As a monoidal equivalence, $F$ is equipped with natural isomorphisms of $H^\sigma$-comodules: 
\begin{align*} 
\xi_{U,V}: F(U \otimes V )~&\xrightarrow{\sim}~ F(U) \otimes_\sigma F(V)\\
u\otimes v~&\mapsto~ \sum \sigma^{-1}(u_1,v_1)\,u_0\otimes v_0,\notag
\end{align*} 
compatible with the associativity, where the right coaction of $H$ on $U$ is given by $\rho: u \mapsto \sum u_0 \otimes u_1 \in U \otimes H$. In particular, $F$ sends a (connected graded) $H$-comodule algebra $A$ to the twisted (connected graded) $H^\sigma$-comodule algebra $F(A)=A_\sigma=A$ as vector spaces, with 2-cocycle twist multiplication $a \cdot_\sigma b=\sum a_0b_0\sigma(a_1,b_1)$, for any $a,b\in A$.

\begin{proposition}
\label{2-cocycle twist}
Let $H$ be a Hopf algebra satisfying the twisting conditions, and $(\tau,\mu)$ be a twisting system pair of $H$. Then $\tau\circ \mu=\{\tau_i\circ \mu_i: i\in \mathbb Z\}$ is a twisting system of $H$. Moreover, $H^{\tau\circ \mu}\cong H^\sigma$ as graded algebras, where the right 2-cocycle $\sigma: H \otimes H\to \kk$ and its convolution inverse $\sigma^{-1}$ are given by
\[
\sigma(x,y)=\vps(x)\vps(\tau_{|x|}(y)) \qquad \text{ and } \qquad \sigma^{-1}(x,y)=\vps(x)\vps(\mu_{|x|}(y))
\]
for any homogeneous elements $x,y\in H$ where $|x|, |y|$ denote the degrees of $x$ and $y$, respectively.
\end{proposition}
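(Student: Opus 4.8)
The plan is to express everything through the system of twisting functionals $\alpha=\{\alpha_i\}_{i\in\mathbb Z}$ supplied by \Cref{lem:winding}, for which $\tau_i=\Xi^r[\alpha_i]$, $\mu_i=\Xi^l[\alpha_i^{-1}]$, and which satisfies (\textbf{P3}) and (\textbf{P4}). Two facts are used throughout: by (\textbf{T2}) the coproduct components of a homogeneous element $x$ all have degree $|x|$, and $\vps$ is an algebra map which is the unit for the convolution product. There are three things to establish: that $\tau\circ\mu$ is a twisting system, that $\sigma$ (with the stated $\sigma^{-1}$) is a right 2-cocycle, and that the two resulting graded algebras coincide; the one genuinely delicate point is the first.

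Set $\nu_i:=\tau_i\circ\mu_i$. Each $\nu_i$ is a graded linear bijection with inverse $\mu_i^{-1}\circ\tau_i^{-1}$, and $\nu_i(1)=1$, $\nu_0=\id$ are inherited from $\tau$ and $\mu$, so it remains to verify the twisting-system identity $\nu_i(a\,\nu_j(b))=\nu_i(a)\,\nu_{i+j}(b)$ for homogeneous $a$ of degree $j$ and $b$ of arbitrary degree. I would apply the twisting-system identity $\mu_i(a\,c)=\mu_i(a)\,\mu_{i+j}\mu_j^{-1}(c)$ (valid since $\mu$ is a twisting system) to $c=\tau_j\mu_j(b)=\nu_j(b)$ and simplify $\mu_j^{-1}\tau_j\mu_j=\tau_j$ using (\textbf{P3}), obtaining $\mu_i(a\,\nu_j(b))=\mu_i(a)\,\mu_{i+j}\tau_j(b)$; then apply the twisting-system identity $\tau_i(a'\,c')=\tau_i(a')\,\tau_{i+j}\tau_j^{-1}(c')$ with $a'=\mu_i(a)$, again of degree $j$, and $c'=\mu_{i+j}\tau_j(b)$, and simplify $\tau_j^{-1}\mu_{i+j}\tau_j=\mu_{i+j}$ using (\textbf{P3}) once more, to conclude $\nu_i(a\,\nu_j(b))=\tau_i\mu_i(a)\,\tau_{i+j}\mu_{i+j}(b)=\nu_i(a)\,\nu_{i+j}(b)$. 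Thus $\tau\circ\mu$ is a twisting system, and the right Zhang twist $H^{\tau\circ\mu}$ has product $x\cdot_{\tau\circ\mu}y=x\,\nu_{|x|}(y)$.

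Since $\vps(\tau_i(y))=\sum\vps(y_1)\alpha_i(y_2)=\alpha_i(y)$ and likewise $\vps(\mu_i(y))=\alpha_i^{-1}(y)$, the stated formulas read $\sigma(x,y)=\vps(x)\alpha_{|x|}(y)$ and $\sigma^{-1}(x,y)=\vps(x)\alpha_{|x|}^{-1}(y)$. Convolution-invertibility with the asserted inverse is immediate from $(\sigma*\sigma^{-1})(x\otimes y)=\sum\vps(x_1)\vps(x_2)\,\alpha_{|x|}(y_1)\alpha_{|x|}^{-1}(y_2)=\vps(x)\vps(y)$ together with the mirror computation, and $\sigma(x,1)=\vps(x)\alpha_{|x|}(1)=\vps(x)$, $\sigma(1,x)=\vps(1)\alpha_0(x)=\vps(x)$ give the normalization. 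For the cocycle equation \eqref{eq:2cocycle} I would evaluate both sides on homogeneous $x,y,z$: using that $\vps$ is an algebra map together with the counit identities, the left-hand side collapses to $\vps(x)\,\alpha_{|x|}(y)\,\alpha_{|x|+|y|}(z)$; using the twisting-functional identity $\sum\alpha_i(ab_1)\alpha_j(b_2)=\alpha_i(a)\alpha_{i+j}(b)$ with $i=|x|$, $a=y_1$, $b=z$, followed by a counit collapse, the right-hand side reduces to the same expression. Hence $\sigma$ is a right 2-cocycle.

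For the final claim I would substitute the formulas for $\sigma^{\pm1}$ into the deformed multiplication $x\cdot_\sigma y=\sum\sigma^{-1}(x_1,y_1)\,x_2y_2\,\sigma(x_3,y_3)$: the $x$-scalars $\vps(x_1)\vps(x_3)$ collapse the iterated coproduct of $x$ to a single $x$ by the counit identities, while the remaining $y$-part reassembles as $\sum\alpha_{|x|}^{-1}(y_1)\,y_2\,\alpha_{|x|}(y_3)=\Xi^r[\alpha_{|x|}]\big(\Xi^l[\alpha_{|x|}^{-1}](y)\big)=(\tau_{|x|}\circ\mu_{|x|})(y)=\nu_{|x|}(y)$. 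Therefore $x\cdot_\sigma y=x\,\nu_{|x|}(y)=x\cdot_{\tau\circ\mu}y$, so $H^\sigma$ and $H^{\tau\circ\mu}$ are literally the same graded vector space $H$ with the same unit and the same multiplication, and the identity map is the desired graded-algebra isomorphism. As anticipated, the only real obstacle is the careful tracking of the index shifts in the twisting-axiom verification and the timely use of (\textbf{P3}); the collapses in the last two steps are routine once the homogeneity bookkeeping coming from (\textbf{T2}) is in place.
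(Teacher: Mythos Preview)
Your proof is correct and follows essentially the same route as the paper: both reduce everything to the system of twisting functionals $\alpha$ from \Cref{lem:winding}, use (\textbf{P3}) to commute $\tau$'s and $\mu$'s when checking that $\tau\circ\mu$ is a twisting system, verify the cocycle identity via the twisting-functional axiom, and finish by collapsing $x\cdot_\sigma y$ to $x\,\nu_{|x|}(y)$. The only cosmetic difference is that you verify twisting axiom (1) for $\nu$ directly whereas the paper verifies the equivalent axiom (2).
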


\begin{proof}
We first show that $\tau\circ \mu$ is a twisting system. It is clear that $\nu:=\{\nu_i=\tau_i\circ \mu_i: i\in \mathbb Z\}$ is a set of graded linear automorphisms with inverse $\nu^{-1}:=\{\nu_i^{-1}=\mu_i^{-1}\circ \tau_i^{-1}: i\in \mathbb Z\}$ on $H$.
By \Cref{lem:winding}, we have 
\begin{align}
\label{eq:twistingpair}
\tau_i=\Xi^r(\alpha_i), \qquad \tau_i^{-1}=\Xi^r(\alpha^{-1}_i), \qquad \mu_i=\Xi^l(\alpha^{-1}_i), \qquad \mu_i^{-1}=\Xi^l(\alpha_i),
\end{align}
for the system of twisting functionals $\alpha:=\{\alpha_i=\vps \circ \tau_i: i\in \mathbb Z\}$ on $H$. Let $x, y$ and $z$ be homogeneous elements in $H$. For any $i \in \mathbb Z$, $\nu$ is a twisting system of $H$ since 
\begin{align*}
 \nu_i(xy)&=\tau_i\circ\mu_i(xy)
 =\left(\tau_i\circ\mu_i(x)\right)\, \left(\tau_{i+|x|}\circ\tau_{|x|}^{-1}\circ \mu_{i+|x|}\circ\mu_{|x|}^{-1}(y)\right)\\
 \overset{(\textbf{P3})}&{=}\left(\tau_i\circ\mu_i(x)\right)\left(\tau_{i+|x|}\circ\mu_{i+|x|}\circ \mu_{|x|}^{-1}\circ \tau_{|x|}^{-1}(y)\right) 
 =\left(\nu_i(x)\right)\left(\nu_{i+|x|}\circ \nu_{|x|}^{-1}(y)\right).
\end{align*}
We show next that $\sigma$ satisfies \eqref{eq:2cocycle}: 
\begin{align*}
\sigma(x_1y_1,z)\sigma(x_2,y_2) &=\sum \vps(x_1y_1)\alpha_{|x|+|y|}(z)\vps(x_2)\alpha_{|x|}(y_2) 
=\sum \vps(x)\alpha_{|x|}(y)\alpha_{|x|+|y|}(z)\\
&=\sum \vps(x)\alpha_{|x|}(yz_1)\alpha_{|y|}(z_2) 
=\sum \vps(x)\alpha_{|x|}(y_1z_1)\vps(y_2)\alpha_{|y|}(z_2) 
=\sum \sigma(x,y_1z_1)\sigma(y_2,z_2),
\end{align*} where the third equality follows from \Cref{Esigma}(1) and
\(
\sigma(x,1)=\vps(x)\alpha_{|x|}(1)=\vps(x)=\vps(1)\alpha_0(x)=\sigma(1,x).
\) 
Note that it is straightforward to check that $\sigma$ is convolution invertible with inverse $\sigma^{-1}(x,y)=\vps(x)\alpha^{-1}_{|x|}(y)=\vps(x)\vps(\mu_{|x|}(y))$. Thus, $\sigma$ is a right 2-cocycle on $H$.  

We now show that $H^{\tau\circ \mu}\cong H^\sigma$ as graded algebras via the identity map on vector spaces. By \eqref{eq:twistingpair} and \Cref{lem:winding}, we indeed have
\begin{align*}
x \cdot_\sigma y &=\sum\sigma^{-1}(x_1, y_1)x_2y_2\sigma(x_3, y_3)
=\sum \vps(x_1)\alpha^{-1}_{|x|}(y_1)x_2y_2\vps(x_3)\alpha_{|x|}(y_3) \\
&=\sum x \alpha^{-1}_{|x|}(y_1)y_2\alpha_{|x|}(y_3)
=x\mu_{|x|}\tau_{|x|}(y)
=x \nu_{|x|}(y) = x \cdot_\nu y.
\end{align*}
Since $H^\sigma$ is a Hopf algebra, it implies that $H^{\tau\circ \mu}$ also has a Hopf algebra structure via the above identity isomorphism $\id: H^{\tau\circ \mu}\cong H^\sigma$. 
\end{proof}

\begin{proposition}
    \label{twist-envelope}
    Let $B$ be a bialgebra satisfying the twisting conditions, $(\tau, \mu)$ be a twisting system pair of $B$, and $(\mc{H}(\tau), \mc{H}(\mu))$ be the induced twisting system pair of $\mc{H}(B)$ via \Cref{twist-pair-envelope-ext}. Then $\mc{H}(B^{\tau \circ \mu}) \cong \mc{H}(B)^{\mc{H}(\tau) \circ \mc{H}(\mu)}$ as Hopf algebras. 
\end{proposition}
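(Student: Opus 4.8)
The plan is to reduce the statement to the fact that forming the Hopf envelope commutes with 2-cocycle twisting, provided the cocycle extends to the Hopf envelope, and then to prove that commutation by the universal property of the Hopf envelope, run in both directions. First I would pass from twisting system pairs to 2-cocycles. By \Cref{lem:winding}, write $\tau_i=\Xi^r(\alpha_i)$ and $\mu_i=\Xi^l(\alpha_i^{-1})$ for a system of twisting functionals $\alpha=\{\alpha_i\}$ on $B$, and set $\sigma(x,y)=\varepsilon(x)\alpha_{|x|}(y)$. The proof of \Cref{2-cocycle twist} shows, using only that $B$ is a bialgebra satisfying the twisting conditions (the antipode enters there solely to conclude that the result is Hopf), that $\sigma$ is a right 2-cocycle on $B$ and that $B^{\tau\circ\mu}=B^{\sigma}$ as graded algebras; since the coalgebra is unchanged and $\sigma$ is grading-compatible, $B^{\tau\circ\mu}=B^{\sigma}$ is again a bialgebra satisfying the twisting conditions, so $\mathcal H(B^{\tau\circ\mu})$ is defined. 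Let $\widehat\alpha$ be the unique extension of $\alpha$ to a system of twisting functionals on $\mathcal H(B)$ given by \Cref{LiftLTS}; by \Cref{twist-pair-envelope-ext} the associated twisting pair on $\mathcal H(B)$ is $(\mathcal H(\tau),\mathcal H(\mu))$, so \Cref{2-cocycle twist} applied to the Hopf algebra $\mathcal H(B)$ gives $\mathcal H(B)^{\mathcal H(\tau)\circ\mathcal H(\mu)}\cong\mathcal H(B)^{\widehat\sigma}$ as graded Hopf algebras, where $\widehat\sigma(x,y)=\varepsilon(x)\widehat\alpha_{|x|}(y)$ restricts to $\sigma$ along $\iota_B$. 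Thus it suffices to prove $\mathcal H(B^{\sigma})\cong\mathcal H(B)^{\widehat\sigma}$ as Hopf algebras.

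Next I would construct the two comparison maps. Since $\widehat\sigma$ restricts to $\sigma$ and $\iota_B$ is a coalgebra map, $\iota_B$ intertwines the deformed multiplications, so $\iota_B\colon B^{\sigma}\to\mathcal H(B)^{\widehat\sigma}$ is a bialgebra map into a Hopf algebra, and the universal property of $\mathcal H(B^{\sigma})$ yields a unique Hopf map $\Phi\colon\mathcal H(B^{\sigma})\to\mathcal H(B)^{\widehat\sigma}$ with $\Phi\circ\iota_{B^{\sigma}}=\iota_B$. For the reverse map I would run the same argument after untwisting. By invertibility of 2-cocycle twisting, the convolution inverse $\sigma^{-1}=\varepsilon\otimes\alpha^{-1}$ is a right 2-cocycle on $B^{\sigma}$ with $(B^{\sigma})^{\sigma^{-1}}=B$, and unpacking the cocycle identity shows $\alpha^{-1}=\{\alpha_i^{-1}\}$ is a system of twisting functionals on $B^{\sigma}$. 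By \Cref{LiftLTS} it extends to $\mathcal H(B^{\sigma})$, and \Cref{2-cocycle twist} produces a Hopf algebra $\mathcal H(B^{\sigma})^{\widehat{\sigma^{-1}}}$ whose cocycle $\widehat{\sigma^{-1}}$ restricts to $\sigma^{-1}$ on $B^{\sigma}$; then $\iota_{B^{\sigma}}\colon B=(B^{\sigma})^{\sigma^{-1}}\to\mathcal H(B^{\sigma})^{\widehat{\sigma^{-1}}}$ is a bialgebra map into a Hopf algebra, so the universal property of $\mathcal H(B)$ gives a Hopf map $\Theta\colon\mathcal H(B)\to\mathcal H(B^{\sigma})^{\widehat{\sigma^{-1}}}$ with $\Theta\circ\iota_B=\iota_{B^{\sigma}}$. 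Untwisting the target (its cocycle $(\widehat{\sigma^{-1}})^{-1}$ is again of winding type, of the form $\varepsilon\otimes(-)$) and pulling it back along $\Theta$ converts $\Theta$ into a Hopf map $\Psi\colon\mathcal H(B)^{\kappa}\to\mathcal H(B^{\sigma})$ for a winding-type right 2-cocycle $\kappa$ on $\mathcal H(B)$ with $\kappa|_B=\sigma$. Since both $\kappa$ and $\widehat\sigma$ are of the form $\varepsilon\otimes(-)$ for systems of twisting functionals on $\mathcal H(B)$ restricting to $\alpha$ on $B$, the uniqueness in \Cref{LiftLTS} forces $\kappa=\widehat\sigma$; hence $\Psi\colon\mathcal H(B)^{\widehat\sigma}\to\mathcal H(B^{\sigma})$ with $\Psi\circ\iota_B=\iota_{B^{\sigma}}$.

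Finally I would check that $\Phi$ and $\Psi$ are mutually inverse. The composite $\Psi\circ\Phi$ is a Hopf endomorphism of $\mathcal H(B^{\sigma})$ with $\Psi\circ\Phi\circ\iota_{B^{\sigma}}=\iota_{B^{\sigma}}$, hence it equals the identity by the uniqueness clause in the universal property of $\mathcal H(B^{\sigma})$; and $\Phi\circ\Psi$ is a Hopf endomorphism of $\mathcal H(B)^{\widehat\sigma}$ fixing $\iota_B(B)$ pointwise, hence the identity because $\iota_B(B)$ generates $\mathcal H(B)$ as a Hopf algebra (immediate from the presentation $\mathcal H(B)=T/W$ with $T=\kk\langle\oplus_{k\ge 0}V^{(k)}\rangle$, $V^{(k)}=S^k(V)$) while 2-cocycle twisting leaves the underlying coalgebra, and hence the lattice of Hopf subalgebras, unchanged. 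This yields the desired isomorphism $\mathcal H(B^{\tau\circ\mu})\cong\mathcal H(B)^{\mathcal H(\tau)\circ\mathcal H(\mu)}$. I expect the main obstacle to be the reverse direction: one must establish that 2-cocycle twisting of a graded bialgebra satisfying the twisting conditions is invertible with inverse cocycle again of winding type — so that all of the lifting machinery developed above applies verbatim on $B^{\sigma}$ and $\mathcal H(B^{\sigma})$ — and then correctly identify the pulled-back cocycle $\kappa$ with $\widehat\sigma$ via uniqueness of the lift; keeping track of the conventions for convolution-inverse and \emph{reverse} 2-cocycles is where the technical care concentrates.
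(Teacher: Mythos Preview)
Your proof is correct and follows the same underlying strategy as the paper---construct comparison maps in both directions via the universal property of the Hopf envelope, then check they are mutually inverse---but you take a longer route. The paper works entirely at the level of twisting system pairs: it observes that $(\tau^{-1},\mu^{-1})$ is a twisting system pair of $B^{\tau\circ\mu}$ with $(B^{\tau\circ\mu})^{\tau^{-1}\circ\mu^{-1}}\cong B$, and that $\iota_B$, viewed as a linear map, is automatically a bialgebra map $B^{\tau\circ\mu}\to\mathcal H(B)^{\mathcal H(\tau)\circ\mathcal H(\mu)}$ simply because $\mathcal H(\tau),\mathcal H(\mu)$ extend $\tau,\mu$ along $\iota_B$. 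This immediately yields the forward map $g$; the reverse map comes from applying the universal property of $\mathcal H(B)$ to $(\iota_{B^{\tau\circ\mu}})^{\tau^{-1}\circ\mu^{-1}}$, and then twisting by $\tau\circ\mu$ converts it into the required inverse. No cocycles appear at all. Your version first translates everything into the cocycle language via \Cref{2-cocycle twist}, then must pull back the untwisting cocycle $(\widehat{\sigma^{-1}})^{-1}$ along $\Theta$, verify that the result $\kappa$ is again of winding type, and finally invoke the uniqueness clause of \Cref{LiftLTS} to identify $\kappa$ with $\widehat\sigma$. These steps are all valid (the winding form survives pullback along a graded bialgebra map, and the associated functionals do form a system of twisting functionals, as one checks from the cocycle identity), but they are extra work that the paper's direct argument avoids. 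What your approach buys is a proof of the more general statement ``Hopf envelope commutes with 2-cocycle twist whenever the cocycle lifts''; what the paper's approach buys is brevity and no need to extend \Cref{2-cocycle twist} to bialgebras or to track cocycle pullbacks.
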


\begin{proof} 
Denote by $\tau^{-1}$ and $\mu^{-1}$ the inverse twisting systems of $\tau$ and $\mu$, respectively. Since $\tau^{-1}$ and $\mu^{-1}$ are twisting systems of $B^\tau$ and $B^\mu$ respectively, one can directly check that $(\tau^{-1},\mu^{-1})$ is the twisting system pair of $B^{\tau \circ \mu}$ such that $B\cong (B^{\tau \circ \mu})^{\tau^{-1}\circ \mu^{-1}}$ as bialgebras. Similarly, we write $(\mc{H}(\tau)^{-1},\mc{H}(\mu)^{-1})=(\mc{H}(\tau^{-1}),\mc{H}(\mu^{-1}))$ as the unique extension of the twisting system pair $(\tau^{-1},\mu^{-1})$ from $B^{\tau \circ \mu}$ to $\mc{H}(B^{\tau \circ \mu})$. 

We denote by $\iota_B: B\to \mc{H}(B)$ and $\iota_{B^{\tau \circ \mu}}: B^{\tau \circ \mu}\to \mc{H}(B^{\tau \circ \mu})$ the corresponding bialgebra maps from bialgebras to their Hopf envelopes satisfying the required universal property. 

By the universal property of the Hopf envelope, one has a unique Hopf algebra map $g: \mc{H}(B^{\tau \circ \mu})\to \mc{H}(B)^{\mc{H}(\tau)\circ\mc{H}(\mu)}$ where the following diagram commutes:
\[
\xymatrix{
B^{\tau \circ \mu}\ar[r]^-{\iota_{B^{\tau \circ \mu}}}\ar[rd]_-{(\iota_B)^{\tau\circ \mu}}& \mc{H}(B^{\tau \circ \mu})\ar[d]^-{g}\\
& \mc{H}(B)^{\mc{H}(\tau)\circ\mc{H}(\mu)}.
}
\]
Similarly, one has a unique Hopf algebra map $h: \mc{H}(B)^{\mc{H}(\tau)\circ\mc{H}(\mu)}\to \mc{H}(B^{\tau \circ \mu})$ making the diagram 
\[
\xymatrix{
B\ar[r]^-{\iota_B}\ar[rd]_-{(\iota_{B^{\tau\circ \mu}})^{\mc{H}(\tau)^{-1}\circ \mc{H}(\mu)^{-1}} \qquad} &\mc{H}(B)\ar[d]^-{h}\\
& \mc{H}(B^{\tau \circ \mu})^{\mc{H}(\tau)^{-1}\circ \mc{H}(\mu)^{-1}}
}
\]
commute. By letting $l=h^{\tau\circ \mu}$, we have the following commutative diagram: 
\[
\xymatrix{
B^{\tau\circ \mu}\ar[r]^-{(\iota_B)^{\tau\circ \mu}}\ar[rd]_-{(\iota_{B^{\tau\circ \mu}})} &\mc{H}(B)^{\mc{H}(\tau)\circ\mc{H}(\mu)}\ar[d]^-{l}\\
& \mc{H}(B^{\tau \circ \mu}).
}
\]
By the universal property of $\iota_B$ and $\iota_{B^{\tau\circ \mu}}$ again, one can show that $g\circ l$ and $l\circ g$ are identities on $\mc{H}(B)^{\mc{H}(\tau)\circ\mc{H}(\mu)}$ and $\mc{H}(B^{\tau \circ \mu})
$, respectively. This completes our proof.  
\end{proof}

%%%%%%%%%%%%%%%%%%%%%%%%%%%%%%%%%%%
%%%%%%%%%%%%%%%%%%%%%%%%%%%%%%%%%%%
\section{Proof of \Cref{atau-qsequiv}}
\label{sect:qsequiv}

Throughout this section, let $A$ be an $m$-homogeneous algebra and $A^!$ be its Koszul dual. Let $\tau$ be a twisting system of $A$ and $\tau^!$ be the dual twisting system of $A^!$, defined in \Cref{sect:genresults}. In the following results, we find a twisting pair of $A \bullet A^! \cong \uend^r(A)$ and lift it to give a Hopf algebra isomorphism between the universal quantum algebra of the Zhang twist $A^\tau$ and the 2-cocycle twist of the universal quantum algebra of $A$ (see \Cref{univ-a-twist}). We then prove our main result, \Cref{atau-qsequiv}, which states that if two $m$-homogeneous algebras are graded Morita equivalent then they are quantum-symmetrically equivalent.

\begin{lemma}
    If $A$ is an $m$-homogeneous algebra with twisting system $\tau$, then $\tau \bullet \id$ and $\id \bullet \tau^!$ (defined in \Cref{sect:genresults}) form a twisting system pair of $A \bullet A^! \cong \uend^r(A)$. Moreover, we have the  commutative diagrams:
\begin{align}\label{lem:cend}
\xymatrix{
A\ar[r]^-{\rho}\ar[d]_-{\tau} & A\otimes \underline{\rm end}^r(A)\ar[d]^-{\id \otimes (\tau\bullet \id)}\\
A\ar[r]^-{\rho} & A\otimes \underline{\rm end}^r(A)
}
\qquad \text{and} \qquad
\xymatrix{
A^!\ar[r]^-{\rho^!}\ar[d]_-{\tau^!} &  \underline{\rm end}^r(A)\otimes A^!\ar[d]^-{(\id\bullet \tau^!)\otimes \id}\\
A^!\ar[r]^-{\rho^!} &  \underline{\rm end}^r(A)\otimes A^!.
}
\end{align} 
\end{lemma}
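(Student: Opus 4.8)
The plan is to verify the two axioms (\textbf{P1}) and (\textbf{P2}) of \Cref{defn:twisting pair} for the pair $(\tau\bullet\id,\ \id\bullet\tau^!)$, and then to establish the commutative diagrams in \eqref{lem:cend} by the same template.

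First I would record that $\uend^r(A)\cong A\bullet A^!$ satisfies the twisting conditions of \Cref{defn:twisting conditions}: it is connected graded as an algebra, and since the matrix comultiplication $\Delta(z_j^k)=\sum_i z_i^k\otimes z_j^i$ keeps every generator $z_j^k$ in degree one, $\Delta$ preserves the grading. Applying \Cref{bullet-twist} to $(A,A^!)$ with the twisting systems $(\tau,\id)$ and then with $(\id,\tau^!)$ shows that both $\tau\bullet\id$ and $\id\bullet\tau^!$ are twisting systems of $A\bullet A^!$, acting on the degree-one space $A_1\otimes A_1^*$ as $\tau_i\otimes\id$ and $\id\otimes\tau^!_i$ respectively.

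For (\textbf{P1}), I would check $\Delta\circ(\tau\bullet\id)_i=(\id\otimes(\tau\bullet\id)_i)\circ\Delta$ and $\Delta\circ(\id\bullet\tau^!)_i=((\id\bullet\tau^!)_i\otimes\id)\circ\Delta$ first on the generators $z_j^k$, where each reduces to a one-line computation using the formula for $\Delta(z_j^k)$ and the matrix of $\tau_i$ (respectively of $\tau^!_i=(\tau_i^{-1})^*$). To pass from the generators to all of $A\bullet A^!$, I would run an induction on degree exactly as in the proof of \Cref{twist-deg-1}: since $\Delta$ is a grading-preserving algebra map while $(\tau\bullet\id)_i$ and $(\id\bullet\tau^!)_i$ satisfy the twisting system axioms, writing a degree-$n$ element as a product of a degree-one element and a degree-$(n-1)$ element makes both sides factor in the same way, and the inductive hypothesis (for all indices) closes the argument.

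The key step is (\textbf{P2}): $\vps\circ\big((\tau\bullet\id)_i\circ(\id\bullet\tau^!)_i\big)=\vps$. Rather than inducting here, I would use the shuffle description from the proof of \Cref{bullet-twist}, namely $(\tau\bullet\id)_i(\Sh(a\otimes b))=\Sh(\tau_i(a)\otimes b)$ and $(\id\bullet\tau^!)_i(\Sh(a\otimes b))=\Sh(a\otimes\tau^!_i(b))$ for $a\in A_1^{\otimes n}$ and $b\in(A_1^*)^{\otimes n}$, together with the fact that $\vps(\Sh(a\otimes b))$ equals the natural pairing $\langle b,a\rangle$ (because $\vps$ is the algebra map with $\vps(z_j^k)=\langle x^k,x_j\rangle$). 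Composing the two maps and applying $\vps$ then gives $\langle\tau^!_i(b),\tau_i(a)\rangle$, which equals $\langle b,a\rangle$ by the compatibility \eqref{eq:tauwelldefined} established in \Cref{prop:twistingtau!}; since the elements $\Sh(a\otimes b)$ span each graded piece of $A\bullet A^!$, this proves (\textbf{P2}). Finally, the two commutative diagrams in \eqref{lem:cend} follow by the same two-step method as (\textbf{P1}): one checks them on degree one using the matrix forms $\rho(x_j)=\sum_k x_k\otimes z_j^k$ and $\rho^!(x^k)=\sum_l z_l^k\otimes x^l$ of the universal coactions, and then extends by induction on degree, using that $\rho$ and $\rho^!$ are grading-preserving algebra maps. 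The only real difficulty throughout is bookkeeping---tracking the index $i$, the degree shifts forced by the twisting axioms, and the transpose-and-inverse entering the definition of $\tau^!$---rather than anything conceptual.
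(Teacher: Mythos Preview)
Your proposal is correct and largely parallel to the paper's proof: both invoke \Cref{bullet-twist} to produce the twisting systems, both verify (\textbf{P1}) and the diagrams \eqref{lem:cend} by checking degree one via the matrix formulas and then inducting on degree using the twisting-system axioms and the multiplicativity of $\Delta$, $\rho$, $\rho^!$.

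The one genuine difference is your treatment of (\textbf{P2}). The paper runs the same induction-on-degree template, writing $\vps(\mu_i\xi_i(ab))=\vps(\mu_i\xi_i(a))\,\vps(\mu_{i+j}\mu_j^{-1}\xi_{i+j}\xi_j^{-1}(b))$ and then commuting $\mu_j^{-1}$ past $\xi_{i+j}$ (a commutativity that itself requires a separate inductive check from degree one). Your argument instead identifies $\vps\circ\Sh$ with the natural pairing and reduces (\textbf{P2}) to the duality relation $\langle\tau_i^!(b),\tau_i(a)\rangle=\langle b,a\rangle$ from \eqref{eq:tauwelldefined}. This is more conceptual and avoids having to separately establish that $\mu$ and $\xi$ commute; the paper's route is more uniform with the rest of the proof but hides that extra commutativity step. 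Either approach is fine.
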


\begin{proof}
    We know that both $\mu:=\tau \bullet \id$ and $\xi:=\id \bullet \tau^!$ are twisting systems of $\underline{\rm end}^r(A)=A \bullet A^!$, by \Cref{prop:twistingtau!} and \Cref{bullet-twist}. Suppose $\{x_1,..., x_n\}$ is a basis of $A_1$; denote the dual basis of $A_1^!$ by $\{x^1,..., x^n\}$. Recall that the coaction of $A \bullet A^!$ on $A$ sends
    \[
    \rho: x_j \mapsto \sum x_k \otimes z_j^k,
    \]
    where $z_j^k$ is the image of $x_j \otimes x^k$ in $A \bullet A^!$. Since each linear automorphism $\tau_i$ preserves degrees, we have some invertible scalar matrix $(\lambda^i_{jl})_{1\leq j,l\leq n}$ with inverse $(\phi^i_{jl})_{1\leq j,l\leq n}$ such that
   \begin{align}\label{eq:pairE}
   \tau_i : x_j \mapsto \sum \lambda^i_{jl} x_l, \qquad  \mu_i(z^k_j)=\sum_{1\leq l\leq n} \lambda^i_{jl}z_l^k, \qquad \text{and} \qquad   \xi_i(z^k_j)=\sum_{1\leq l\leq n} z_j^l\phi^i_{lk}.
    \end{align}
We show that (\textbf{P1}) and (\textbf{P2}) hold for $\mu$ and $\xi$ by induction on the degrees in $A\bullet A^!$. It is trivial for degree $0$ and straightforward for degree $1$ due to \eqref{eq:pairE}. Suppose (\textbf{P1}) and (\textbf{P2}) hold for all degrees $\leq n$. Take any homogeneous elements $a,b$ in $A\bullet A^!$ with $a$ of degree $j$ and $b$ of degree $n+1-j$. Then one can check that 
\begin{align*}
\Delta \circ \mu_i(ab) &= \Delta \circ (\mu_i(a)\mu_{i+j}\mu_j^{-1}(b)) = (\Delta \circ \mu_i)(a) (\Delta \circ \mu_{i+j}\mu_j^{-1}) (b) \\
&= (\id \otimes \mu_i) \circ \Delta(a) (\id \otimes \mu_{i+j}\mu_j^{-1}) \circ \Delta(b) = \sum a_1b_1 \otimes \mu_i(a_2)\mu_{i+j}\mu_j^{-1}(b_2) \\
&= \sum a_1b_1 \otimes \mu_i(a_2b_2) = (\id \otimes \mu_i) \circ \Delta(ab).
\end{align*}
So (\textbf{P1}) holds for $\mu$ and similarly for $\xi$. Now for (\textbf{P2}), we have
\begin{align*}
\vps \circ (\mu_i\circ \xi_i)(ab)&=\vps\mu_i\xi_i(a)\vps (\mu_{i+j}\mu_j^{-1}\xi_{i+j}\xi^{-1}_j)(b) =\vps\mu_i\xi_i(a)\vps (\mu_{i+j}\xi_{i+j}\mu_j^{-1}\xi^{-1}_j)(b) \\
&=\vps(a)\vps(\mu_j^{-1}\xi^{-1}_j)(b)
=\vps(a)\vps(b) =\vps(ab).
\end{align*}
Hence $(\mu,\xi)$ is a twisting system pair of $\underline{\rm end}^r(A)$.
  
For the diagrams in \eqref{lem:cend}, we will show the first diagram is commutative. A similar argument can be applied to show the second diagram is commutative. One can check that 
\begin{align*}
    (\id_A \otimes \mu_i) \rho (x_j)=  (\id_A \otimes \mu_i) \left (\sum x_k \otimes z_j^k \right )= \sum_{k,l} x_k  \otimes \lambda_{jl}^i z_l^k=\rho\left ( \sum_{l} \lambda_{jl}^i x_l\right )=  \rho \tau_i(x_j).
\end{align*}
Note that by a similar argument, we also have $(\id_A \otimes \mu_i^{-1}) \rho = \rho \tau_i^{-1}$.
Now by an inductive argument, we prove that the diagram commutes in degree $n$, supposing that for any degree $n-1$ element $a$, we have
\[
    \rho \tau_i (a) =(\id_A \otimes \mu_i) \rho\quad \text{and}\quad   \rho \tau_i^{-1} (a) =(\id_A \otimes \mu_i^{-1}) \rho.
\]
Of course, it is enough to check on degree $n$ elements of the form $x a$, where $x \in A_1$ and $a \in A_{n-1}$, since we are assuming $A$ is generated in degree 1. Now we can check 
\begin{align*}
    \rho \tau_i(xa)&= \rho(\tau_i(x) \tau_{i+1} \tau_1^{-1} (a))=\rho \tau_i(x) \rho \tau_{i+1} \tau^{-1}_1(a) \\
    &=(\id_A \otimes\mu_i)\rho(x) (\id_A \otimes \mu_{i+1}) (\id_A \otimes \mu^{-1}_1) \rho(a) =(\id_A \otimes \mu_i) \rho (xa).
\end{align*}
The argument for $\rho \tau_i^{-1}$ is similar. By induction, the diagram commutes in all degrees. 
\end{proof}

Let $\sigma$ be the 2-cocycle of $\uaut^r(A)$ corresponding to the twisting system pair $(\mathcal{H}(\tau \bullet \id), \mathcal{H}(\id \bullet \tau^!))$ in \Cref{twist-pair-envelope-ext} and \Cref{2-cocycle twist}. We know that $\uaut^r(A)^{\mathcal{H}(\tau \bullet \id)\circ \mathcal{H}(\id \bullet \tau)} \cong \uaut^r(A)^\sigma$.
Moreover, by the universal property of the Hopf envelope and \eqref{lem:cend}, the diagrams 
\begin{align}\label{lem:caut}
\xymatrix{
A\ar[r]^-{\rho}\ar[d]_-{\tau} & A\otimes \underline{\rm aut}^r(A)\ar[d]^-{\id \otimes {\mc H}(\tau\bullet \id)}\\
A\ar[r]^-{\rho} & A\otimes \underline{\rm aut}^r(A)}
\qquad \text{and} \qquad 
\xymatrix{
A^!\ar[r]^-{\rho^!}\ar[d]_-{\tau^!} &  \underline{\rm aut}^r(A)\otimes A^!\ar[d]^-{{\mc H}(\id\bullet \tau^!)\otimes \id}\\
A^!\ar[r]^-{\rho^!} &  \underline{\rm aut}^r(A)\otimes A^!
}
\end{align}
commute. We use the next result to prove that quantum-symmetric equivalence is a graded Morita invariant.

\begin{lemma}
\label{univ-a-twist}
Let $A$ be an $m$-homogeneous algebra and $\tau$ be a twisting system of $A$. We have an isomorphism of Hopf algebras $\uaut^r(A^{\tau}) \cong \uaut^r(A)^{\sigma}$, where $\sigma$ is the right 2-cocycle corresponding to the twisting system pair $(\mc{H}(\tau \bullet \id), \mc{H}( \id \bullet \tau^!))$ defined in \Cref{2-cocycle twist}. 
\end{lemma}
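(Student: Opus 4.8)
The plan is to put together the machinery developed in the earlier sections: we have the Zhang twist $A^\tau$ of the $m$-homogeneous algebra $A$, and by Proposition~\ref{prop:tau!} we have $(A^\tau)^! \cong (A^!)^{\tau^!}$. Combining this with Lemma~\ref{lem:ManinM}(1) and Proposition~\ref{bullet-twist}, we get
\[
\uend^r(A^\tau) \;\cong\; A^\tau \bullet (A^\tau)^! \;\cong\; A^\tau \bullet (A^!)^{\tau^!} \;\cong\; (A\bullet A^!)^{(\tau\bullet\id)\circ(\id\bullet\tau^!)} \;\cong\; \uend^r(A)^{(\tau\bullet\id)\circ(\id\bullet\tau^!)},
\]
where the third isomorphism uses that $\tau\bullet\tau^! = (\tau\bullet\id)\circ(\id\bullet\tau^!)$ on the relevant twisting systems (the two bullet factors act on disjoint tensor slots, so the compositions of the associated degree-one maps agree). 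This identifies $\uend^r(A^\tau)$ with a Zhang twist of $\uend^r(A)$ by the twisting system $\nu := (\tau\bullet\id)\circ(\id\bullet\tau^!)$.

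Next I would invoke the preceding unnamed lemma, which says $(\tau\bullet\id,\ \id\bullet\tau^!)$ is a twisting system pair of $\uend^r(A)$. By Proposition~\ref{2-cocycle twist}, the Zhang twist $\uend^r(A)^{\nu}$ is isomorphic as a graded algebra to the $2$-cocycle twist $\uend^r(A)^{\sigma_0}$, where $\sigma_0$ is the right $2$-cocycle on $\uend^r(A)$ built from the pair; moreover this isomorphism is via the identity on the underlying coalgebra. Then I would pass to Hopf envelopes: Corollary~\ref{twist-pair-envelope-ext} lifts the twisting system pair uniquely to $(\mathcal H(\tau\bullet\id),\mathcal H(\id\bullet\tau^!))$ on $\mathcal H(\uend^r(A)) = \uaut^r(A)$, and Proposition~\ref{twist-envelope} gives
\[
\uaut^r(A^\tau) \;=\; \mathcal H\bigl(\uend^r(A^\tau)\bigr) \;\cong\; \mathcal H\bigl(\uend^r(A)^{\nu}\bigr) \;\cong\; \mathcal H\bigl(\uend^r(A)\bigr)^{\mathcal H(\tau\bullet\id)\circ\mathcal H(\id\bullet\tau^!)} \;=\; \uaut^r(A)^{\mathcal H(\tau\bullet\id)\circ\mathcal H(\id\bullet\tau^!)}.
\]
Finally, applying Proposition~\ref{2-cocycle twist} once more at the level of the Hopf algebra $\uaut^r(A)$ to the lifted pair, the Zhang twist $\uaut^r(A)^{\mathcal H(\tau\bullet\id)\circ\mathcal H(\id\bullet\tau^!)}$ is a $2$-cocycle twist $\uaut^r(A)^\sigma$ with $\sigma$ the $2$-cocycle associated to $(\mathcal H(\tau\bullet\id),\mathcal H(\id\bullet\tau^!))$; this is precisely the claimed isomorphism $\uaut^r(A^\tau)\cong\uaut^r(A)^\sigma$.

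The step I expect to require the most care is verifying that the chain of isomorphisms is compatible with the Hopf-algebra structures, not just the graded-algebra structures --- in particular that the $2$-cocycle twist at the envelope level (whose antipode is deformed as in \cite[Theorem 1.6]{Doi93}) matches the Hopf envelope of the graded-algebra isomorphism $\uend^r(A^\tau)\cong\uend^r(A)^{\sigma_0}$. This is where Proposition~\ref{twist-envelope} is essential, since it is stated precisely to bridge ``Hopf envelope of a Zhang twist'' and ``Zhang twist of a Hopf envelope''; I would check that the twisting system pair $(\tau^{-1}\bullet\id,\ \id\bullet(\tau^!)^{-1})$ it uses on $\uend^r(A)^\nu$ is indeed the one coming from the identification $\uend^r(A)^\nu \cong \uend^r(A^\tau)$, so that Propositions~\ref{twist-envelope} and \ref{2-cocycle twist} can be chained without ambiguity. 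The commutative diagrams in \eqref{lem:caut} are not strictly needed for the Hopf-algebra isomorphism itself but record the extra fact --- used in the proof of Theorem~\ref{atau-qsequiv} --- that under this isomorphism the comodule algebra $A$ is sent to $A^\tau$.
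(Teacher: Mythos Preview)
Your proposal is correct and follows essentially the same route as the paper: the chain $\uend^r(A^\tau)\cong (A\bullet A^!)^{\tau\bullet\tau^!}$ via Lemma~\ref{lem:ManinM}, Proposition~\ref{prop:tau!}, and Proposition~\ref{bullet-twist}, then passage to Hopf envelopes via Proposition~\ref{twist-envelope}, and finally Proposition~\ref{2-cocycle twist} at the level of $\uaut^r(A)$, is exactly the paper's argument---you even make explicit the identification $\tau\bullet\tau^! = (\tau\bullet\id)\circ(\id\bullet\tau^!)$ that the paper uses tacitly when invoking Proposition~\ref{twist-envelope}. One small point: your intermediate appeal to Proposition~\ref{2-cocycle twist} at the \emph{bialgebra} level (to produce $\sigma_0$ on $\uend^r(A)$) is not quite licensed, since that proposition is stated only for Hopf algebras; but this step is superfluous anyway, as your own final chain bypasses $\sigma_0$ entirely and applies Proposition~\ref{2-cocycle twist} only once, on $\uaut^r(A)$.
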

\begin{proof}
We check that 
\begin{align*}
    \uend^r(A^\tau) &\cong A^\tau \bullet (A^\tau)^! 
    \cong A^\tau \bullet (A^!)^{\tau^!}\cong (A \bullet A^!)^{\tau \bullet \tau^!}\cong \uend^r(A)^{\tau \bullet \tau^!},
\end{align*}
where the second isomorphism follows by \Cref{prop:tau!}, and the third isomorphism follows from \Cref{bullet-twist}. 
Then we can show that
\begin{align*}
    \uaut^r(A^\tau) \cong \mathcal{H}(\uend^r(A^\tau))
    &\cong \mathcal{H}(\uend^r(A)^{\tau \bullet \tau^!})\cong \mathcal{H}(\uend^r(A))^{\mathcal{H}(\tau \bullet \tau^!)}\cong \uaut^r(A)^\sigma,
\end{align*}
where the second isomorphism follows from our above computation, the third isomorphism follows from \Cref{twist-envelope}, and the fourth isomorphism follows from \Cref{2-cocycle twist}.
\end{proof}

\noindent 
{\bf Proof of \Cref{atau-qsequiv}.} 
Let $A$ and $B$ be two $m$-homogeneous algebras that are graded Morita equivalent; we must show that they are quantum-symmetrically equivalent.
 Without loss of generality, by \cite[Theorem 1.2]{Zhang1996}, we can assume $B=A^\tau$ for some twisting system $\tau=\{\tau_i: i\in \mathbb Z\}$ of $A$. By \Cref{univ-a-twist}, 
there exists a right 2-cocycle $\sigma$ on $\uaut^r(A)$ given by the twisting system pair $(\mc{H}(\tau \bullet \id), \mc{H}(\id \bullet \tau^!))$ such that $\uaut^r(A^{\tau}) \cong \uaut^r(A)^{\sigma}$. As a consequence, $\comod(\uaut^r(A))$ and $\comod(\uaut^r(A^{\tau}))$ are monoidally equivalent. Since $A$ is an $\uaut^r(A)$-comodule algebra, we can consider the corresponding $\uaut^r(A)^\sigma$-comodule algebra $A_{\sigma}$. It remains to show that there is an isomorphism $A_{\sigma} \cong A^\tau$ of $\uaut^r(A)^\sigma$-comodule algebras. The following computation concludes the proof: for any homogeneous elements $a,b\in A$,
  \begin{align*}
        a \cdot_{\sigma} b & = \sum a_0 b_0 \sigma(a_{1},b_{1})=\sum a_0 b_0 \varepsilon(a_1) \varepsilon({\mc H}(\tau\bullet \id)_{|a|}(b_1))=a ((\varepsilon \otimes \id) \circ ( \id \otimes {\mc H}(\tau\bullet \id)) \circ \rho )(b)\\
        & =a (\varepsilon \otimes \id)(\rho(\tau_{|a|}(b)))=a \tau_{|a|}(b)=a \cdot_{\tau} b,
    \end{align*}
the fourth equality follows from \eqref{lem:caut}.
\hfill $\square$
\vspace{\baselineskip}

The following is now an immediate consequence of the main results proved in our paper. 

\begin{Cor}
    Let $A$ be any $m$-homogeneous algebra and $H$ a Hopf algebra that right coacts on $A$ by preserving its grading. Then for any right 2-cocycle $\sigma$ on $H$, the following are equivalent.
    \begin{itemize}
\item[(i)] The 2-cocycle twist algebra $A_\sigma$ and $A$ are graded Morita equivalent.
\item[(ii)] There is a twisting system $\tau$ on $A$ such that $A_\sigma \cong A^\tau$ as graded algebras. 
\item[(iii)] There is a 2-cocycle $\sigma'$ on $\uaut^r(A)$ given by some twisting system pair such that $A_\sigma \cong A_{\sigma'}$ as algebras. 
    \end{itemize}
\end{Cor}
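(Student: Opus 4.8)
## Proof proposal for the Corollary

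The plan is to prove the cycle of implications $(i) \Rightarrow (ii) \Rightarrow (iii) \Rightarrow (i)$, drawing on the machinery already established in the paper. For $(i) \Rightarrow (ii)$: by hypothesis $A_\sigma$ is a graded algebra finitely generated in degree one that is graded Morita equivalent to $A$; since $A$ is $m$-homogeneous, so is $A_\sigma$ (one checks the $2$-cocycle twist preserves the degree-one generation and the $m$-homogeneity of relations, using that the coaction preserves grading). Then \cite[Theorem 1.2]{Zhang1996} furnishes a twisting system $\tau$ of $A$ with $A_\sigma \cong A^\tau$ as graded algebras, which is exactly $(ii)$.

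For $(ii) \Rightarrow (iii)$: given the twisting system $\tau$ on $A$, apply \Cref{univ-a-twist} to obtain the right $2$-cocycle $\sigma'$ on $\uaut^r(A)$ corresponding to the twisting system pair $(\mc{H}(\tau \bullet \id), \mc{H}(\id \bullet \tau^!))$, together with the Hopf algebra isomorphism $\uaut^r(A^\tau) \cong \uaut^r(A)^{\sigma'}$. The computation at the end of the proof of \Cref{atau-qsequiv} shows $A_{\sigma'} \cong A^\tau$ as $\uaut^r(A)^{\sigma'}$-comodule algebras; combined with $A_\sigma \cong A^\tau$ from $(ii)$, this gives $A_\sigma \cong A_{\sigma'}$ as algebras, which is $(iii)$ (with $\sigma'$ arising from a twisting system pair, as required).

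For $(iii) \Rightarrow (i)$: suppose $\sigma'$ is a $2$-cocycle on $\uaut^r(A)$ coming from a twisting system pair $(\tau',\mu')$ of $\uaut^r(A)$. By \Cref{twist-pair-envelope-ext}, $(\tau',\mu')$ restricts from (equivalently, is the Hopf-envelope extension of) a twisting system pair of the bialgebra $\uend^r(A) = A \bullet A^!$; restricting the first component of that pair further along the coaction $\rho\colon A \to A \otimes \uend^r(A)$ and using the winding-map description from \Cref{lem:winding} and \Cref{LTS}, one extracts a twisting system $\tau$ on $A$ itself. One then shows $A_{\sigma'} \cong A^\tau$ as graded algebras via the same $(\varepsilon \otimes \id) \circ (\id \otimes \mc H(\tau\bullet\id)) \circ \rho$ computation appearing in the proof of \Cref{atau-qsequiv}, after verifying the analogue of the commuting square \eqref{lem:caut} for the given pair. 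Finally, $A_{\sigma'} \cong A^\tau$ together with $A_\sigma \cong A_{\sigma'}$ and \cite[Theorem 1.2]{Zhang1996} (a Zhang twist is always graded Morita equivalent to the original algebra) yields that $A_\sigma$ and $A$ are graded Morita equivalent, proving $(i)$.

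The main obstacle is the implication $(iii) \Rightarrow (i)$: one must show that every $2$-cocycle on $\uaut^r(A)$ \emph{that arises from a twisting system pair} descends to a genuine twisting system on $A$ compatible with the coaction. The forward direction $(ii) \Rightarrow (iii)$ builds the pair on $\uend^r(A)$ starting from data on $A$, but the reverse requires recognizing that the degree-one restriction of the pair on $A \bullet A^!$, which lives on $A_1 \otimes A_1^*$ and has the block form $\tau_i \otimes \id$ and $\id \otimes \tau_i^!$ as in \eqref{eq:pairE}, genuinely originates from a scalar-matrix family $(\lambda^i_{jl})$ acting on $A_1$ that extends (via the inductive rules \eqref{eq:inductive}) to a twisting system of $A$ preserving $R$. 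This compatibility is precisely what the commuting square \eqref{lem:cend} encodes, so the argument amounts to reversing that lemma; the remaining steps are then routine given the results already proved.
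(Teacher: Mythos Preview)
Your cycle of implications and the references you invoke match the paper's proof for $(i)\Leftrightarrow(ii)$ and $(ii)\Rightarrow(iii)$ essentially verbatim (the paper cites \cite[Lemma~4.1.5]{HNUVVW21} for the fact that $A_\sigma$ is again $m$-homogeneous, which you simply assert parenthetically).

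The difference lies in how you handle $(iii)\Rightarrow(ii)$. You propose to restrict the twisting system pair from $\uaut^r(A)$ down to $\uend^r(A)=A\bullet A^!$, recognize its degree-one block form $\tau_i\otimes\id$ and $\id\otimes\tau_i^!$ on $A_1\otimes A_1^*$, then extend the resulting scalar-matrix family on $A_1$ to all of $A$ via the inductive rules \eqref{eq:inductive} and separately verify $\tau_i(R)=R$ by ``reversing'' the lemma that produced \eqref{lem:cend}. This can be made to work, but the $R$-preservation step is not automatic and would need an argument you have not supplied. The paper bypasses this entirely: from \Cref{lem:winding} one has the system of twisting functionals $\alpha=\{\alpha_i\}$ on $\uaut^r(A)$, and one simply \emph{defines} $\tau_i(a):=\sum a_0\,\alpha_i(a_1)$ directly on $A$ via the coaction. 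This is already a well-defined graded linear automorphism of $A$ (no passage through $\kk\langle A_1\rangle$, no $R$-preservation to check), and the twisting-system axiom $\tau_i(a\tau_j(b))=\tau_i(a)\tau_{i+j}(b)$ follows by the same one-line computation as in \Cref{LTS}. The final identity $a\cdot_{\sigma'}b=\sum a b_0\,\alpha_{|a|}(b_1)=a\tau_{|a|}(b)=a\cdot_\tau b$ is then immediate from the explicit form of $\sigma'$ in \Cref{2-cocycle twist}. In short, your route is correct in outline but more laborious; the paper's use of the twisting functionals to transport $\tau$ to $A$ through the coaction in one stroke is the cleaner move.
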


\begin{proof}
(i)$\Leftrightarrow$(ii) Note that by \cite[Lemma 4.1.5]{HNUVVW21}, $A_\sigma$ is again an $m$-homogeneous algebra. So the equivalence directly follows from \cite[Theorem 1.2]{Zhang1996}.

(ii)$\Rightarrow$(iii) It is derived from the proof of \Cref{atau-qsequiv}, where the twisting system pair is given in \Cref{univ-a-twist}.

(iii)$\Rightarrow$(ii) Without loss of generality, we can assume the 2-cocycle $\sigma$ is given by some twisting system $(f,g)$ on $\uaut^r(A)$. By \Cref{lem:winding}, there is a system of twisting functionals $\{\alpha_i:i\in \mathbb Z\}$ on $\uaut^r(A)$ such that $f_i=\Xi^r[\alpha_i]$ and $g_i=\Xi^l[\alpha_i^{-1}]$. We define a collection of graded linear automorphisms $\tau=\{\tau_i: i\in \mathbb Z\}$ on $A$ via $\tau_i(a)=\sum a_0\alpha_i(a_1)$ with linear inverse $\tau^{-1}_i(a)=\sum a_0\alpha_i^{-1}(a_1)$. Similar to \Cref{LTS}, one can easily check that $\tau$ is a twisting system on $A$. Note the 2-cocycle $\sigma$ on $\uaut^r(A)$ is given by $\sigma(x,y)=\vps(x)\alpha_{|x|}(y)$ for any homogeneous elements $x,y\in \uaut^r(A)$. Therefore, we have
\begin{align*}
a\cdot_\sigma b=\sum a_0b_0\sigma(a_1,b_1)=\sum ab_0\alpha_{|a|}(b_1)=a\tau_{|a|}(b)=a\cdot_\tau b
\end{align*}
for any homogeneous elements $a,b\in A$. This proves the implication.
\end{proof}

\begin{remark}
In \cite{ArtinZhang1994}, Artin and Zhang introduced the concept of a noncommutative projective scheme ${\rm Proj}(A)$, which gives an analogue of the category of quasi-coherent sheaves for the noncommutative projective space associated to $A$. Since ${\rm Proj}(A)$ is a quotient of $\grmod(A)$, and we have proven that $QS(A)$ only depends on $\grmod(A)$, one might ask whether $QS(A)$ is actually an invariant of ${\rm Proj}(A)$. However, we point out that there are connected graded algebras whose noncommutative projective schemes are equivalent but are not quantum-symmetrically equivalent. For example, let $A$ be a polynomial algebra and $B=A^{\langle d\rangle}$ be the Veronese subalgebra, which always shares the same ${\rm Proj}$ with $A$ (see e.g., \cite[Introduction]{MU} for further details on the Veronese subalgebra). By \cite[Lemma 3.2.7]{HNUVVW3}, $A$ and $B$ are not quantum-symmetrically equivalent since $A$ has a finite global dimension, but $B$ does not when $d\ge 2$. 
\end{remark}

We speculate that \Cref{atau-qsequiv} holds in general for any two graded algebras that are finitely generated in degree one, without the $m$-homogeneous assumption. Since \Cref{atau-qsequiv} implies that the tensor category $\comod(\uaut^r(A))$ depends only on $\grmod(A)$ rather than on $A$, we ask the following question.

\begin{ques}
    For a connected graded algebra $A$ that is finitely generated in degree one, is there an intrinsic categorical construction for $\comod(\uaut^r(A))$ purely in terms of $\grmod(A)$?
\end{ques}

%%%%%%%%%%%%%%%%%%%%%%%%%%%%%%%%%%%
%%%%%%%%%%%%%%%%%%%%%%%%%%%%%%%%%%%

\bibliography{bibfile}
\bibliographystyle{amsplain}
    
\end{document}